\newtheorem{teo}{Theorem}[section]
\newtheorem{defi}{Definition}[section]
\newtheorem{cor}{Corollary}[section]
\newtheorem{prop}{Proposition}[section]
\newtheorem{rem} {Remark}[section]
\newcommand{\nn}{\nonumber}
\DeclareMathOperator{\im}{im}
\DeclareMathOperator{\dvol}{dvol}
\DeclareMathOperator{\supp}{supp}
\DeclareMathOperator{\vol}{vol}
\DeclareMathOperator{\reg}{reg}
\DeclareMathOperator{\sing}{sing}
\title{\huge \bf Symplectic manifolds, $L^p$-cohomology and $q$-parabolicity}  
\author{Francesco Bei  \bigskip \\
Dipartimento di matematica, universit\`a degli studi di Padova\\ E-mail addresses: bei@math.unipd.it\ \     francescobei27@gmail.com }
\date{}
\begin{document}

\maketitle
\begin{abstract}
\noindent Let $(M,\omega,J,g)$ be a non-compact almost K\"ahler manifold. In this paper we provide various criteria that assure that $\omega^k$ induces a non trivial class in the reduced $L^p$ maximal/minimal cohomology of $(M,g)$. Furthermore in the last part we explore some topological applications of our results.
\end{abstract}
\vspace{1 cm}

\noindent\textbf{Keywords}: Almost K\"ahler manifolds, symplectic manifolds, $L^p$-cohomology, $q$-parabolicity, K\"ahler spaces.
\vspace{0.5 cm}

\noindent\textbf{Mathematics subject classification}: 53D05, 58J10, 31C12, 32C18.


\section*{Introduction}
Let $(N,\omega)$ be a compact symplectic manifold of dimension $2n$.  Among its basic properties there is the well known fact that $\omega^k$ induces a non trivial class in the de Rham cohomology of $N$, that is  
\begin{equation}
\label{nonsvanisce}
0\neq [\omega^k]\in H^{2k}_{\text{dR}}(N)
\end{equation}
for any $k=1,...,n$. Consider now a non-compact symplectic manifold $(M,\omega)$. Let $J$ be an almost complex structure compatible with $\omega$ and let $g$ be the Riemannian metric induced by $\omega$ and $J$, that is $g(X,Y):=\omega(X,JY)$ for any $X,Y\in \mathfrak{X}(M)$. Usually in the literature a manifold $M$ equipped with three tensors $\omega$, $J$ and $g$ as above  is called an {\em almost K\"ahler manifold}, see e.g. \cite{ToWeYa}. For various reasons, besides the usual de Rham cohomology, in the non-compact setting it is also interesting to consider the $L^p$-de Rham cohomology which, roughly speaking,  is defined as the quotient between $L^p$-closed forms modulo $L^p$-exact forms \footnote{We refer to the first section of this paper for a precise definition.}. Thus, looking at \eqref{nonsvanisce}, it is natural to wonder whether something similar holds true also for the $L^p$-cohomology of a non-compact almost K\"ahler manifold.  Certainly we cannot expect that a generalization of \eqref{nonsvanisce} holds true for an arbitrary non-compact almost K\"ahler manifold  without any further assumption on $(M,\omega,J,g)$ or $p$. First, as $\omega$ is parallel with respect to some Hermitian connection, it is necessary to assume that $\vol_g(M)<\infty$ in order to have $\omega^k\in L^p\Omega^{2k}(M,g)$ and $p\in [1,\infty)$. Secondly there are celebrated vanishing theorems for the $L^2$-cohomology of certain complete K\"ahler manifolds $(Z,h)$ based on the fact that the corresponding K\"ahler form $\omega$ admits a primitive in $L^{\infty}\Omega^1(Z,h)$, that is there exists $\eta \in \Omega^1(Z)\cap L^{\infty}\Omega^1(Z,h)$ such that $d_1\eta=\omega$, see e.g. \cite{Grom} and \cite{ToH}. Thus we can say that the aim of this paper is to provide an answer to the following question:
\begin{itemize}
\item Let $(M,\omega,J,g)$ be a non-compact almost K\"ahler manifold. Under what circumstances does $\omega^k$ induce a non-trivial class in the $L^p$-cohomology of $(M,g)$?
\end{itemize}
We have already mentioned above that we are led  to assume $\vol_g(M)<\infty$. In what follows we will see that another important property is the $q$-parabolicity (and other criteria inspired by the notion of $q$-parabolicity) of $(M,g)$.
Let us now provide some more details by explaining the structure of this paper. The first section is devoted to the background material about $L^p$-cohomology and almost K\"ahler manifolds. The second section is split in two parts. Its first subsection collects various technical propositions that will be needed later.  The second subsection contains the main results of this paper. More precisely it is devoted to various criteria assuring the non-vanishing of $[\omega^k]$ in $\overline{H}^{2k}_{p,\max/\min}(M,g)$, where $\overline{H}^{2k}_{p,\max/\min}(M,g)$ is the reduced $L^p$-maximal/minimal cohomology of $(M,g)$ respectively, see \eqref{gintonic} and \eqref{moscowmule}.
Concerning the non-vanishing of $[\omega^k]$ in $\overline{H}^{2k}_{p,\max}(M,g)$ we prove the following

\begin{teo}
\label{scalamari}
Let $(M,\omega,J,g)$ be a possibly incomplete almost K\"ahler manifold of finite volume and dimension $2m>0$.  Assume that $\overline{H}^{2m}_{p,\max}(M,g)=\overline{H}^{2m}_{p,\min}(M,g)$ for some $1\leq p<\infty$. Then $\omega^k$ induces a non trivial class in $\overline{H}^{2k}_{q,\max}(M,g)$ for any $k=1,2,...,m$ and $q\in [p,\infty]$.
\end{teo}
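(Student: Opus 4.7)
The plan is to reduce the general case, by wedge-product manipulations, to the top-degree statement $[\omega^m]\neq 0$ in $\overline{H}^{2m}_{p,\max}(M,g)$, which I will establish directly from Stokes' theorem using the hypothesis $\overline{H}^{2m}_{p,\max}=\overline{H}^{2m}_{p,\min}$.

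\textbf{Step 1: the top degree.} Since $d\omega=0$, one has $\omega^m=m!\,\dvol_g$ and hence $\int_M\omega^m=m!\,\vol_g(M)>0$. In top degree there are no nonzero forms of higher degree, so $\ker(d_{p,\max})=\ker(d_{p,\min})=L^p\Omega^{2m}(M,g)$, and the assumption $\overline{H}^{2m}_{p,\max}(M,g)=\overline{H}^{2m}_{p,\min}(M,g)$ amounts to the equality $\overline{\im(d_{p,\max})}=\overline{\im(d_{p,\min})}$ inside $L^p\Omega^{2m}(M,g)$. If $[\omega^m]=0$ in $\overline{H}^{2m}_{p,\max}(M,g)$, then $\omega^m$ lies in this common closure, and by definition of $d_{p,\min}$ one can extract a sequence $\gamma_n\in\Omega_c^{2m-1}(M)$ with $d\gamma_n\to\omega^m$ in $L^p$. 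The hypotheses $\vol_g(M)<\infty$ and $p\geq 1$ yield, via H\"older, a continuous embedding $L^p\hookrightarrow L^1$, so the convergence holds also in $L^1$. Stokes' theorem gives $\int_M d\gamma_n=0$ for every $n$, contradicting $\int_M\omega^m>0$; hence $[\omega^m]\neq 0$ in $\overline{H}^{2m}_{p,\max}(M,g)$.

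\textbf{Step 2: general $k$ and $q$.} Fix $k\in\{1,\dots,m\}$ and $q\in[p,\infty]$ and suppose, for a contradiction, that $[\omega^k]=0$ in $\overline{H}^{2k}_{q,\max}(M,g)$. Select $\alpha_n\in\mathrm{dom}(d_{q,\max})$ with $d\alpha_n\to\omega^k$ in $L^q$. Since $\omega^{m-k}$ is smooth, $d$-closed and has constant pointwise $g$-norm, multiplication by $\omega^{m-k}$ preserves the maximal domain and obeys the Leibniz rule in the distributional sense, so $\alpha_n\wedge\omega^{m-k}\in\mathrm{dom}(d_{q,\max})$ and $d(\alpha_n\wedge\omega^{m-k})=d\alpha_n\wedge\omega^{m-k}\to\omega^m$ in $L^q$. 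Finite volume together with $q\geq p$ produce the embedding $L^q\hookrightarrow L^p$, so $\alpha_n\wedge\omega^{m-k}\in\mathrm{dom}(d_{p,\max})$ and $d(\alpha_n\wedge\omega^{m-k})\to\omega^m$ in $L^p$ as well. Hence $[\omega^m]=0$ in $\overline{H}^{2m}_{p,\max}(M,g)$, contradicting Step 1.

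\textbf{Main difficulty.} The delicate point is the Leibniz-type statement used in Step 2, namely that if $\alpha\in\mathrm{dom}(d_{r,\max})$ and $\beta$ is smooth, $d$-closed, and pointwise bounded, then $\alpha\wedge\beta\in\mathrm{dom}(d_{r,\max})$ with $d(\alpha\wedge\beta)=d\alpha\wedge\beta$. I expect this fact, together with the finite-volume embeddings between $L^q$-spaces of forms, to be supplied by the technical propositions collected in the first part of the second section. All remaining ingredients are standard: the definition of $d_{p,\min}$ as the closure of $d$ on $\Omega_c^{\bullet}$, H\"older's inequality on finite-volume manifolds, and Stokes' theorem for compactly supported smooth forms.
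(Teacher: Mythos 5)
Your argument is correct and is essentially the paper's own proof, merely reorganized: you isolate the top-degree non-vanishing (via $\overline{\im(d_{2m-1,\max,p})}=\overline{\im(d_{2m-1,\min,p})}$, extraction of a compactly supported approximating sequence, and Stokes) and then reduce general $k$ and $q$ to it by wedging with $\omega^{m-k}$ and the finite-volume embedding $L^q\hookrightarrow L^p$, exactly the chain of steps the paper carries out in a single contradiction argument using Prop.~\ref{tecprop}, property c), and Prop.~\ref{littletool}. The Leibniz-rule ingredient you flag as the delicate point is precisely what Prop.~\ref{tecprop} supplies, so no new idea is missing.
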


Requiring $(M,g)$ to be $q$-parabolic, see Def. \ref{parax}, we can also deduce  the non-vanishing of  $[\omega^k]$ in $\overline{H}^{2k}_{p,\min}(M,g)$ for certain $p$. More precisely we have 

\begin{teo}
\label{sqpara}
Let $(M,\omega,J,g)$ be a possibly incomplete almost K\"ahler manifold of finite volume and dimension $2m>0$. Assume that $(M,g)$ is $p$-parabolic with $1<p<\infty$ and let $p'=p/(p-1)$. Then:
\begin{enumerate}
\item For any $k=1,2,...,m$, $\omega^k$ induces a non trivial class in    $\overline{H}^{2k}_{q,\max}(M,g)$ for any  $q\in [p',\infty]$.
\item For any $k=1,2,...,m$, $\omega^k$ induces a non trivial class in    $\overline{H}^{2k}_{q,\min}(M,g)$ for any  $q\in [1,p]$.
\end{enumerate}
\end{teo}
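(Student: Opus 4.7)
The strategy I would use is a pairing argument: test $\omega^k$ against $\omega^{m-k}$ and exploit that $\int_M \omega^m = m!\,\vol_g(M) \ne 0$. The role of $p$-parabolicity is to push either $\omega^k$ or $\omega^{m-k}$ into the appropriate \emph{minimal} domain via cutoffs $\chi_n \in C_c^\infty(M)$ with $0 \le \chi_n \le 1$, $\chi_n \to 1$ pointwise, and $\|d\chi_n\|_p \to 0$. Once this is in place, the nonvanishing of $\int_M \omega^m$ will clash with a generalized Stokes (adjoint duality) identity applied to an approximating sequence of the assumed exact-in-the-limit form.

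For part (1), with $q \in [p', \infty]$ and conjugate exponent $q' = q/(q-1) \le p$, the first step is to verify that $\omega^{m-k} \in \mathrm{dom}(d_{q',\min})$ with $d_{q',\min}\omega^{m-k} = 0$. The sequence $\chi_n \omega^{m-k} \in C_c^\infty \Omega^{2m-2k}(M)$ converges to $\omega^{m-k}$ in $L^{q'}$ by dominated convergence, and $\|d(\chi_n \omega^{m-k})\|_{q'} = \|d\chi_n \wedge \omega^{m-k}\|_{q'} \le \|\omega^{m-k}\|_\infty\,\vol_g(M)^{1/q'-1/p}\,\|d\chi_n\|_p \to 0$, using $q' \le p$ together with finite volume. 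Assuming for contradiction that $\omega^k = \lim_n d\alpha_n$ in $L^q$ for some $\alpha_n \in \mathrm{dom}(d_{q,\max})$, I would then invoke the standard adjointness between $d_{q,\max}$ and $d_{q',\min}$, which yields $\int_M d\alpha_n \wedge \omega^{m-k} = \pm \int_M \alpha_n \wedge d_{q',\min}\omega^{m-k} = 0$. H\"older's inequality (with exponents $q,q'$) then gives $0 = \lim_n \int_M d\alpha_n \wedge \omega^{m-k} = \int_M \omega^k \wedge \omega^{m-k}$, which contradicts $\int_M \omega^m \ne 0$.

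For part (2) I would reverse the roles. For $q \in [1,p]$ the same cutoff estimate — now exploiting $q \le p$ — shows $\omega^k \in \mathrm{dom}(d_{q,\min})$ with vanishing differential, so $[\omega^k] \in \overline{H}^{2k}_{q,\min}(M,g)$ is well defined in the first place. On the other hand $\omega^{m-k}$, being smooth, bounded and closed, sits in $\mathrm{dom}(d_{q',\max})$ automatically thanks to $\vol_g(M) < \infty$. The argument proceeds as before, now with $\alpha_n \in \mathrm{dom}(d_{q,\min})$ and the adjointness between $d_{q,\min}$ and $d_{q',\max}$, forcing $\int_M d\alpha_n \wedge \omega^{m-k} = 0$, and passing to the limit yields the same contradiction.

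The main technical ingredient I anticipate relying on is the integration-by-parts identity between the maximal and minimal closed extensions of $d$ in conjugate $L^p$-spaces; this is standard in the $L^p$-cohomology literature, and I would expect to cite it from the technical propositions collected in the preceding subsection rather than re-derive it. Beyond that, the only work specific to this theorem is the cutoff verification that places $\omega^{m-k}$ (resp.\ $\omega^k$) in the minimal domain, and that step is precisely where the one-sided conditions $q \ge p'$ (resp.\ $q \le p$) emerge.
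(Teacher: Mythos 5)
Your argument is correct, and for part (1) it takes a genuinely different route from the paper. The paper first converts $p$-parabolicity into $W^{1,p}(M,g)=W^{1,p}_0(M,g)$ (Prop. \ref{paraSob}), dualizes via \eqref{cope}--\eqref{naghen} to get $d_{2m-1,\max,p'}=d_{2m-1,\min,p'}$, and then feeds this top-degree $L^{p'}$-Stokes property into Prop. \ref{LpStokes}, i.e.\ into the general Theorem \ref{calamari}, whose own proof pushes the hypothetical exact sequence down to $L^{p'}$, replaces max-approximants by compactly supported ones, and concludes with Prop. \ref{littletool}. You instead use parabolicity directly to place $\omega^{m-k}$ in $\mathcal{D}(d_{2m-2k,\min,q'})$ with vanishing differential and then kill $\int_M d\alpha_n\wedge\omega^{m-k}$ by the max/min adjointness; this is more symmetric (parts (1) and (2) become the same pairing argument with the roles of the two factors exchanged) and avoids the detour through Theorem \ref{calamari}, whereas the paper's factorization buys a more general non-vanishing criterion (any hypothesis forcing $\overline{H}^{2m}_{p,\max}=\overline{H}^{2m}_{p,\min}$ suffices) that is reused elsewhere. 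Your part (2) is essentially the paper's own proof, with Prop. \ref{littletool} replaced by the equivalent duality phrasing.

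Two small points to tidy up. First, the cutoffs furnished by Definition \ref{parax} are Lipschitz, not smooth, so $\chi_n\omega^{m-k}$ is not in $\Omega_c^{2m-2k}(M)$; either smooth the cutoffs or, as the paper does, combine Prop. \ref{tecprop} (with $\gamma=\chi_n\in\mathcal{D}(d_{0,\max,\infty})$) and Prop. \ref{olimpia} to see that the compactly supported products lie in the minimal domain, after which closedness of $d_{\min,q'}$ gives the claim. Second, the duality identities \eqref{cope}--\eqref{naghen} are stated only for $1<q<\infty$, while your argument also uses the endpoint pairs $(q,q')=(\infty,1)$ in part (1) and $(1,\infty)$ in part (2); these cases follow easily by approximating the element of the minimal domain by compactly supported smooth forms in graph norm and using the distributional definition \eqref{nerop}, or, for $q=\infty$ in part (1), by simply using finite volume to reduce to $q=p'$ via \eqref{corno}. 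Neither point is a gap in the idea, but both deserve a sentence in a written-up proof.
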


Finally the last section contains various examples and applications. We exhibit, especially in the framework of K\"ahler manifolds, various examples of metrics satisfying the above theorems. Moreover we show some topological applications of our results. In particular we prove the following

\begin{teo}
\label{ssinghom}
Let $(X,h)$ be a compact and irreducible K\"ahler space of complex dimension $m$. Assume that every point $p\in \sing(X)$ has a local base of open neighborhoods whose regular parts are connected. Then  $$H^{2k}(X,\mathbb{R})\neq  \{0\}$$ for each $k=0,1...,m$. In particular if $X$ is a compact and irreducible normal K\"ahler space then $$H^{2k}(X,\mathbb{R})\neq  \{0\}$$ for each $k=0,...,m$.
\end{teo}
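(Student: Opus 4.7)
The plan is to apply Theorem \ref{scalamari} to the regular part of $X$ and then translate the resulting non-vanishing in reduced $L^p$-cohomology into non-vanishing in singular cohomology. Set $M := \reg(X)$ and $g := h|_M$, so that $(M, \omega, J, g)$ is an almost Kähler manifold of real dimension $2m$. Since $X$ is compact and $\sing(X)$ has complex codimension at least one, $M$ has finite volume, and irreducibility of $X$ makes $M$ connected; the case $k = 0$ is then immediate, so one may assume $k \geq 1$.

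The key analytic step is to verify the hypothesis of Theorem \ref{scalamari}, namely $\overline{H}^{2m}_{p,\max}(M, g) = \overline{H}^{2m}_{p,\min}(M, g)$ for some $p \in [1, \infty)$. I expect this to be where the local-connectedness hypothesis enters: because every point of $\sing(X)$ admits a basis of neighborhoods with connected regular parts, one can build compactly supported smooth cutoff functions on $M$ that tend to $1$ with $L^p$-controlled behavior near $\sing(X)$, and this forces the closures of the range of $d$ on the maximal and minimal domains in degree $2m-1$ to coincide. (For a normal Kähler space this connectedness of regular neighborhoods is automatic, which will give the corollary stated at the end of the theorem.) Granting this, Theorem \ref{scalamari} yields $[\omega^k] \neq 0$ in $\overline{H}^{2k}_{q,\max}(M, g)$ for every $q \in [p, \infty]$, in particular for $q = \infty$.

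To finish, I would construct a natural map $\Psi \colon \overline{H}^{2k}_{\infty,\max}(M, g) \to H^{2k}(X, \mathbb{R})$ sending $[\omega^k]$ to its class in the singular cohomology of $X$: a bounded closed smooth form on $M$ extends by zero to a current on the compact analytic space $X$ since $\sing(X)$ has real codimension at least two, the extension is closed as a current by the same dimension count, and the local connectedness of regular neighborhoods prevents any distributional contribution from arising along $\sing(X)$ when one takes the differential of an $L^\infty$-max primitive. Through the de Rham isomorphism between current cohomology and singular cohomology on the compact triangulable space $X$, $\Psi$ is well defined, and $\Psi([\omega^k]) \neq 0$ by the previous step, concluding the proof. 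The main obstacle is the first step: turning the purely topological connectedness hypothesis on neighborhoods of $\sing(X)$ into the analytic equality of top-degree reduced max and min $L^p$-cohomologies; everything else should reduce to Theorem \ref{scalamari} together with standard extension-by-zero arguments for currents on analytic spaces with a small singular set.
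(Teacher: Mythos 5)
Your reduction to the regular part, the use of finite volume, and the idea that the decisive analytic input is a non-vanishing statement in (reduced) $L^\infty$/$L^p$-cohomology of $\reg(X)$ are all in the spirit of the paper, but there are two genuine gaps. First, the final step is logically insufficient: you construct a map $\Psi\colon \overline{H}^{2k}_{\infty,\max}(\reg(X),h)\to H^{2k}(X,\mathbb{R})$ and then claim $\Psi([\omega^k])\neq 0$ ``by the previous step''. Knowing $[\omega^k]\neq 0$ in the \emph{source} says nothing about its image unless $\Psi$ is injective, and injectivity is exactly the hard point: a class could be non-exact in the $L^\infty$ sense on $\reg(X)$ yet become exact in $H^{2k}(X,\mathbb{R})$. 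Moreover ``currents on $X$'' and a de Rham isomorphism on a singular complex space are not standard objects; the naive de Rham comparison fails on singular spaces, which is precisely why the paper instead builds a complex of sheaves of continuous forms that are bounded with bounded distributional differential, shows it is a \emph{fine resolution of the constant sheaf} $\mathbb{R}_X$ (fineness from partitions of unity with bounded differential, local exactness from Valette's $L^\infty$ Poincar\'e lemma, \cite{Val} Th.~4.2.1, and local contractibility from Lojasiewicz's triangulation), and thereby obtains an \emph{isomorphism} $H^k(X,\mathbb{R})\cong H^k_{\infty,\max,\mathrm{c}}(\reg(X),h)$. With that isomorphism in hand, a continuous bounded primitive of $\omega^k$ would lie in $\mathcal{D}(d_{2k-1,\max,\infty})$ and hence contradict $\omega^k\notin \overline{\im(d_{2k-1,\max,\infty})}$, which is what Th.~\ref{sqpara} provides.

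Second, you misplace the role of the hypothesis that points of $\sing(X)$ have neighborhood bases with connected regular parts. It is not needed to prove $\overline{H}^{2m}_{p,\max}=\overline{H}^{2m}_{p,\min}$ or to build cutoffs: the regular part of \emph{any} compact Hermitian (in particular K\"ahler) space is $q$-parabolic for $q\in[1,2]$ and has finite volume by the cited results (\cite{BeGu}, \cite{JRU}), so the non-vanishing of $[\omega^k]$ in $\overline{H}^{2k}_{q,\max}$ for $q\in[2,\infty]$ follows from Th.~\ref{sqpara} with no connectedness assumption. The connectedness hypothesis enters only in the sheaf-theoretic step, to guarantee that the kernel of $d_0$ on the sheaf of continuous bounded functions with bounded differential is the \emph{constant} sheaf $\mathbb{R}_X$ (locally constant functions on $\reg(U)$ must be constant for small $U$); without it the complex would not resolve $\mathbb{R}_X$ and the identification with $H^\bullet(X,\mathbb{R})$ breaks down. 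As written, your proposal leaves the crucial comparison with singular cohomology unproved and relies on an implication (non-vanishing upstream $\Rightarrow$ non-vanishing of the image) that does not hold without injectivity.
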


\vspace{1 cm}

\noindent\textbf{Acknowledgments.}  
This work was performed within the framework of the LABEX MILYON (ANR-10-LABX-0070) of Universit\'e de Lyon, within the program ''Investissements d'Avenir'' (ANR--11--IDEX--0007) operated by the French National Research Agency (ANR). It is a pleasure to thank Markus Banagl, Jacopo Gandini, Luca Migliorini and Jon Woolf  for helpful conversations.
 
\section{Background material}

Before starting we point out that in this paper a manifold $M$ will be always assumed to be {\em connected}. The aim of this section is to  recall briefly some basic notions about $L^p$-spaces and $L^p$-cohomology.  We refer to \cite{GoTro}, \cite{GoTr}, \cite{PiRiSe} and \cite{BoY} for proofs and more details. Let $(M,g)$  be an open and possibly incomplete Riemannian manifold of dimension $m$ and let $\dvol_g$ be the one-density associated to $g$.  We consider $M$ endowed with the corresponding {\em Riemannian measure}, see for instance \cite{GYA} pag. 59 or \cite{BGV} pag. 29.  A $k$-form $\omega$ is said {\em measurable} if, for any trivialization $(U,\phi)$ of $\Lambda^kT^*M$, $\phi(\omega|_U)$ is given by a $k$-tuple of measurable functions.  Given a measurable $k$-form $\omega$ the pointwise norm $|\omega|_g$ is defined as $|\omega|_g:= (g(\omega,\omega))^{1/2}$, where with a little abuse of notation we still label by $g$ the metric induced by $g$ on $\Lambda^kT^*M$. Then for every $p\in [1,\infty)$  we can define $L^{p}\Omega^k(M,g)$ as the  space  of measurable  $k$-forms such that    $$\|\omega\|_{L^{p}\Omega^k(M,g)}:=\left(\int_{M}|\omega|_g^p\dvol_g\right)^{1/p}<\infty.$$
For each $p\in [1, \infty)$ we have a Banach space,  for each $p\in (1, \infty)$  we get a reflexive Banach space and  in the case $p=2$ we have a Hilbert space whose inner product is $$\langle \omega, \eta \rangle_{L^2\Omega^k(M,g)}:= \int_{M}g(\omega,\eta)\dvol_g.$$ Moreover $\Omega^k_c(M)$,  the space of smooth $k$-forms with compact support,  is dense in $L^p\Omega^k(M,g)$ for $p\in [1,\infty).$ Finally $L^{\infty}\Omega^{k}(M,g)$ is defined as the space of measurable $k$-forms whose {\em essential\ supp} is bounded, that is the space of measurable  $k$-forms such that $|\omega|_g$ is bounded almost everywhere. Also in this case we get a Banach space.\\
 Consider now the de Rham differential $d_k: \Omega^k_c(M)\rightarrow \Omega_c^{k+1}(M)$ and let $d^t_k: \Omega^{k+1}_c(M)\longrightarrow \Omega^k_c(M)$ be its  {\em formal adjoint}. As it is well known $d^t_k$ is the differential operator uniquely characterized by the following property: for each $\omega\in\Omega_c^k(M)$ and for each $\eta\in \Omega^{k+1}_c(M)$ we have  $$\int_{M}g(\omega,d_k^t\eta)\dvol_g=\int_Mg(d_k\omega,\eta)\dvol_g.$$ For any $p\in [1,\infty)$ we can look at $d_k$ as an unbounded, densely defined and closable operator  acting between $L^p\Omega^k(M,g)$ and $L^p\Omega^{k+1}(M,g)$. In general $d_k$ admits several closed extensions. For our purposes we recall now the definitions of the maximal and minimal one.
The domain of the {\em maximal extension} of $d_k:L^p\Omega^k(M,g)\longrightarrow L^p\Omega^{k+1}(M,g)$ is defined as\\ 
\begin{align}
\label{ner}
& \mathcal{D}(d_{k,\max,p}):=\{\omega\in L^{p}\Omega^k(M,g): \text{there is}\ \eta\in L^p\Omega^{k+1}(M,g)\ \text{such that}\ \int_{M}g(\omega,d_k^t\phi)\dvol_g=\\
 \nn &=\int_{M}g(\eta,\phi)\dvol_g\ \text{for each}\ \phi\in \Omega^{k+1}_c(M)\}.\ \text{In this case we put}\ d_{k,\max,p}\omega=\eta.
\end{align} In other words the maximal extension of $d_k$ is the one defined in the {\em distributional sense}. The domain of the {\em minimal extension} of $d_k:L^p\Omega^k(M,g)\longrightarrow L^p\Omega^{k+1}(M,g)$ is defined as\\ 
\begin{align}
\label{spinaci}
& \mathcal{D}(d_{k,\min,p}):=\{\omega\in L^p\Omega^k(M,g)\ \text{such that there is a sequence}\ \{\omega_i\}\in \Omega_c^k(M)\ \text{with}\ \omega_i\rightarrow \omega\\ \nn & \text{in}\ L^{p}\Omega^k(M,g)\ \text{and}\ d_k\omega_i\rightarrow \eta\ \text{in}\ L^p\Omega^{k+1}(M,g)\ \text{to some }\ \eta\in L^p\Omega^{k+1}(M,g)\}.\ \text{We put}\ d_{k,\min,p}\omega=\eta.
\end{align} Briefly the minimal extension of $d_k$ is the closure of $\Omega^{k}_c(M)$ under the graph norm of $d_k$. Clearly $\mathcal{D}(d_{k,\min,p})\subset \mathcal{D}(d_{k,\max,p})$ and $d_{k,\max,p}\omega=d_{k,\min,p}\omega$ for any $\omega\in \mathcal{D}(d_{k,\min,p})$. Note that for $k=0$ the maximal domain $\mathcal{D}(d_{0,\max,p})$ is nothing but the {\em Sobolev space} $W^{1,p}(M,g)$ while the minimal domain $\mathcal{D}(d_{0,\min,p})$ is nothing but the Sobolev space $W^{1,p}_0(M,g)$, that is the closure of $C^{\infty}_c(M)$ in $W^{1,p}(M,g)$. \\ Also in the case $p=\infty$ $d_k$ admits a closed extension $d_{k,\max,\infty}:L^{\infty}\Omega^k(M,g)\longrightarrow L^{\infty}\Omega^{k+1}(M,g)$ defined as\\ 
\begin{align}
\label{nerop}
& \mathcal{D}(d_{k,\max,\infty}):=\{\omega\in L^{\infty}\Omega^k(M,g): \text{there is}\ \eta\in L^{\infty}\Omega^{k+1}(M,g)\ \text{such that}\ \int_{M}g(\omega,d_k^t\phi)\dvol_g=\\
 \nn &=\int_{M}g(\eta,\phi)\dvol_g\ \text{for each}\ \phi\in \Omega^{k+1}_c(M)\}.\ \text{In this case we put}\ d_{k,\max,\infty}\omega=\eta.
\end{align} In other words a measurable $k$-form $\omega\in L^{\infty}\Omega^k(M,g)$ lies in $\mathcal{D}(d_{k,\max,\infty})$ if the distributional action of $d_k$ applied to $\omega$ lies in $L^{\infty}\Omega^{k+1}(M,g)$.\\
It is easy to verify that if $\omega\in \mathcal{D}(d_{k,\max/\min,p})$ then $d_{k,\max/\min,p}\omega\in \mathcal{D}(d_{k+1,\max/\min,p})$ and the corresponding compositions are identically zero, that is $d_{k+1,\max,p}\circ d_{k,\max,p}\equiv 0$ and $d_{k+1,\min,p}\circ d_{k,\min,p}\equiv 0$. Analogously $d_{k+1,\max,\infty}\circ d_{k,\max,\infty}\equiv 0$ on $\mathcal{D}(d_{k,\max,\infty})$. The $L^p$-maximal/minimal de Rham cohomology of $(M,g)$ is defined as $$H^k_{p,\max/\min}(M,g):=\ker(d_{k,\max,p})/\im(d_{k-1,\max/\min,p})$$ while the reduced $L^p$-maximal/minimal de Rham cohomology of $(M,g)$ is defined as 
\begin{equation}
\label{gintonic}
\overline{H}^k_{p,\max/\min}(M,g):=\ker(d_{k,\max,p})/\overline{\im(d_{k-1,\max/\min,p})},
\end{equation}
where $\overline{\im(d_{k-1,\max/\min,p})}$ is the closure of $\im(d_{k-1,\max/\min,p})$ in $L^p\Omega^k(M,g)$ respectively.
Clearly the identity $\ker(d_{k,\max/\min,p})\rightarrow \ker(d_{k,\max/\min,p})$ induces a surjective map 
\begin{equation}
\label{surj1}
H^k_{p,\max/\min}(M,g)\rightarrow \overline{H}^k_{p,\max/\min}(M,g).
\end{equation}
Similarly we have the $L^{\infty}$-de Rham cohomology and the reduced $L^{\infty}$-de Rham cohomology defined respectively as $$H^k_{\infty,\max}(M,g):=\ker(d_{k,\max,\infty})/\im(d_{k-1,\max,\infty})$$ and 
\begin{equation}
\label{moscowmule}
\overline{H}^k_{\infty,\max}(M,g):=\ker(d_{k,\max,\infty})/\overline{\im(d_{k-1,\max,\infty})}.
\end{equation}
 Clearly also in this case we have a surjective map \begin{equation}
\label{surj2} 
H^k_{\infty,\max}(M,g)\rightarrow \overline{H}^k_{\infty,\max}(M,g)
\end{equation}
 induced by the identity $\ker(d_{k,\max,\infty})\rightarrow \ker(d_{k,\max,\infty})$. We recall now some well known properties that will be frequently used later:\\

\noindent \textbf{a)} Let $1<p<\infty$ and let $p'=p/(p-1)$. Let $\omega\in L^p\Omega^k(M,g)$. Then $\omega\in \mathcal{D}(d_{k,\max,p})$ and $L^{p}\Omega^{k+1}(M,g)\ni\psi=d_{k,\max,p}\omega$ if and only if for any $\eta\in \mathcal{D}(d_{m-k-1,\min,p'})$ we have 
\begin{equation}
\label{cope}
\int_M\psi\wedge \eta=\int_{M}\omega\wedge d_{m-k-1,\min,p'}\eta.
\end{equation}
Analogously, given an arbitrary  $\omega\in L^p\Omega^k(M,g)$, we have $\omega\in \mathcal{D}(d_{k,\min,p})$ and $L^{p}\Omega^{k+1}(M,g)\ni\psi=d_{k,\min,p}\omega$ if and only if for any $\eta\in \mathcal{D}(d_{m-k-1,\max,p'})$ we have 
\begin{equation}
\label{naghen}
\int_M\psi\wedge \eta=\int_{M}\omega\wedge d_{m-k-1,\max,p'}\eta.
\end{equation}

 \noindent \textbf{b)} Let $1<p<\infty$ and let $p'=p/(p-1)$. Then the bilinear map
\begin{equation}
\label{stock}
\overline{H}^k_{p,\max}(M,g)\times \overline{H}^{m-k}_{p',\min}(M,g)\rightarrow \mathbb{R},\ ([\omega],[\eta])\mapsto \int_M\omega\wedge\eta
\end{equation}
is a well-defined and non-degenerate pairing.\\

\noindent \textbf{c)} Let $1\leq p\leq q \leq \infty$. Assume that $\vol_g(M)<\infty$. It is well known that if $\omega\in L^q\Omega^k(M,g)$ then $\omega\in L^p\Omega^k(M,g)$ and that the corresponding inclusion $i:L^q\Omega^k(M,g)\hookrightarrow L^p\Omega^k(M,g)$ is continuous. Then it is immediate to check that if $\omega\in \mathcal{D}(d_{k,\max,q})$ then $\omega=i(\omega)\in \mathcal{D}(d_{k,\max,p})$ and 
\begin{equation}
\label{corno}
i\circ d_{k,\max,q}=d_{k,\max,p}\circ i\ \text{on}\ \mathcal{D}(d_{k,\max,q}).
\end{equation}

Similarly if  $1\leq p\leq q < \infty$ and $\omega\in \mathcal{D}(d_{k,\min,q})$ then $\omega=i(\omega)\in \mathcal{D}(d_{k,\min,p})$ and 
\begin{equation}
\label{delcatria}
i\circ d_{k,\min,q}=d_{k,\min,p}\circ i\ \text{on}\ \mathcal{D}(d_{k,\min,q}).
\end{equation}

\noindent \textbf{d)} Let $1\leq p\leq q \leq \infty$ and let $U\subset M$ be an open subset. Let $\omega\in \mathcal{D}(d_{k,\max,p})\subset L^{p}\Omega^k(M,g)$. Then 
\begin{equation}
\label{otoedoda}
\omega|_U\in \mathcal{D}(d_{k,\max,p})\subset L^{p}\Omega^k(U,g|_U)\ \text{and}\ 
d_{k,\max,p}(\omega|_U)=d_{k,\max,p}\omega|_U.
\end{equation}

We continue now by giving the following definitions.

\begin{defi}
Let $(M,g)$ be a possibly incomplete Riemannian manifold. Let $1\le p<\infty$. We will say that the $L^p$-Stokes theorem holds on $L^p\Omega^k(M,g)$ if the following two operators $$d_{k,\max,p}:L^p\Omega^k(M,g)\rightarrow L^p\Omega^{k+1}(M,g)\ \text{and}\ d_{k,\min,p}:L^p\Omega^k(M,g)\rightarrow L^p\Omega^{k+1}(M,g)$$ coincide.
\end{defi}
A well known result says that when $(M,g)$ is complete then the $L^p$-Stokes theorem holds true for any $k=0,...,m$. We will recall later a proof of this result, see Prop. \ref{Lpmaxmin}. Conversely there are examples of incomplete Riemannian manifolds where the $L^p$-Stokes theorem fails to be true, see for instance \cite{BL}, \cite{BLE}, \cite{JCH} and \cite{GL}.

\begin{defi}
Let $(M,g)$ be a possibly incomplete Riemannian manifold. Let $1\le p<\infty$. We will say that the $L^p$-divergence theorem holds on $(M,g)$ if the following two operators $$d_{0,\max,p}^t:L^p\Omega^1(M,g)\rightarrow L^p(M,g)\ \text{and}\ d_{0,\min,p}^t:L^p\Omega^1(M,g)\rightarrow L^p(M,g)$$ coincide.
\end{defi}
In the above definition $d_{0,\max,p}^t:L^p\Omega^1(M,g)\rightarrow L^p(M,g)$ and $d_{0,\min,p}^t:L^p\Omega^1(M,g)\rightarrow L^p(M,g)$ are the $L^p$-maximal/minimal extensions of $d^t_0:\Omega^1_c(M)\rightarrow C^{\infty}_c(M)$. Analogously  to \eqref{ner} and \eqref{spinaci} $d_{0,\max,p}^t:L^p\Omega^1(M,g)\rightarrow L^p(M,g)$ is defined in the distributional sense while $d_{0,\min,p}^t:L^p\Omega^1(M,g)\rightarrow L^p(M,g)$ is the graph closure of $d_0^t:\Omega^1_c(M,g)\rightarrow C^{\infty}_c(M)$.

\begin{defi}
\label{parax}
Let $(M,g)$ be a possibly incomplete Riemannian manifold.  Let $q\in [1,\infty)$. Then $(M,g)$ is said to be $q$-parabolic if there exists a sequence of Lipschitz functions with compact support $\{\phi_n\}_{n\in \mathbb{N}}\subset \mathrm{Lip}_c(M,g)$ such that
\begin{enumerate}
\item $0\leq \phi_n\leq 1$;
\item $\phi_n\rightarrow 1$ almost everywhere as $n\rightarrow \infty$;
\item $\|d_0\phi_n\|_{L^q\Omega^1(M,g)}\rightarrow 0$ as $n\rightarrow \infty$.
\end{enumerate}
\end{defi}

\begin{defi}
Let $(M,g)$ be a possibly incomplete Riemannian manifold.  Then $(M,g)$ is said to be stochastically complete if $$e^{-t\Delta_0^{\mathcal{F}}}1=1$$ where $\Delta_0^{\mathcal{F}}:L^2(M,g)\rightarrow L^2(M,g)$ is the Friedrich extension of the Laplace-Beltrami operator $\Delta_0:C^{\infty}_c(M)\rightarrow C^{\infty}_c(M)$ and $e^{-t\Delta_0^{\mathcal{F}}}:L^2(M,g)\rightarrow L^2(M,g)$ is the heat operator associated to $\Delta_0^{\mathcal{F}}$.
\end{defi}
We add a small remark to the above definition. The heat operator $e^{-t\Delta_0^{\mathcal{F}}}$
is an integral operator with a positive smooth kernel $k(t,x,y)$, $(t,x,y)\in (0,\infty)\times M\times M$, such that $\int_Mk(t,x,y)\dvol_g(y)\leq 1$ for all $t>0$ and $x\in M$. Hence, given a bounded function $f\in L^{\infty}(M)$, the function $e^{-t\Delta_0^{\mathcal{F}}}f(x):=\int_Mk(t,x,y)f(y)\dvol_g(y)$ is well defined and it is still  bounded.\\ We invite the reader to consult \cite{GYA} and \cite{paraTroya} for an in-depth treatment about $q$-parabolicity and stochastic completeness. The definition of the heat operator can be found for instance in \cite{BGV} or \cite{GYA}.\\   

We conclude this section by recalling few basic notions concerning {\em almost K\"ahler manifolds}, see e.g. \cite{ToWeYa} for more details. Let $(M,\omega)$ be a symplectic manifold of dimension $2m$. It is well known that there exists a compatible almost-complex structure $J\in \mathrm{End}(TM)$, that is an endomorphism of tangent bundle of $M$ satisfying $J^2=-\text{Id}$ and such that  $g(X,Y):=\omega(X,JY)$  is a Riemannian metric on $M$, where  $X,Y\in \frak{X}(M)$, see e.g. \cite{Cannas}. Moreover the symplectic form $\omega$ and the volume form $\dvol_g$ are related by the following formula: $m!\dvol_g=\omega^m$.

\begin{defi}
\label{almost}
An almost K\"ahler manifold $(M,\omega,J,g)$ is given by a symplectic manifold $(M,\omega)$ endowed with a compatible almost-complex structure $J$ and the Riemannian metric $g$ defined as $g(X,Y):=\omega(X,JY)$ for any  $X,Y\in \frak{X}(M)$.
\end{defi}

From now on by saying that an almost K\"ahler manifold $(M,\omega,J,g)$ is complete we will mean that $g$ is a complete metric on $M$.

\begin{prop}
\label{limitato}
Let $(M,\omega,J,g)$ be an almost K\"ahler manifold. Then $\omega\in L^{\infty}\Omega^2(M,g)$
\end{prop}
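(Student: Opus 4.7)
The plan is to show that the pointwise norm $|\omega|_g$ is in fact constant on $M$, equal to a function of the dimension only; the $L^\infty$-bound then follows trivially.

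First I would fix an arbitrary point $p\in M$ and exploit the compatibility of $J$ with $g$ to produce a $g$-orthonormal basis $\{e_1,\ldots,e_{2m}\}$ of $T_pM$ that is adapted to $J$, namely such that $Je_{2i-1}=e_{2i}$ and $Je_{2i}=-e_{2i-1}$ for $i=1,\ldots,m$. The existence of such a basis is a standard linear-algebraic fact whenever $J$ is a $g$-orthogonal almost-complex structure, which holds in the almost K\"ahler case since $g(JX,JY)=\omega(JX,J^2Y)=-\omega(JX,Y)=\omega(Y,JX)=g(Y,X)=g(X,Y)$.

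Next, using $g(X,Y)=\omega(X,JY)$ and $J^2=-\id$, I would compute
\[
\omega(e_i,e_j)\;=\;-g(e_i,Je_j),
\]
which plugged into the adapted basis gives $\omega(e_{2i-1},e_{2i})=1$ and all other entries (up to antisymmetry) vanishing. Writing $\{e^1,\ldots,e^{2m}\}$ for the dual coframe, this means $\omega_p=\sum_{i=1}^{m}e^{2i-1}\wedge e^{2i}$. Since the $e^i$'s are $g$-orthonormal, the $2$-forms $e^{2i-1}\wedge e^{2i}$ are orthonormal in the induced inner product on $\Lambda^2T^*_pM$, so
\[
|\omega_p|_g^{2}\;=\;\sum_{i=1}^{m}\bigl|e^{2i-1}\wedge e^{2i}\bigr|_g^{2}\;=\;m.
\]

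Because the point $p$ was arbitrary, $|\omega|_g\equiv\sqrt{m}$ on $M$. In particular $|\omega|_g$ is essentially bounded, so $\omega\in L^{\infty}\Omega^2(M,g)$. No step here looks genuinely hard; the only thing to be careful about is the choice of the adapted orthonormal basis and matching the sign conventions in $g(X,Y)=\omega(X,JY)$, which is exactly why I would carry out the pointwise computation in the adapted frame rather than in an arbitrary one.
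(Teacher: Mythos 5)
Your proof is correct, but it takes a different route from the paper. The paper invokes the \emph{second canonical connection} $\nabla$ (a Hermitian connection with $\nabla g=0$ and $\nabla J=0$, cited from Gauduchon and Tosatti--Weinkove--Yau), observes that these two conditions force $\nabla\omega=0$, and then uses metric compatibility to get $d_0(g(\omega,\omega))=0$, so that $|\omega|_g$ is constant and hence bounded. You instead argue pointwise: at each $p\in M$ you check that $J$ is $g$-orthogonal, choose a $J$-adapted $g$-orthonormal frame, and compute directly that $\omega_p=\sum_i e^{2i-1}\wedge e^{2i}$, so $|\omega|_g\equiv\sqrt{m}$; your sign bookkeeping ($\omega(X,Y)=-g(X,JY)$ under the convention $g(X,Y)=\omega(X,JY)$, giving $\omega(e_{2i-1},e_{2i})=1$ and all other entries zero) is consistent. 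Your argument is more elementary and self-contained -- it needs no existence statement for a Hermitian connection and it yields the exact constant $\sqrt{m}$ rather than mere constancy -- while the paper's argument is shorter once the connection is quoted and reflects the general principle that parallel tensors have constant norm. Either proof establishes $\omega\in L^{\infty}\Omega^2(M,g)$.
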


\begin{proof}
Let $\nabla:C^{\infty}(M,TM)\rightarrow C^{\infty}(M,T^*M\otimes TM)$ be the {\em second canonical connection}. We refer to \cite{Gaud}, \cite{ToWeYa} and the reference therein for more details. Here it is enough to recall that $\nabla$ is a connection compatible with both $g$ and $J$, that is $\nabla g=0$ and $\nabla J=0$.  It is easy to check now that these two properties entail that $\omega$ is parallel with respect to $\nabla$. Finally, as $\nabla$ is compatible with $g$, we have
$d_0(g(\omega,\omega))=0$, and thus we can conclude that $\omega\in L^{\infty}\Omega^2(M,g)$ as desired.
\end{proof}

\section{Main results}

This section is split in two subsections. In the first one we collect various technical results that will be needed later. The second one is devoted to the main results of this paper.

\subsection{Some technical propositions}
We have the following propositions:

\begin{prop}
\label{littletool}
Let $(N,g)$ be an oriented and  possibly incomplete Riemannian manifold of dimension $n$.
\begin{enumerate}
\item Let $\eta\in L^{1}\Omega^n(N,g)$. Then $\int_N\eta<\infty$.
\item Let $\eta\in L^{1}\Omega^n(N,g)$ such that $\eta\in \overline{\im(d_{n-1,\min,1})}$. Then $\int_N\eta=0$.
\end{enumerate}
\end{prop}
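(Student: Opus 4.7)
The plan is straightforward: both parts reduce to showing that integration is continuous on $L^{1}\Omega^{n}(N,g)$ and then combining this with the classical Stokes theorem for compactly supported smooth forms.

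For part (1), I would use the fact that on an oriented Riemannian $n$-manifold, every measurable top-degree form can be written uniquely as $\eta = f\,\dvol_g$ with $f$ measurable, and in this identification $|\eta|_g = |f|$ pointwise almost everywhere (because $\dvol_g$ has unit norm at every point). Thus the hypothesis $\eta \in L^{1}\Omega^{n}(N,g)$ is equivalent to $f \in L^{1}(N,\dvol_g)$ in the ordinary sense, so $\int_N \eta := \int_N f\,\dvol_g$ is finite and, moreover,
\begin{equation*}
\left|\int_N \eta\right| \;\leq\; \int_N |f|\,\dvol_g \;=\; \|\eta\|_{L^{1}\Omega^{n}(N,g)}.
\end{equation*}
This inequality has an important corollary I will use repeatedly in part (2): the linear functional $L^{1}\Omega^{n}(N,g) \to \mathbb{R}$, $\eta \mapsto \int_N \eta$, is continuous.

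For part (2), by definition of the closure there exist $\omega_k \in \mathcal{D}(d_{n-1,\min,1})$ with $d_{n-1,\min,1}\omega_k \to \eta$ in $L^{1}\Omega^{n}(N,g)$. By the continuity just proved,
\begin{equation*}
\int_N d_{n-1,\min,1}\omega_k \;\longrightarrow\; \int_N \eta.
\end{equation*}
It therefore suffices to show $\int_N d_{n-1,\min,1}\omega_k = 0$ for each $k$. Unpacking the definition of the minimal domain \eqref{spinaci} at $p=1$, for each fixed $k$ there is a sequence $\phi_{k,j}\in\Omega^{n-1}_c(N)$ with $\phi_{k,j} \to \omega_k$ in $L^{1}\Omega^{n-1}(N,g)$ and $d\phi_{k,j} \to d_{n-1,\min,1}\omega_k$ in $L^{1}\Omega^{n}(N,g)$. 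Since $\phi_{k,j}$ is a smooth compactly supported $(n-1)$-form, the classical Stokes theorem yields $\int_N d\phi_{k,j}=0$, and letting $j\to\infty$, again by the continuity of integration on $L^{1}\Omega^{n}(N,g)$, gives $\int_N d_{n-1,\min,1}\omega_k = 0$. Passing to the limit in $k$ completes the proof.

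I do not foresee a real obstacle here: the argument is essentially formal. The only subtlety is recognizing that part (1) is not merely the finiteness of an integral but, in its quantitative form, the continuity of the integration functional on $L^{1}\Omega^{n}(N,g)$; once this is in hand, part (2) is a double-approximation argument (first by minimal-extension approximants, then by the compactly supported smooth forms furnished by the definition of the minimal domain) combined with classical Stokes.
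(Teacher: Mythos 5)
Your proposal is correct and follows essentially the same route as the paper: part (1) is the pointwise identification $\eta=f\,\dvol_g$ giving $\bigl|\int_N\eta\bigr|\leq\|\eta\|_{L^{1}\Omega^{n}(N,g)}$, and part (2) combines continuity of the integration functional on $L^{1}\Omega^{n}(N,g)$ with the classical Stokes theorem for compactly supported smooth forms. The only cosmetic difference is that you unfold the approximation in two stages (first by $d_{n-1,\min,1}\omega_k$, then by $d\phi_{k,j}$), whereas the paper passes directly to a single sequence $\{\beta_k\}\subset\Omega_c^{n-1}(N)$ with $d_{n-1}\beta_k\to\eta$, which amounts to the same thing since $\overline{\im(d_{n-1,\min,1})}$ coincides with the $L^{1}$-closure of $d_{n-1}\Omega^{n-1}_c(N)$.
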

\begin{proof}
As usual let $\dvol_g$ be the volume form of $g$. Then there exists a function $f\in L^1(N,g)$ such that $f\dvol_g=\eta$. We have $$|\int_N\eta|= |\int_Nf\dvol_g|\leq\int_N|f|\dvol_g=\|\eta\|_{L^1\Omega^n(N,g)}<\infty.$$ This establishes the first point. For the second point, as $\eta\in \overline{\im(d_{n-1,\min,1})}$, we know that there exists a sequence of smooth forms with compact support  $\{\beta_k\}_{k\in \mathbb{N}}\subset \Omega_c^{n-1}(N)$ such that $\lim d_{n-1}\beta_k=\eta$ in $L^{1}\Omega^n(N,g)$ as $k\rightarrow \infty$. Moreover we recall that $L^{\infty}(N)$ is the dual Banach space of $L^1\Omega^n(N,g)$. Therefore we have $\int_N\eta=\lim\int_Nd_{n-1}\beta_k$ as $k\rightarrow \infty$. But $\int_Nd_{n-1}\beta_k=0$ for each $k\in \mathbb{N}$ thanks to the Stokes theorem. Hence $\int_N\eta=0$ as well. 
\end{proof}

\begin{prop}
\label{olimpia}
Let $(N,g)$ be a possibly incomplete Riemannian manifold of dimension $n$. Let $\omega\in \mathcal{D}(d_{k,\max,p})\subset L^p\Omega^k(N,g)$ for some $k=0,1,...,n$ and $1\leq p<\infty$. Assume that there exists an open subset $U\subset N$ with compact closure such that $\omega|_{N\setminus \overline{U}}\equiv0$. Then $\omega\in \mathcal{D}(d_{k,\min,p})$. 
\end{prop}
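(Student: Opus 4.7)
My plan is to reduce the statement to the Euclidean case via a partition of unity, and then produce the required approximating sequence by Friedrichs mollification. The key hypothesis is the compactness of $\overline{U}$, which makes the problem entirely local and insensitive to the geometry of $(N,g)$ far from $\overline{U}$.

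First, I would cover $\overline{U}$ by finitely many coordinate charts $(V_i,\phi_i)$, $i=1,\dots,\ell$, with $\overline{V_i}$ compact in $N$, and pick smooth functions $\chi_i\in C^\infty_c(V_i)$ with $\sum_i \chi_i\equiv 1$ on a neighborhood of $\overline{U}$. Since $\omega$ vanishes outside $\overline{U}$, we have $\omega=\sum_{i=1}^{\ell}\chi_i\omega$. A distributional Leibniz-rule computation (testing against $\phi\in\Omega^{k+1}_c(N)$ and using $d^t(\chi_i\phi)=\chi_i d^t\phi -\iota_{(d\chi_i)^\sharp}\phi$) shows that each $\chi_i\omega$ lies in $\mathcal{D}(d_{k,\max,p})$ with
\[
d_{k,\max,p}(\chi_i\omega)=d\chi_i\wedge\omega+\chi_i\, d_{k,\max,p}\omega,
\]
which belongs to $L^p\Omega^{k+1}(N,g)$ and has compact support inside $V_i$. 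Since $\mathcal{D}(d_{k,\min,p})$ is a vector space, it suffices to prove that each individual $\chi_i\omega$ belongs to $\mathcal{D}(d_{k,\min,p})$.

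Next, fix $i$ and push $\chi_i\omega$ forward via $\phi_i$, extending by zero outside $\phi_i(V_i)$. Applying property \eqref{otoedoda} with $U=V_i$ together with the diffeomorphism $\phi_i$, the pushed-forward form is a compactly supported $L^p$-form on $\mathbb{R}^n$ whose distributional exterior derivative is again compactly supported and in $L^p$; the intrinsic and Euclidean $L^p$-norms are uniformly equivalent on the compact support since the metric coefficients and their inverses are bounded there. Let $\rho_\varepsilon$ be a standard smooth mollifier on $\mathbb{R}^n$ and set $(\chi_i\omega)_\varepsilon:=\rho_\varepsilon\ast(\chi_i\omega)$, convolving component by component. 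For $\varepsilon$ small enough, $(\chi_i\omega)_\varepsilon\in C^\infty_c(\phi_i(V_i))$; standard properties of convolution give $(\chi_i\omega)_\varepsilon\to \chi_i\omega$ in $L^p$, while the identity
\[
d(\chi_i\omega)_\varepsilon=\rho_\varepsilon\ast d(\chi_i\omega),
\]
which is Friedrichs' lemma and is trivial here because $d$ has constant coefficients in Euclidean coordinates, yields $d(\chi_i\omega)_\varepsilon\to d(\chi_i\omega)$ in $L^p$. Pulling back via $\phi_i^{-1}$ and extending by zero to $N$ exhibits $\chi_i\omega$ as a graph-norm limit of elements of $\Omega^k_c(N)$, hence $\chi_i\omega\in\mathcal{D}(d_{k,\min,p})$.

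The only delicate point is the distributional Leibniz step, i.e.\ verifying the displayed formula for $d_{k,\max,p}(\chi_i\omega)$; everything afterwards is classical mollification in $\mathbb{R}^n$. Summing over $i$ yields $\omega\in\mathcal{D}(d_{k,\min,p})$, as desired.
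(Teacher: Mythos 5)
Your proof is correct, but it takes a genuinely different route from the paper's. The paper's argument is a short reduction: it invokes Th.\ 12.5 of \cite{GoTr} (density of $\Omega^k(N)\cap\mathcal{D}(d_{k,\max,p})$ in the graph norm of $d_{k,\max,p}$) to get smooth approximants $\omega_j\to\omega$, and then multiplies by a \emph{single} cutoff $\chi\in C^\infty_c(N)$ with $\chi\equiv 1$ on a neighborhood of $\overline{U}$; since $\chi\omega=\omega$ and $d_0\chi\wedge\omega\equiv 0$, the sequence $\{\chi\omega_j\}\subset\Omega^k_c(N)$ converges to $\omega$ in graph norm with no Leibniz error term to control. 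You instead construct the approximating sequence by hand: a finite partition of unity subordinate to charts covering $\overline{U}$, a distributional Leibniz rule for $d_{k,\max,p}(\chi_i\omega)$ (your verification via $d^t(\chi_i\phi)=\chi_i d^t\phi-\iota_{(d\chi_i)^\sharp}\phi$ is the right computation), and Euclidean mollification in each chart, using that $d$ commutes exactly with convolution and that $g$ is uniformly comparable to the Euclidean metric on the compact sets involved; the hypothesis $1\le p<\infty$ is exactly what makes the mollification converge. What your route buys is self-containedness: you do not need the cited global smoothing theorem, and in effect you reprove its local, compactly supported case. What the paper's route buys is brevity and no chart-by-chart bookkeeping, at the price of leaning on the cited density result. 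One point worth making explicit in your write-up: for $\varepsilon$ small the supports of $(\chi_i\omega)_\varepsilon$ must remain in a fixed compact subset of $\phi_i(V_i)$, so that the constants in the comparison between the Euclidean and intrinsic $L^p$-norms are uniform in $\varepsilon$; you use this implicitly and it is easily arranged.
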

\begin{proof}
According to \cite{GoTr} Th. 12.5 we know that there exists a sequence $\{\omega_j\}\subset \Omega^k(N)\cap L^p\Omega^k(N,g)$ such that $\omega_j\rightarrow \omega$ as $j\rightarrow \infty$ in the graph norm of $\mathcal{D}(d_{k,\max,p})$. Let $V$ be an open neighborhood of $\overline{U}$ with $\overline{V}$ compact. Let $\chi\in C^{\infty}_c(N)$, $\chi:N\rightarrow [0,1]$, be a smooth function with compact support such that  $\chi(x)=1$ for any $x\in V$. Then, using the fact that $\chi\omega=\omega$, $\chi d_{k,\max,p}\omega=d_{k,\max,p}\omega$ and $d_0\chi\wedge \omega\equiv0$, it is an easy exercise to check that the sequence $\{\chi\omega_j\}$ converges to $\omega$  in the graph norm of $\mathcal{D}(d_{k,\max,p})$.  Finally, as $\{\chi\omega_j\}\subset \Omega^k_c(N)$, we can conclude that $\omega\in \mathcal{D}(d_{k,\min,p})$ as desired.
\end{proof}

\begin{prop}
\label{tecprop}
Let $(N,g)$ be a possibly incomplete Riemannian manifold of dimension $n$.  Let $\gamma\in \mathcal{D}(d_{r,\max,\infty})\subset  L^{\infty}\Omega^r(N,g)$.  Then the following properties hold true for any $k=0,1,...,n$ and $p\in [1,\infty)$:
\begin{itemize}
\item If $\omega\in \mathcal{D}(d_{k,\max,p})$ then $\gamma\wedge\omega\in \mathcal{D}(d_{k+r,\max,p})$ and $d_{k+r,\max,p}(\gamma\wedge\omega)=d_{r,\max,\infty}\gamma\wedge\omega+(-1)^r\gamma \wedge d_{k,\max,p}\omega$. 
\item If f $\omega\in \mathcal{D}(d_{k,\min,p})$ then $\gamma\wedge\omega\in \mathcal{D}(d_{k+r,\min,p})$ and $d_{k+r,\min,p}(\gamma\wedge\omega)=d_{r,\max,\infty}\gamma\wedge\omega+(-1)^r\gamma \wedge d_{k,\min,p}\omega$. 
\end{itemize}
\end{prop}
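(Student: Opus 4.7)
\emph{Plan for the proof.} The integrability of $\gamma\wedge\omega$ and of $d_{r,\max,\infty}\gamma\wedge\omega+(-1)^r\gamma\wedge d_{k,\max,p}\omega$ as elements of $L^p$ is immediate from the pointwise bound $|\alpha\wedge\beta|_g\le c|\alpha|_g|\beta|_g$ together with the $L^\infty$ control on $\gamma$ and $d_{r,\max,\infty}\gamma$. So for the first bullet the task reduces, by the distributional characterization \eqref{ner}, to verifying that for every $\phi\in\Omega^{k+r+1}_c(N)$ one has
\begin{equation*}
\int_N g(\gamma\wedge\omega,d^t_{k+r}\phi)\,\dvol_g=\int_N g\!\left(d_{r,\max,\infty}\gamma\wedge\omega+(-1)^r\gamma\wedge d_{k,\max,p}\omega,\,\phi\right)\dvol_g.\tag{$\ast$}
\end{equation*}

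I would prove $(\ast)$ by approximating $\gamma$ via Friedrichs mollification combined with a dual Leibniz identity. Writing $\iota_\sigma$ for the pointwise adjoint of wedge multiplication by $\sigma$, characterised by $g(\sigma\wedge\alpha,\beta)=g(\alpha,\iota_\sigma\beta)$, a short calculation using the classical Leibniz rule on smooth forms yields the pointwise identity
\begin{equation*}
d^t(\iota_\sigma\phi)=(-1)^r\bigl[\iota_\sigma d^t\phi-\iota_{d\sigma}\phi\bigr]\qquad\text{for every smooth $r$-form }\sigma.
\end{equation*}
Fix $\phi$, cover $\supp\phi$ by finitely many charts, take a subordinate partition of unity, and mollify the chart-expression of $\gamma$ on each chart: one obtains smooth forms $\gamma_\varepsilon$ defined on a fixed open neighbourhood $V\supset\supp\phi$ with $\|\gamma_\varepsilon\|_{L^\infty(V,g)}$ uniformly bounded by a multiple of $\|\gamma\|_\infty$, with $\gamma_\varepsilon\to\gamma$ almost everywhere on $V$, and with $d\gamma_\varepsilon\to d_{r,\max,\infty}\gamma$ almost everywhere on $V$ (the last being the content of Friedrichs' commutator lemma, applied chart-by-chart). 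Since $\iota_{\gamma_\varepsilon}\phi\in\Omega^{k+1}_c(N)$ is an admissible test form for $d_{k,\max,p}$, plugging the dual Leibniz identity applied to $\gamma_\varepsilon$ into the defining relation of $d_{k,\max,p}\omega$ and re-translating via the adjointness between $\gamma_\varepsilon\wedge(\cdot)$ and $\iota_{\gamma_\varepsilon}(\cdot)$ yields $(\ast)$ with $\gamma$ replaced by $\gamma_\varepsilon$. All integrands are dominated, uniformly in $\varepsilon$, by integrable functions supported in $\supp\phi$ (schematically $|\omega|_g|d^t\phi|_g$, $|\omega|_g|\phi|_g$, and $|d_{k,\max,p}\omega|_g|\phi|_g$), so dominated convergence as $\varepsilon\to 0$ delivers $(\ast)$ and proves the first bullet.

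For the second bullet, approximate $\omega\in\mathcal D(d_{k,\min,p})$ by a sequence $\omega_j\in\Omega^k_c(N)$ with $\omega_j\to\omega$ and $d\omega_j\to d_{k,\min,p}\omega$ in $L^p$. Each $\gamma\wedge\omega_j$ lies in $L^p$ and vanishes outside the compact set $\supp\omega_j$; applying the first bullet to the smooth $\omega_j$ gives $\gamma\wedge\omega_j\in\mathcal D(d_{k+r,\max,p})$ with derivative $d_{r,\max,\infty}\gamma\wedge\omega_j+(-1)^r\gamma\wedge d\omega_j$, and Proposition \ref{olimpia} upgrades this to membership in $\mathcal D(d_{k+r,\min,p})$. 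The $L^\infty$ bounds on $\gamma$ and on $d_{r,\max,\infty}\gamma$ then show that the pair $(\gamma\wedge\omega_j,\,d(\gamma\wedge\omega_j))$ is Cauchy in $L^p\Omega^{k+r}(N,g)\times L^p\Omega^{k+r+1}(N,g)$, with limit $(\gamma\wedge\omega,\,d_{r,\max,\infty}\gamma\wedge\omega+(-1)^r\gamma\wedge d_{k,\min,p}\omega)$; closedness of $d_{k+r,\min,p}$ then yields the conclusion.

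The only step that is not a purely formal manipulation is the Friedrichs-type regularization of $\gamma$ on the manifold, specifically the almost-everywhere (or $L^q_{\mathrm{loc}}$) convergence $d\gamma_\varepsilon\to d_{r,\max,\infty}\gamma$ via the commutator lemma chart-by-chart; once this is in place, everything else is adjointness of wedge with interior product, Prop. \ref{olimpia}, and dominated convergence.
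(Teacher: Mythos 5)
Your argument is correct, but for the first bullet it follows a genuinely different route from the paper. The paper never dualizes: it invokes the smooth approximation theorem of Gol'dshtein--Troyanov (\cite{GoTr}, Th.\ 12.5) twice --- once globally, to approximate $\omega$ by smooth forms $\omega_j$ in the graph norm of $d_{k,\max,p}$, and once locally on relatively compact open sets $U$, to approximate $\gamma|_U$ in the graph norm of $d_{r,\max,p}$ --- and then checks the Leibniz identity for $\gamma\wedge\omega_j$ as a local distributional statement before passing to the limit in $j$ using closedness of $d_{k+r,\max,p}$. You instead work directly with the distributional characterization \eqref{ner}: you dualize the Leibniz rule through the pointwise adjoint $\iota_\sigma$ of wedge multiplication, test against $\iota_{\gamma_\varepsilon}\phi\in\Omega^{k+1}_c(N)$ for a chart-by-chart mollification $\gamma_\varepsilon$ of $\gamma$ near $\supp\phi$, and pass to the limit in $\varepsilon$ by dominated convergence. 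This buys you independence from the cited approximation theorem (no approximation of $\omega$ at all), at the price of carrying the regularization details yourself; note in particular that the domination of the term involving $d\gamma_\varepsilon$ requires a uniform $L^{\infty}$ bound on $d\gamma_\varepsilon$, not only on $\gamma_\varepsilon$ --- this does hold for your construction, since with a finite partition of unity $\{\chi_i\}$ one has $d\gamma_\varepsilon=\sum_i\rho_\varepsilon*(d\chi_i\wedge\gamma+\chi_i\, d_{r,\max,\infty}\gamma)$ with all entries in $L^{\infty}$, but it should be stated explicitly (and, as you observe, in local coordinates mollification commutes with $d$ exactly, so no genuine commutator estimate is needed). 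For the second bullet your argument (approximate $\omega$ by $\Omega^k_c$ forms, apply the first bullet, use Prop.\ \ref{olimpia} to get compactly supported elements of the minimal domain, and conclude by closedness) coincides with the paper's proof.
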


\begin{proof}
Let $\omega\in \mathcal{D}(d_{k,\max,p})$. We first point out that $d_{r,\max,\infty}\gamma\wedge\omega+(-1)^r\gamma \wedge d_{k,\max,p}\omega\in L^p\Omega^{r+k+1}(N,g)$. This follows easily by the fact that $\gamma\in L^{\infty}\Omega^r(N,g)$, $d_{r,\max,\infty}\gamma\in L^{\infty}\Omega^{r+1}(N,g)$, $\omega\in L^p\Omega^{k}(N,g)$ and $d_{k,\max,p}\omega\in L^p\Omega^{k+1}(N,g)$. According to \cite{GoTr} Th. 12.5  we know that there is a sequence $\{\omega_j\}\subset \Omega^{k}(N)\cap \mathcal{D}(d_{k,\max,p})$ such that $\omega_j\rightarrow \omega$ as $j\rightarrow \infty$ in the graph norm of $d_{k,\max,p}$. Assume for the moment that 
\begin{equation}
\label{riccinon}
\{\gamma\wedge\omega_j\}\subset \mathcal{D}(d_{k+r,\max,p})\ \text{and that}\ d_{k+r,\max,p}(\gamma\wedge\omega_j)=d_{r,\max,\infty}\gamma\wedge\omega_j+(-1)^r\gamma \wedge d_k\omega_j.
\end{equation}
It is immediate to check that $\gamma\wedge\omega_j\rightarrow \gamma\wedge\omega$  in $L^p\Omega^{r+k}(N,g)$ and $d_{r,\max,\infty}\gamma\wedge\omega_j+(-1)^r\gamma \wedge d_k\omega_j\rightarrow d_{r,\max,\infty}\gamma\wedge\omega+(-1)^r\gamma \wedge d_{k,\max,p}\omega$ in $L^p\Omega^{r+k+1}(N,g)$ as $j\rightarrow \infty$. Thus we can deduce that $$\gamma\wedge\omega\in \mathcal{D}(d_{r+k,\max,p})\ \text{and}\ d_{r+k,\max,p}(\gamma\wedge \omega)=d_{r,\max,\infty}\gamma\wedge\omega+(-1)^r\gamma \wedge d_{k,\max,p}\omega.$$ In order to complete the proof of the first point we are left  to show \eqref{riccinon}. To this aim it is enough to show that for each point $x\in N$ there exists an open  neighborhood $U$ such that $(\gamma\wedge\omega_j)|_U\in \mathcal{D}(d_{k+r,\max,p})\subset L^p\Omega^{k+r}(U,g|_U)$ and 
$(d_{r+k,\max,p}(\gamma\wedge\omega_j))|_U=(d_{r,\max,\infty}\gamma\wedge\omega_j+(-1)^r\gamma \wedge d_k\omega_j)|_{U}$. Therefore let $U$ be an open subset of $N$ with compact closure. Clearly for any $p\in [1,\infty)$ $\gamma|_U\in \mathcal{D}(d_{r,\max,p})\subset L^p\Omega^r(U,g|_U)$ and $d_{r,\max,p}(\gamma|_U)=d_{r,\max,\infty}(\gamma|_U)=(d_{r,\max,\infty}\gamma)|_U$, see \eqref{corno} and \eqref{otoedoda}. Using again \cite{GoTr} Th. 12.5 we know that there is a sequence $\{\gamma_n\}\subset \Omega^r(U)\cap \mathcal{D}(d_{r,\max,p})\subset L^p\Omega^r(U,g|_U)$ such that $\gamma_n\rightarrow \gamma|_U$ in $L^p\Omega^r(U,g|_U)$ and $d_r\gamma_n\rightarrow (d_{r,\max,\infty}\gamma)|_U$ in $L^p\Omega^{r+1}(U,g|_U)$ as $n\rightarrow \infty$. Consider now the sequence $\{\gamma_n\wedge\omega_j|_U\}_{n\in \mathbb{N}}$. Since $\overline{U}$ is compact and $\omega_j\in \Omega^k(M)$ we have $\omega_j|_U\in L^{\infty}\Omega^k(U,g|_U)$ and $d_k\omega_j|_U\in L^{\infty}\Omega^{k+1}(U,g|_U)$; this allows us to get immediately that  $\gamma_n\wedge\omega_j|_U\in L^p\Omega^{r+k}(U,g|_U)$, $d_{r+k}(\gamma_n\wedge \omega_j|_U)\in L^p\Omega^{r+k+1}(U,g|_U)$, $\gamma_n\wedge\omega_j|_U\rightarrow (\gamma\wedge\omega_j)|_U$ in $L^p\Omega^{r+k}(U,g|_U)$ and $d_{r+k}(\gamma_n\wedge\omega_j|_U)\rightarrow (d_{r,\max,\infty}\gamma\wedge\omega_j+(-1)^r\gamma \wedge d_k\omega_j)|_U$ in $L^p\Omega^{k+1}(U,g|_U)$ as $n\rightarrow \infty$. Therefore $$(\gamma\wedge\omega_j)|_U\in \mathcal{D}(d_{r+k,\max,p})\subset L^p\Omega^{r+k}(U,g|_U)\ \text{and}\ d_{r+k,\max,p}(\gamma\wedge\omega_j)|_U= (d_{r,\max,\infty}\gamma\wedge\omega_j+(-1)^r\gamma \wedge d_k\omega_j)|_U.$$ In other words the restriction over $U$ of $d_{k+r}(\gamma\wedge\omega_j)$, where  $d_{k+r}(\gamma\wedge\omega_j)$ is understood in the distributional sense, is given by $(d_{r,\max,\infty}\gamma\wedge\omega_j+(-1)^r\gamma \wedge d_k\omega_j)|_U$. Thus the proof of the first statement is complete. The second statement follows by arguing as for the first one. In particular, as $\omega\in \mathcal{D}(d_{k,\min,p})$, we can take  $\{\omega_j\}\subset \Omega^{k}_c(M)$ such that $\omega_j\rightarrow \omega$ as $j\rightarrow \infty$ in the graph norm of $d_{k,\min,p}$. By the first point we know that $\gamma\wedge \omega\in \mathcal{D}(d_{k,\max,p})$, $\{\gamma\wedge\omega_j\}\subset \mathcal{D}(d_{r+k,\max,p})$ and $\gamma\wedge\omega_j\rightarrow \gamma\wedge\omega$ in $\mathcal{D}(d_{r+k,\max,p})$ with respect to the corresponding graph norm. Finally, since $\gamma\wedge\omega_j$ has compact support we can use Prop. \ref{olimpia} to conclude that $\{\gamma\wedge\omega_j\}\subset \mathcal{D}(d_{k,\min,p})$ and this completes the proof the second statement. 
\end{proof}

\begin{prop}
\label{coredomain}
Let $(M,g)$ be a possibly incomplete Riemannian manifold. Then $W^{1,p}(M,g)\cap L^{\infty}(M)\cap C^{\infty}(M)$ is dense in $W^{1,p}(M,g)$ for any $1\leq p<\infty$.
\end{prop}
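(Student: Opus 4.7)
The plan is a two-step strategy: first invoke the Meyers--Serrin-type theorem already cited in the paper to approximate by smooth functions, then post-compose with a smooth cutoff to make them bounded. Given $f\in W^{1,p}(M,g)=\mathcal{D}(d_{0,\max,p})$, applying \cite{GoTr} Theorem 12.5 with $k=0$ produces a sequence $\{g_j\}\subset C^{\infty}(M)\cap W^{1,p}(M,g)$ with $g_j\to f$ in the graph norm of $d_{0,\max,p}$. It therefore suffices to approximate each individual $g_j$ by elements of $W^{1,p}(M,g)\cap L^{\infty}(M)\cap C^{\infty}(M)$.

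Fix once and for all a smooth $\phi\colon \mathbb{R}\to\mathbb{R}$ such that $\phi(t)=t$ for $|t|\leq 1$, $\phi$ is constant outside $\{|t|\leq 2\}$, and set $\phi_n(t):=n\phi(t/n)$. Then $\phi_n$ is smooth and bounded, satisfies $\phi_n(t)=t$ on $[-n,n]$, $\phi_n(0)=0$, and $\|\phi_n'\|_{\infty}=\|\phi'\|_{\infty}$ uniformly in $n$. Define $h_{j,n}:=\phi_n\circ g_j$. Smoothness of $\phi_n$ and $g_j$ makes $h_{j,n}$ smooth; boundedness of $\phi_n$ puts $h_{j,n}$ in $L^{\infty}(M)$. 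The classical chain rule gives $d h_{j,n}=\phi_n'(g_j)\,dg_j$ pointwise, so the mean value theorem yields $|h_{j,n}|\leq \|\phi'\|_{\infty}|g_j|$ and $|dh_{j,n}|\leq \|\phi'\|_{\infty}|dg_j|$, which place $h_{j,n}$ in $W^{1,p}(M,g)$.

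To finish, I would show $h_{j,n}\to g_j$ in $W^{1,p}(M,g)$ as $n\to\infty$. Both $\phi_n(t)-t$ and $\phi_n'(t)-1$ vanish on $\{|t|\leq n\}$, converge to $0$ pointwise everywhere, and are bounded by $(1+\|\phi'\|_{\infty})|t|$ and $1+\|\phi'\|_{\infty}$ respectively. Hence $h_{j,n}-g_j$ and $dh_{j,n}-dg_j$ are supported on $\{|g_j|>n\}$ and are dominated by the $L^p$ majorants $(1+\|\phi'\|_{\infty})|g_j|$ and $(1+\|\phi'\|_{\infty})|dg_j|$. Dominated convergence gives $h_{j,n}\to g_j$ and $dh_{j,n}\to dg_j$ in $L^p$, hence convergence in $W^{1,p}$. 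A diagonal choice $n=n(j)$ with $\|h_{j,n(j)}-g_j\|_{W^{1,p}}<1/j$ then produces the desired approximating sequence for $f$. There is no real obstacle here; the only mild subtlety is that the na\"ive truncation $\max(-n,\min(n,f))$ would destroy smoothness, which is precisely why the smooth cutoff family $\phi_n$ is used. Everything else is standard chain-rule and dominated-convergence bookkeeping.
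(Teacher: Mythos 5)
Your proof is correct and follows essentially the same strategy as the paper's: reduce to smooth functions via \cite{GoTr} Th.\ 12.5, then post-compose with a smooth, bounded approximation of the identity and conclude by the chain rule and dominated convergence. The only difference is the choice of truncation (your $\phi_n(t)=n\phi(t/n)$ versus the paper's explicit $f/\bigl(\tfrac{f^2}{n}+1\bigr)^{1/2}$), which is immaterial.
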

\begin{proof}
We adapt to our case the prof of Prop. 2.5 in \cite{FrB}. According to \cite{GoTr} Th. 12.5 we know that $W^{1,p}(M,g)\cap C^{\infty}(M)$ is dense in $W^{1,p}(M,g)$ for any $1\leq p<\infty$. Therefore it is enough to show that $W^{1,p}(M,g)\cap L^{\infty}(M)\cap C^{\infty}(M)$ is dense in $W^{1,p}(M,g)\cap C^{\infty}(M)$ with respect to the norm of $W^{1,p}(M,g)$. Let $f\in W^{1,p}(M,g)\cap C^{\infty}(M)$ and let $$f_n:=\frac{f}{\left(\frac{f^2}{n}+1\right)^{\frac{1}{2}}}.$$ Clearly we have $f_n\in C^{\infty}(M)$ and $(\frac{f^2}{n}+1)^{-\frac{1}{2}}\in L^{\infty}(M)$. Thus we get $f_n\in L^p(M,g)$. Moreover we have $$|f_n|=\frac{|f|}{\left(\frac{f^2}{n}+1\right)^{\frac{1}{2}}}\leq \sqrt{n}.$$ We can thus conclude that $f_n\in L^{\infty}(M)$ for each $n\in \mathbb{N}$. Concerning $df_n$ we have $$df_n=\left(\frac{f^2}{n}+1\right)^{-\frac{1}{2}}df-\frac{1}{n}\left(\frac{f^2}{n}+1\right)^{-3/2}f^2df.$$ As $(\frac{f^2}{n}+1)^{-\frac{1}{2}}\in L^{\infty}(M)$ we get $(\frac{f^2}{n}+1)^{-\frac{1}{2}}df\in L^p\Omega^1(M,g)$. For the other term, $\frac{1}{n}(\frac{f^2}{n}+1)^{-3/2}f^2df$, we have  $n/(f^2+n)\leq 1$ which in turn implies $(1+f^2/n)^{-3/2}\leq n/(f^2+n)$. In this way we get 
\begin{equation}
\label{barrabas}
\frac{1}{n}(\frac{f^2}{n}+1)^{-3/2}f^2\leq \frac{f^2}{f^2+n}\leq 1.
\end{equation}
 Since $df\in L^p\Omega^1(M,g)$ we can thus deduce that $\frac{1}{n}(\frac{f^2}{n}+1)^{-3/2}f^2df\in L^p\Omega^1(M,g)$. Therefore $df_n\in L^p\Omega^1(M,g)$. Summarizing we showed that $\{f_n\}_{n\in \mathbb{N}}\subset W^{1,p}(M,g)\cap L^{\infty}(M)\cap C^{\infty}(M)$. Finally we are left to prove that $\{f_n\}$ converges to $f$ in $W^{1,p}(M,g)$. Concerning $\|f_n-f\|_{L^{p}(M,g)}^p$ we have $$\|f_n-f\|_{L^{p}(M,g)}^p=\int_M\left|1-(\frac{f^2}{n}+1)^{-\frac{1}{2}}\right|^p|f|^p\dvol_g.$$ As $1-(\frac{f^2}{n}+1)^{-\frac{1}{2}}\in L^{\infty}(M)$ we can use the Lebesgue dominate convergence theorem to conclude that $f_n\rightarrow f$ in $L^p(M,g)$ as $n\rightarrow \infty$. We come now to $\|df_n-df\|_{L^{p}\Omega^1(M,g)}^p$. We have $$\|df_n-df\|_{L^p\Omega^1(M,g)}^p\leq \|df-\left(\frac{f^2}{n}+1\right)^{-\frac{1}{2}}df\|_{L^p\Omega^1(M,g)}+\|\frac{1}{n}\left(\frac{f^2}{n}+1\right)^{-3/2}f^2df\|_{L^p\Omega^1(M,g)}.$$ Again by virtue of the Lebesgue dominate convergence theorem we have $$\lim_{n\rightarrow \infty}\|df-\left(\frac{f^2}{n}+1\right)^{-\frac{1}{2}}df\|_{L^p\Omega^1(M,g)}=0.$$ Finally for the remaining term we have $$\|\frac{1}{n}\left(\frac{f^2}{n}+1\right)^{-3/2}f^2df\|_{L^p\Omega^1(M,g)}=\int_M\left|\frac{1}{n}\left(\frac{f^2}{n}+1\right)^{-3/2}f^2\right|^p|df|_g^p.$$ Thanks to \eqref{barrabas} we can use again the Lebesgue dominate convergence theorem to deduce that $$\lim_{n\rightarrow \infty}\|\frac{1}{n}\left(\frac{f^2}{n}+1\right)^{-3/2}f^2df\|_{L^p\Omega^1(M,g)}=0.$$ In conclusion we proved that $f_n\rightarrow f$ in $W^{1,p}(M,g)$ as $n\rightarrow \infty$. The proof is thus complete.
\end{proof}

\begin{prop}
\label{messico}
Let $(M,g)$ be a possibly incomplete Riemannian manifold of dimension $m$. Assume that $(M,g)$ is $p$-parabolic with $p\in [1,\infty)$. Then $L^{\infty}\Omega^k(M,g)\cap \mathcal{D}(d_{k,\max,p})\subset \mathcal{D}(d_{k,\min,p})$ for any $k=0,...,m$.
\end{prop}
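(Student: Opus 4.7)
The plan is to construct, for each $\omega\in L^{\infty}\Omega^k(M,g)\cap\mathcal{D}(d_{k,\max,p})$, a sequence of compactly supported approximants lying in $\mathcal{D}(d_{k,\min,p})$ and converging to $\omega$ in the graph norm of $d_{k,\max,p}$. Since $\mathcal{D}(d_{k,\min,p})$ is closed in that graph norm, this will force $\omega\in\mathcal{D}(d_{k,\min,p})$.

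First I would invoke the $p$-parabolicity hypothesis to produce a sequence $\{\phi_n\}\subset\mathrm{Lip}_c(M,g)$ with $0\le\phi_n\le 1$, $\phi_n\to 1$ almost everywhere, and $\|d_0\phi_n\|_{L^p\Omega^1(M,g)}\to 0$. Each $\phi_n$ is bounded and, being Lipschitz, has a bounded distributional differential, so $\phi_n\in\mathcal{D}(d_{0,\max,\infty})$ with $d_{0,\max,\infty}\phi_n=d_0\phi_n$. Apply Prop.~\ref{tecprop} with $\gamma=\phi_n$ and $r=0$: we obtain $\phi_n\omega\in\mathcal{D}(d_{k,\max,p})$ and
$$d_{k,\max,p}(\phi_n\omega)=d_0\phi_n\wedge\omega+\phi_n\,d_{k,\max,p}\omega.$$
Since $\phi_n$ has compact support, so does $\phi_n\omega$, hence Prop.~\ref{olimpia} guarantees $\phi_n\omega\in\mathcal{D}(d_{k,\min,p})$.

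Next I would check the graph-norm convergence $\phi_n\omega\to\omega$. For the zero-order term, $|(\phi_n-1)\omega|_g\le 2|\omega|_g$ with $\omega\in L^p$, so dominated convergence gives $\phi_n\omega\to\omega$ in $L^p\Omega^k(M,g)$. For the differential term, write
$$d_{k,\max,p}(\phi_n\omega)-d_{k,\max,p}\omega=d_0\phi_n\wedge\omega+(\phi_n-1)\,d_{k,\max,p}\omega.$$
The second summand tends to $0$ in $L^p$ by dominated convergence, since $d_{k,\max,p}\omega\in L^p\Omega^{k+1}(M,g)$. The crucial place where the $L^{\infty}$-hypothesis on $\omega$ enters is the first summand: using $|d_0\phi_n\wedge\omega|_g\le|d_0\phi_n|_g\,|\omega|_g$ and $\omega\in L^{\infty}\Omega^k(M,g)$ we obtain
$$\|d_0\phi_n\wedge\omega\|_{L^p\Omega^{k+1}(M,g)}\le\|\omega\|_{L^{\infty}\Omega^k(M,g)}\,\|d_0\phi_n\|_{L^p\Omega^1(M,g)}\longrightarrow 0$$
by $p$-parabolicity. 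Combining the two estimates yields $\phi_n\omega\to\omega$ in the graph norm of $d_{k,\max,p}$, and the closedness of $d_{k,\min,p}$ concludes the argument.

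The main obstacle is essentially bookkeeping: one must verify that the Leibniz rule of Prop.~\ref{tecprop} is legitimately applied to a merely Lipschitz cutoff (handled by noting $\phi_n\in\mathcal{D}(d_{0,\max,\infty})$), and one must explicitly use $\omega\in L^{\infty}$ to turn the $p$-parabolicity condition $\|d_0\phi_n\|_{L^p}\to 0$ into control of $\|d_0\phi_n\wedge\omega\|_{L^p}$. Without the $L^{\infty}$-bound on $\omega$ this key estimate fails, which is precisely what justifies the hypothesis of the proposition.
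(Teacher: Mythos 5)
Your argument is correct and follows essentially the same route as the paper: multiply by the parabolicity cutoffs $\phi_n$, use Prop.~\ref{tecprop} for the Leibniz rule, Prop.~\ref{olimpia} for the compactly supported products, dominated convergence plus the $L^{\infty}$-bound and $\|d_0\phi_n\|_{L^p}\to 0$ for graph-norm convergence, and closedness of $d_{k,\min,p}$ to conclude. Your explicit remark that the Lipschitz cutoff lies in $\mathcal{D}(d_{0,\max,\infty})$, so that Prop.~\ref{tecprop} applies, is a welcome clarification of a step the paper leaves implicit.
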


\begin{proof}
Let $\eta\in L^{\infty}\Omega^k(M,g)\cap \mathcal{D}(d_{k,\max,p})$ and let $\{\phi_n\}$ be a sequence of functions that makes $(M,g)$ $p$-parabolic. Consider the sequence $\{\phi_n\eta\}$. Thanks to Prop. \ref{tecprop} we know that $\{\phi_n\eta\}\subset \mathcal{D}(d_{k,p,\max})$ and $d_{k,p,\max}(\phi_n\eta)=d_0\phi_n\wedge \eta+\phi_nd_{k,p,\max}\eta$.
As $0\leq \phi_n\leq 1$ for each $n\in \mathbb{N}$ we can use the Lebesgue dominate convergence theorem to conclude that $\phi_n\eta\rightarrow \eta$ and $\phi_nd_{k,\max,p}\eta\rightarrow d_{k,\max,p}$ in $L^p\Omega^k(M,g)$ and $L^p\Omega^{k+1}(M,g)$ as $n\rightarrow \infty$. By using the fact that $\|d_0\phi_n\|_{L^p\Omega^1(M,g)}\rightarrow 0$ as $n\rightarrow \infty$ and that $\eta\in L^{\infty}\Omega^k(M,g)\cap \mathcal{D}(d_{k,\max,p})$ we can conclude that $$\lim_{n\rightarrow \infty}\|d_0\phi_n\wedge \eta\|_{L^p\Omega^{k+1}(M,g)}\leq \lim_{n\rightarrow \infty}\|d_0\phi_n\|_{L^p\Omega^1(M,g)}\| \eta\|_{L^{\infty}\Omega^{k}(M,g)}=0.$$ Thus we showed that  $\phi_n\eta\rightarrow \eta$ in  $\mathcal{D}(d_{k,\max,p})$ endowed with the corresponding graph norm as $n\rightarrow \infty$. Finally, thanks to Prop. \ref{olimpia}, we know that $\{\phi_n\eta\}\subset \mathcal{D}(d_{k,\min,p})$ and this allows us to conclude that $\eta \in \mathcal{D}(d_{k,\min,p})$ too. The proof is thus complete.
\end{proof}

\begin{prop}
\label{paraSob}
Let $(M,g)$ be a possibly incomplete Riemannian manifold and let $p\in [1,\infty)$. 
\begin{enumerate} 
\item If $(M,g)$ is $p$-parabolic then $W^{1,p}(M,g)=W^{1,p}_0(M,g)$.
 \item If $\vol_g(M)<\infty$ then $(M,g)$ is $p$-parabolic if and only if $W^{1,p}(M,g)=W^{1,p}_0(M,g)$.
\end{enumerate}
\end{prop}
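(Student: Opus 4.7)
Both statements unpack the identifications $W^{1,p}(M,g)=\mathcal{D}(d_{0,\max,p})$ and $W^{1,p}_0(M,g)=\mathcal{D}(d_{0,\min,p})$, so the whole proof is really about comparing these two domains for the de Rham differential in degree $0$.

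For part~(1), my plan is to reduce the statement to the $k=0$ case of Proposition~\ref{messico} combined with the density result Proposition~\ref{coredomain}. Proposition~\ref{messico} with $k=0$ says exactly that, under $p$-parabolicity,
$$L^{\infty}(M)\cap \mathcal{D}(d_{0,\max,p})\subset \mathcal{D}(d_{0,\min,p}),$$
i.e.\ $L^{\infty}(M)\cap W^{1,p}(M,g)\subset W^{1,p}_0(M,g)$. In particular $W^{1,p}(M,g)\cap L^{\infty}(M)\cap C^{\infty}(M)\subset W^{1,p}_0(M,g)$. By Proposition~\ref{coredomain} the left hand side is dense in $W^{1,p}(M,g)$, and since $W^{1,p}_0(M,g)$ is a closed subspace of $W^{1,p}(M,g)$ the inclusion $W^{1,p}(M,g)\subset W^{1,p}_0(M,g)$ follows. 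The reverse inclusion is tautological, giving equality.

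For part~(2), one direction is just part~(1). For the converse I would exploit the finite volume hypothesis through the constant function $1$: since $\vol_g(M)<\infty$, the function $1$ lies in $L^p(M,g)$ with $d_0 1 = 0$, hence $1\in W^{1,p}(M,g)=W^{1,p}_0(M,g)$. So by the definition of the minimal extension there is a sequence $\{\phi_n\}\subset C^{\infty}_c(M)$ with $\phi_n\to 1$ in $L^p(M,g)$ and $d_0\phi_n\to 0$ in $L^p\Omega^1(M,g)$. Passing to a subsequence I may assume $\phi_n\to 1$ almost everywhere. To verify Definition~\ref{parax} I still need the pointwise bounds $0\leq\phi_n\leq 1$, which I would obtain by truncation: set
$$\psi_n:=\max\bigl(0,\min(\phi_n,1)\bigr).$$
Each $\psi_n$ is Lipschitz with compact support contained in $\mathrm{supp}(\phi_n)$, satisfies $0\leq \psi_n\leq 1$, and $\psi_n\to 1$ almost everywhere since $\phi_n\to 1$ almost everywhere. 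Moreover $d\psi_n$ equals $d\phi_n$ on $\{0<\phi_n<1\}$ and vanishes a.e.\ on the complement, so $|d\psi_n|\leq |d\phi_n|$ a.e.\ and therefore $\|d\psi_n\|_{L^p\Omega^1(M,g)}\leq \|d\phi_n\|_{L^p\Omega^1(M,g)}\to 0$. This is exactly the data required to certify $p$-parabolicity.

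The only nontrivial point is the truncation step in part~(2): one must know that composing a smooth compactly supported function with the Lipschitz map $t\mapsto \max(0,\min(t,1))$ produces a Lipschitz compactly supported function whose weak differential is pointwise dominated by that of the original. This is the standard chain rule for Lipschitz compositions with Sobolev functions, and it is the only genuine technical input beyond the already-proved propositions cited above; the rest is bookkeeping.
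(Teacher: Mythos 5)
Your proof is correct and takes essentially the same route as the paper: part (1) is the reduction via Prop.~\ref{coredomain} combined with the $k=0$ case of Prop.~\ref{messico} (the paper re-runs that cut-off argument citing Prop.~\ref{olimpia}, \ref{tecprop}, \ref{messico}), and part (2) is the paper's argument of approximating the constant function $1$ in $W^{1,p}$ and truncating. Your two-sided truncation $\max(0,\min(\phi_n,1))$ is in fact slightly more careful than the paper's $\min\{\tilde{\phi}_n,1\}$, which on its own need not satisfy the lower bound $0\leq \psi_n$.
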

\begin{proof}
These properties are well known. For the sake of the reader we give a proof. According to Prop. \ref{coredomain} it is enough to show that $W^{1,p}(M,g)\cap L^{\infty}(M)\cap C^{\infty}(M)\subset W^{1,p}_0(M,g)$. Let $f\in W^{1,p}(M,g)\cap L^{\infty}(M)\cap C^{\infty}(M)$ and let $\{\phi_n\}$ be a sequence of Lipschitz functions with compact support that makes $(M,g)$ $p$-parabolic. We want to show  that $\{f\phi_n\}\subset W^{1,p}_0(M,g)$ and $f\phi_n\rightarrow f$ in $W^{1,p}(M,g)$ as $n\rightarrow \infty$ . The inclusion  $\{f\phi_n\}\subset W^{1,p}_0(M,g)$ follows immediately by Prop. \ref{olimpia} and Prop. \ref{tecprop} while the convergence $f\phi_n\rightarrow f$ in $W^{1,p}(M,g)$ is a direct consequence of Prop. \ref{messico}.
Thus we proved that $f\in W_0^{1,p}(M,g)$ and so we can conclude that $W^{1,p}(M,g)=W^{1,p}_0(M,g)$. This shows the first point.  Assume now that  $W^{1,p}(M,g)=W^{1,p}_0(M,g)$ and that  $(M,g)$ has finite volume. Then there exists a sequence $\{\phi_n\}\subset C^{\infty}_c(M)$ such that $\phi_n\rightarrow 1$ in $W^{1,p}(M,g)$ as $n\rightarrow \infty$. As it is well known we can pick up a subsequence $\{\tilde{\phi}_n\}\subset \{\phi_n\}$ such that  $\tilde{\phi}_n\rightarrow 1$ pointwise almost everywhere as $n\rightarrow \infty$. Let us  define $\psi_n:=\min\{\tilde{\phi}_n,1\}$. Then it is straightforward to check that $\{\psi_n\}$ is a sequence of Lipschitz functions with compact support that makes $(M,g)$ $p$-parabolic.
\end{proof}

\subsection{Non-vanishing theorems}
This subsection is concerned with various non-vanishing theorems for the (reduced) $L^p$-cohomology of a possibly incomplete almost K\"ahler manifold of finite volume.  We collect various criteria inspired by the notion of $q$-parabolicity and related properties. We start with the following: 

\begin{teo}
\label{calamari}
Let $(M,\omega,J,g)$ be a possibly incomplete almost K\"ahler manifold of finite volume and dimension $2m>0$.  Assume that $\overline{H}^{2m}_{p,\max}(M,g)=\overline{H}^{2m}_{p,\min}(M,g)$ for some $1\leq p<\infty$. Then $\omega^k$ induces a non trivial class in $\overline{H}^{2k}_{q,\max}(M,g)$ for any $k=1,2,...,m$ and $q\in [p,\infty]$.
\end{teo}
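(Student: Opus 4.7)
The underlying idea is: since $\omega^m$ is (a positive multiple of) the Riemannian volume form, any vanishing phenomenon for $\omega^m$ in $L^1$-reduced minimal cohomology will contradict $\vol_g(M)>0$ via Prop.~\ref{littletool}(2). The strategy is therefore to assume for contradiction that $[\omega^k]=0$ in $\overline{H}^{2k}_{q,\max}(M,g)$ for some $k$ and $q\in[p,\infty]$, multiply a Cauchy sequence of approximating primitives by $\omega^{m-k}$ (which is bounded and closed) to produce an analogous statement at top degree, and then cascade the vanishing downward through $L^q\hookrightarrow L^p\hookrightarrow L^1$ using the hypothesis $\overline{H}^{2m}_{p,\max}=\overline{H}^{2m}_{p,\min}$ to switch from maximal to minimal at level $p$.

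\textbf{First step: reduction to $q=p$.} Observe that because $\vol_g(M)<\infty$, for every $q\in[p,\infty]$ the inclusion $L^q\Omega^\bullet(M,g)\hookrightarrow L^p\Omega^\bullet(M,g)$ is continuous and intertwines the maximal differentials thanks to \eqref{corno}. Consequently, if $[\omega^k]=0$ in $\overline{H}^{2k}_{q,\max}(M,g)$ for some $q\in[p,\infty]$, the same representatives show $[\omega^k]=0$ in $\overline{H}^{2k}_{p,\max}(M,g)$. So it suffices to derive a contradiction assuming $[\omega^k]=0$ in $\overline{H}^{2k}_{p,\max}(M,g)$.

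\textbf{Second step: push the vanishing to top degree.} By Prop.~\ref{limitato}, $\omega\in L^\infty\Omega^2(M,g)$, hence $\omega^{m-k}\in L^\infty\Omega^{2(m-k)}(M,g)$; since $d\omega=0$ classically we have $\omega^{m-k}\in \mathcal{D}(d_{2(m-k),\max,\infty})$ with vanishing differential. Now pick a sequence $\{\beta_j\}\subset \mathcal{D}(d_{2k-1,\max,p})$ with $d_{2k-1,\max,p}\beta_j\to \omega^k$ in $L^p\Omega^{2k}(M,g)$. Prop.~\ref{tecprop} applied with $\gamma=\omega^{m-k}$ yields $\omega^{m-k}\wedge\beta_j\in\mathcal{D}(d_{2m-1,\max,p})$ and
\begin{equation*}
d_{2m-1,\max,p}(\omega^{m-k}\wedge\beta_j)=\omega^{m-k}\wedge d_{2k-1,\max,p}\beta_j.
\end{equation*}
Since $\|\omega^{m-k}\|_{L^\infty}<\infty$, the right-hand side converges in $L^p$ to $\omega^{m-k}\wedge\omega^k=\omega^m$, so $\omega^m\in \overline{\im(d_{2m-1,\max,p})}$; equivalently $[\omega^m]=0$ in $\overline{H}^{2m}_{p,\max}(M,g)$.

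\textbf{Third step: cascade to $L^1$ minimal cohomology and conclude.} By the hypothesis $\overline{H}^{2m}_{p,\max}(M,g)=\overline{H}^{2m}_{p,\min}(M,g)$ (whose denominators must coincide since at top degree the numerator is automatically all of $L^p\Omega^{2m}(M,g)$), we get $\omega^m\in \overline{\im(d_{2m-1,\min,p})}$. Apply the finite-volume inclusion $L^p\hookrightarrow L^1$ together with \eqref{delcatria} to a sequence $\{\gamma_j\}\subset\mathcal{D}(d_{2m-1,\min,p})$ with $d_{2m-1,\min,p}\gamma_j\to\omega^m$ in $L^p$: this gives $d_{2m-1,\min,1}\gamma_j\to\omega^m$ in $L^1$, hence $\omega^m\in\overline{\im(d_{2m-1,\min,1})}$. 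Prop.~\ref{littletool}(2) then forces $\int_M\omega^m=0$. But $\omega^m=m!\,\dvol_g$, so $\int_M\omega^m=m!\,\vol_g(M)>0$, a contradiction.

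\textbf{Main obstacle.} The only non-formal input is the Leibniz identity at the level of closed extensions of $d$, which is exactly the content of Prop.~\ref{tecprop}; the rest is a careful bookkeeping of the continuous inclusions between the various $L^q$-spaces that is made possible by $\vol_g(M)<\infty$ and by the compatibility relations \eqref{corno} and \eqref{delcatria}.
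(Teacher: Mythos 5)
Your argument is correct and follows essentially the same route as the paper's proof: assume $[\omega^k]=0$, wedge the approximating primitives with the bounded closed form $\omega^{m-k}$ via Prop.~\ref{tecprop}, use the hypothesis to pass from $\overline{\im(d_{2m-1,\max,p})}$ to $\overline{\im(d_{2m-1,\min,p})}$, descend to $L^1$ by finite volume, and contradict $\int_M\omega^m=m!\,\vol_g(M)>0$ through Prop.~\ref{littletool}. The only (immaterial) difference is that you transfer from $L^q$ to $L^p$ at degree $2k$ before wedging, whereas the paper wedges at level $q$ and then descends the top-degree statement to $L^p$.
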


\begin{proof}
First we start with few remarks. Thanks to Prop. \ref{limitato} we know that $\omega\in L^{\infty}\Omega^2(M,g)$. Clearly for each $k=1,2,...,m$, $\omega^k\in \Omega^{2k}(M)\cap L^{\infty}\Omega^{2k}(M,g)$. Since $\vol_h(M)<\infty$ we have $\omega^k\in \Omega^{2k}(M)\cap L^s\Omega^{2k}(M,g)$ for any $s\in [1,\infty]$. Therefore $\omega^k\in \ker(d_{2k,\max,s})$ for any $s\in [1,\infty]$, that is $\omega^k$ induces a class in  $\overline{H}^{2k}_{s,\max}(M,g)$ for any $s\in [1,\infty]$.
Furthermore, as $\omega^k\in L^{\infty}\Omega^{2k}(M,g)$, we have that the map $L^k:L^p\Omega^r(M,g)\rightarrow L^p\Omega^{r+2k}(M,g)$ given by $\eta\mapsto \omega^k\wedge \eta$ is bounded.  Now we tackle the proof of the statement.  By contrast let us assume that $[\omega^k]=0$ in $\overline{H}^{2k}_{q,\max}(M,g)$ for some $k\in\{1,2,...,m\}$ and some $q\in [p,\infty]$. This means that there exists a sequence $\{\phi_j\}\subset \mathcal{D}(d_{2k-1,\max,q})\subset L^q\Omega^{2k-1}(M,g)$ such that $$\lim_{j\rightarrow \infty} d_{2k-1,\max,q}\phi_j=\omega^k$$ in $L^q\Omega^{2k}(M,g)$. Let us now consider the sequence $\{\phi_j\wedge \omega^{m-k}\}$. According to Prop. \ref{tecprop} we have $\{\phi_j\wedge \omega^{m-k}\}\subset \mathcal{D}(d_{2m-1,\max,q})$ and $d_{2m-1,\max,q}(\phi_j\wedge \omega^{m-k})=(d_{2k-1,\max,q}\phi_j)\wedge \omega^{m-k}$. Hence $$\lim_{j\rightarrow \infty} d_{2m-1,\max,q}(\phi_j\wedge \omega^{m-k})=\omega^m$$ in $L^q\Omega^{2m}(M,g)$. By the fact that $\overline{H}^{2m}_{p,\max}(M,g)=\overline{H}^{2m}_{p,\min}(M,g)$, $\overline{H}^{2m}_{p,\max}(M,g)=L^p\Omega^{2m}(M,g)/\overline{\im(d_{2m-1,p,\max})}$\\ and $\overline{H}^{2m}_{p,\min}(M,g)=L^p\Omega^{2m}(M,g)/\overline{\im(d_{2m-1,p,\min})}$, we can deduce that  $$\overline{\im(d_{2m-1,\max,p})}= \overline{\im(d_{2m-1,\min,p})}.$$  Since $(M,g)$ has finite volume and $q\geq p$ we have in particular that  $\{\phi_j\wedge \omega^{m-k}\}\subset \mathcal{D}(d_{2m-1,\max,p})$ and  $$\lim_{j\rightarrow \infty} d_{2m-1,\max,p}(\phi_j\wedge \omega^{m-k})=\omega^m$$ in $L^p\Omega^{2m}(M,g)$, see \eqref{corno}. Hence we get that $\omega^m\in \overline{\im(d_{2m-1,\max,p})}\subset L^p\Omega^{2m}(M,g)$. This, together with the fact that $\overline{\im(d_{2m-1,\max,p})}=\overline{\im(d_{2m-1,\min,p})}$, implies the existence of a sequence $\{\psi_j\}\subset \Omega_c^{2m-1}(M)$ such that $\lim d_{2m-1}\psi_j=\omega^m$ in $L^p\Omega^{2m}(M,g)$ as $j\rightarrow \infty$. Using again that $(M,g)$ has finite volume we can conclude that $$\lim_{j\rightarrow \infty} d_{2m-1}\psi_j=\omega^m$$ in $L^1\Omega^{2m}(M,g)$. Finally, according to Prop. \ref{littletool}, this implies that $$m!\vol_g(M)=\int_M\omega^m=0$$ which is clearly absurd. We can thus conclude that $[\omega^k]\neq 0$ in $\overline{H}^{2k}_{q,\max}(M,g)$ for each  $k\in\{1,2,...,m\}$ and for any $q\in [p,\infty]$.  
\end{proof}

In the following propositions we collect many properties that imply the assumptions of Th. \ref{calamari}

\begin{prop}
\label{LpStokes}
Let $(M,\omega,J,g)$ be a possibly incomplete almost K\"ahler manifold of finite volume and dimension $2m>0$.  Assume that the $L^p$-Stokes theorem holds on $L^p\Omega^{2m-1}(M,g)$ for some $1\leq p<\infty$. Then $\omega^k$ induces a non trivial class in $\overline{H}^{2k}_{q,\max}(M,g)$ for any $k=1,2,...,m$ and for any $q\in [p,\infty]$.
\end{prop}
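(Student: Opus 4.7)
The plan is to deduce this proposition as a direct corollary of Theorem \ref{calamari}. All I need to verify is that the $L^p$-Stokes hypothesis on $L^p\Omega^{2m-1}(M,g)$ implies the cohomological equality $\overline{H}^{2m}_{p,\max}(M,g)=\overline{H}^{2m}_{p,\min}(M,g)$ that the previous theorem requires; after that, the conclusion is immediate.

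First I would unpack the reduced cohomology in top degree. Since $2m$ is the top degree, $d_{2m,\max,p}$ maps $L^p\Omega^{2m}(M,g)$ into $L^p\Omega^{2m+1}(M,g)=\{0\}$, so $\ker(d_{2m,\max,p})=L^p\Omega^{2m}(M,g)$. From the definition \eqref{gintonic} this gives
\[
\overline{H}^{2m}_{p,\max}(M,g)=L^p\Omega^{2m}(M,g)\big/\overline{\im(d_{2m-1,\max,p})},
\]
and analogously
\[
\overline{H}^{2m}_{p,\min}(M,g)=L^p\Omega^{2m}(M,g)\big/\overline{\im(d_{2m-1,\min,p})}.
\]

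Next, the $L^p$-Stokes hypothesis says that the two operators $d_{2m-1,\max,p}$ and $d_{2m-1,\min,p}$ coincide as unbounded operators on $L^p\Omega^{2m-1}(M,g)$, i.e.\ they have the same domain and act identically (the inclusion $d_{k,\min,p}\subset d_{k,\max,p}$ being always true). Hence their ranges agree, $\im(d_{2m-1,\max,p})=\im(d_{2m-1,\min,p})$, and taking closures in $L^p\Omega^{2m}(M,g)$ gives $\overline{\im(d_{2m-1,\max,p})}=\overline{\im(d_{2m-1,\min,p})}$. Combining this with the description above yields the desired identity $\overline{H}^{2m}_{p,\max}(M,g)=\overline{H}^{2m}_{p,\min}(M,g)$.

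Finally, I would invoke Theorem \ref{calamari} with the given value of $p$, which immediately delivers the non-vanishing of $[\omega^k]$ in $\overline{H}^{2k}_{q,\max}(M,g)$ for every $k=1,\dots,m$ and every $q\in[p,\infty]$. There is really no substantial obstacle here: the whole content of the proposition is the elementary observation that equality of the two extensions of $d_{2m-1}$ forces equality of the top-degree reduced cohomologies, and all the work has already been done in Theorem \ref{calamari}.
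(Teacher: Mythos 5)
Your proposal is correct and follows essentially the same route as the paper: the $L^p$-Stokes hypothesis gives $\overline{\im(d_{2m-1,\max,p})}=\overline{\im(d_{2m-1,\min,p})}$, hence $\overline{H}^{2m}_{p,\max}(M,g)=\overline{H}^{2m}_{p,\min}(M,g)$, and Theorem \ref{calamari} then yields the conclusion. No gaps.
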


\begin{proof}
 As we assumed that the $L^p$-Stokes theorem holds at the level of $(2m-1)$-forms we know that the following two operators coincide $$d_{2m-1,\max,p}:L^p\Omega^{2m-1}(M,g)\rightarrow L^p\Omega^{2m}(M,g)\ \mathrm{and}\ d_{2m-1,\min,p}:L^p\Omega^{2m-1}(M,g)\rightarrow L^p\Omega^{2m}(M,g).$$ Hence we know in particular that $\overline{\im(d_{2m-1,\max,p})}= \overline{\im(d_{2m-1,\min,p})}$ which in turn implies immediately $$\overline{H}^{2m}_{p,\max}(M,g)=\overline{H}^{2m}_{p,\min}(M,g).$$ As $(M,g)$ has finite volume and $q\geq p$ the conclusion now follows immediately by Th. \ref{calamari}.
\end{proof}

\begin{prop}
\label{Lpdiver}
Let $(M,\omega,J,g)$ be a possibly incomplete almost K\"ahler manifold of finite volume and dimension $2m>0$. Assume that the $L^p$-divergence theorem holds for $(M,g)$ for some $1\leq p<\infty$. Then $\omega^k$ induces a non trivial class in $\overline{H}^{2k}_{q,\max}(M,g)$ for any $k=1,2,...,m$ and for any $q\in [p,\infty]$.
\end{prop}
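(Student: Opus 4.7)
The plan is to reduce this proposition to Proposition \ref{LpStokes} by showing that, in the almost K\"ahler setting, the $L^p$-divergence theorem is equivalent to the $L^p$-Stokes theorem on $L^p\Omega^{2m-1}(M,g)$. Once this equivalence is established, the conclusion follows at once from Proposition \ref{LpStokes}.

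The bridge is the Hodge star operator. Since $(M,\omega,J,g)$ is almost K\"ahler, the manifold is canonically oriented by $\omega^m/m!=\dvol_g$, so a globally defined Hodge star $\star\colon\Omega^k(M)\to\Omega^{2m-k}(M)$ exists. Being a pointwise isometry, $|\star\alpha|_g=|\alpha|_g$, it extends for every $p\in[1,\infty)$ to an isometric isomorphism $\star_p\colon L^p\Omega^k(M,g)\to L^p\Omega^{2m-k}(M,g)$. In particular $\star$ exchanges $\Omega^1_c(M)$ with $\Omega^{2m-1}_c(M)$, and on these spaces the standard identity $d^t_0=-\star\,d_{2m-1}\,\star$ holds, relating the formal adjoint of $d_0$ to the exterior derivative on $(2m-1)$-forms.

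I will then use $\star_p$ together with this identity and the definitions \eqref{ner} of the maximal extension and \eqref{spinaci} of the minimal extension to check that $\star_p$ carries $\mathcal{D}(d_{2m-1,\max,p})$ bijectively onto $\mathcal{D}(d^t_{0,\max,p})$ and $\mathcal{D}(d_{2m-1,\min,p})$ bijectively onto $\mathcal{D}(d^t_{0,\min,p})$, intertwining (up to sign) the two pairs of operators. From this it follows immediately that $d^t_{0,\max,p}=d^t_{0,\min,p}$ on $L^p\Omega^1(M,g)$ if and only if $d_{2m-1,\max,p}=d_{2m-1,\min,p}$ on $L^p\Omega^{2m-1}(M,g)$. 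Since the $L^p$-divergence theorem holds by hypothesis, the $L^p$-Stokes theorem on $L^p\Omega^{2m-1}(M,g)$ holds as well, and Proposition \ref{LpStokes} then yields the non-vanishing of $[\omega^k]$ in $\overline{H}^{2k}_{q,\max}(M,g)$ for every $k=1,\dots,m$ and every $q\in[p,\infty]$.

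The only mildly delicate point is the index and sign bookkeeping required to transfer the distributional and graph-closure domains through $\star_p$. However, since $\star$ is a global pointwise isometry intertwining the relevant operators on smooth, compactly supported test forms, no analytic obstacle arises and the identification passes to the closed $L^p$-extensions without real difficulty.
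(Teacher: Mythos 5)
Your proposal is correct and follows essentially the same route as the paper: the paper likewise extends the Hodge star to an isometry of $L^p$-spaces, uses $\pm * d_{k,\max/\min,p}* = d^t_{2m-k-1,\max/\min,p}$ to convert the $L^p$-divergence theorem into the $L^p$-Stokes theorem on $L^p\Omega^{2m-1}(M,g)$, and then invokes Prop.~\ref{LpStokes}. The domain/sign bookkeeping you flag is treated at the same level of detail in the paper, so there is nothing missing.
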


\begin{proof}
Let $*:\Omega^k_c(M)\rightarrow \Omega^{2m-k}_c(M)$ be the Hodge star operator induced by $g$. It is easy to check that $*:\Omega^k_c(M)\rightarrow \Omega^{2m-k}_c(M)$ extends as a bounded and bijective operator $*:L^p\Omega^k(M,g)\rightarrow L^{p}\Omega^{2m-k}(M,g)$ such that $\|\eta\|_{L^p\Omega^k(M,g)}=\|*\eta\|_{L^p\Omega^{2m-k}(M,g)}$ for any $\eta\in L^{p}\Omega^k(M,g)$. Using  $*:L^p\Omega^k(M,g)\rightarrow L^{p}\Omega^{2m-k}(M,g)$ it is easy to show that for each $k=0,1,...,2m$ we have $$\pm*d_{k,\max/\min,p}*=d^t_{2m-k-1,\max/\min,p}$$ where the sign depends on the parity of $k$. Thus the fact that the $L^p$-divergence theorem holds entails that the two operators $$d_{2m-1,\max,p}:L^p\Omega^{2m-1}(M,g)\rightarrow L^p\Omega^{2m}(M,g)\ \mathrm{and}\ d_{2m-1,\min,p}:L^p\Omega^{2m-1}(M,g)\rightarrow L^p\Omega^{2m}(M,g)$$ coincides. In other words the $L^p$-Stokes theorem holds on $(M,g)$ at the level of $(2m-1)$-forms. As $(M,g)$ has finite volume and $q\geq p$ the conclusion follows immediately by Prop. \ref{LpStokes}.
\end{proof}

\begin{prop}
\label{cohomology}
Let $(M,\omega,J,g)$ be a possibly incomplete almost K\"ahler manifold of finite volume and dimension $2m>0$. Assume that $$\dim(\overline{H}^{2m}_{p,\max}(M,g))=1$$  for some $1<p<\infty$. Then $\omega^k$ induces a non trivial class in $\overline{H}^{2k}_{q,\max}(M,g)$ for any $k=1,2,...,m$ and for any $q\in [p,\infty]$.
\end{prop}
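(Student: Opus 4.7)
The plan is to reduce to Theorem \ref{calamari} by deducing from the dimension hypothesis that $\overline{H}^{2m}_{p,\max}(M,g)=\overline{H}^{2m}_{p,\min}(M,g)$, which is exactly the assumption of that theorem.

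First I would identify $\overline{H}^0_{p',\max}(M,g)\cong \mathbb{R}$: since $M$ is connected and has finite volume, constant functions lie in $L^{p'}(M,g)\cap \ker(d_{0,\max,p'})$, and conversely any distributionally closed $L^{p'}$ function on a connected manifold is constant almost everywhere.

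Next I would apply property (b) with $p$ replaced by $p'$ (allowed since $1<p<\infty$ forces $1<p'<\infty$) and with $k=0$, obtaining the non-degenerate pairing
\[
\overline{H}^{2m}_{p,\min}(M,g)\times \overline{H}^0_{p',\max}(M,g)\to \mathbb{R}, \qquad ([\omega],[f])\mapsto \int_M f\,\omega.
\]
Non-degeneracy on the left factor embeds $\overline{H}^{2m}_{p,\min}(M,g)$ into the dual of $\mathbb{R}$, so $\dim \overline{H}^{2m}_{p,\min}(M,g)\le 1$; non-degeneracy on the right factor, evaluated against the nonzero class of the constant function $1$, forces $\overline{H}^{2m}_{p,\min}(M,g)\neq 0$. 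Hence $\dim \overline{H}^{2m}_{p,\min}(M,g)=1$.

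Now, since $\overline{\im(d_{2m-1,\min,p})}\subset \overline{\im(d_{2m-1,\max,p})}$, there is a canonical surjection $\overline{H}^{2m}_{p,\min}(M,g)\twoheadrightarrow \overline{H}^{2m}_{p,\max}(M,g)$. By hypothesis and by the previous step both sides are one-dimensional, so this surjection is an isomorphism; equivalently $\overline{\im(d_{2m-1,\min,p})}=\overline{\im(d_{2m-1,\max,p})}$ as subspaces of $L^p\Omega^{2m}(M,g)$. This is precisely the hypothesis required by Theorem \ref{calamari}, and applying it yields the claimed non-vanishing of $[\omega^k]$ in $\overline{H}^{2k}_{q,\max}(M,g)$ for every $k=1,\dots,m$ and $q\in[p,\infty]$.

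The main subtlety I expect is the dual-pairing step: one must invoke property (b) in the swapped form $p\leftrightarrow p'$ to land on degree-$0$ \emph{max} cohomology (where connectedness and finite volume pin the dimension down to one), and then translate non-degeneracy against a one-dimensional factor into a tight dimensional constraint on both sides. Once this dimension count is in hand, the remainder is routine bookkeeping with the canonical map between min- and max-cohomology.
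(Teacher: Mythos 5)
Your proposal is correct and takes essentially the same route as the paper: identify $\overline{H}^{0}_{p',\max}(M,g)\cong\mathbb{R}$ via connectedness and finite volume, use the pairing \eqref{stock} (with $p$ and $p'$ swapped, $k=0$) to conclude $\dim\overline{H}^{2m}_{p,\min}(M,g)=1$, deduce $\overline{\im(d_{2m-1,\max,p})}=\overline{\im(d_{2m-1,\min,p})}$ from the dimension count, and then invoke Theorem \ref{calamari}. The only cosmetic difference is that you work with the canonical surjection $\overline{H}^{2m}_{p,\min}(M,g)\twoheadrightarrow\overline{H}^{2m}_{p,\max}(M,g)$ coming from $\overline{\im(d_{2m-1,\min,p})}\subset\overline{\im(d_{2m-1,\max,p})}$ (which is the cleaner formulation), whereas the paper phrases the comparison as a map in the opposite direction; the one-dimensionality of both sides closes the argument either way.
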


\begin{proof}
Since $\overline{H}^{2m}_{p,\max}(M,g)=L^p\Omega^{2m}(M,g)/\overline{\im(d_{2m-1,p,\max})}$ and $\overline{H}^{2m}_{p,\min}(M,g)=L^p\Omega^{2m}(M,g)/\overline{\im(d_{2m-1,p,\min})}$ the identity $L^p\Omega^{2m}(M,g)\rightarrow L^p\Omega^{2m}(M,g)$ induces an injective map 
\begin{equation}
\label{iniezione}
\overline{H}^{2m}_{p,\max}(M,g)\hookrightarrow \overline{H}^{2m}_{p,\min}(M,g).
\end{equation}
As $(M,g)$ is connected and has finite volume we have  $\dim(\overline{H}^{0}_{p',\max}(M,g))=1$ where $p'=p/(p-1)$. This latter property follows immediately by the fact that if $f\in \mathcal{D}(d_{0,\max,p'})$ and $d_{0,\max,p'}f=0$ then $f\in C^{\infty}(M)$ and therefore it has to be constant, see e.g. \cite{GoTr} Th. 12.5. As $M$ is connected and  $\vol_g(M)<\infty$ we can conclude that $\ker(d_{0,p',\max})=\mathbb{R}$.  Using the duality \eqref{stock} this in turn implies that  $\dim(\overline{H}^{2m}_{p,\min}(M,g))=1$. Thus, thanks to \eqref{iniezione} and the fact that $\overline{H}^{2m}_{p,\max}(M,g)=L^p\Omega^{2m}(M,g)/\overline{\im(d_{2m-1,p,\max})}$ and $\overline{H}^{2m}_{p,\min}(M,g)=L^p\Omega^{2m}(M,g)/\overline{\im(d_{2m-1,p,\min})}$ we can conclude that $$\overline{\im(d_{2m-1,p,\max})}=\overline{\im(d_{2m-1,p,\min})}.$$ Since $q\geq p$ and $(M,g)$ has finite volume, the conclusion now follows by Th. \ref{calamari}.
\end{proof}

So far we deduced only non-vanishing properties for the class $[\omega^k]$ with respect to the reduced $L^p$-maximal cohomology. Requiring $(M,g)$ to satisfy some stronger assumption, for instance  $q$-parabolicity, we can also show the non-vanishing of $[\omega^k]$ with respect to the reduced $L^p$-minimal cohomology. This is the goal of the next:

\begin{teo}
\label{qpara}
Let $(M,\omega,J,g)$ be a possibly incomplete almost K\"ahler manifold of finite volume and dimension $2m>0$. Assume that $(M,g)$ is $p$-parabolic with $1<p<\infty$ and let $p'=p/(p-1)$. 
\begin{enumerate}
\item For any $k=1,2,...,m$, $\omega^k$ induces a non trivial class in    $\overline{H}^{2k}_{q,\max}(M,g)$ for any  $q\in [p',\infty]$.
\item For any $k=1,2,...,m$, $\omega^k$ induces a non trivial class in    $\overline{H}^{2k}_{q,\min}(M,g)$ for any  $q\in [1,p]$.
\end{enumerate}
\end{teo}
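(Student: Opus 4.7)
The plan treats the two parts separately; in both, the key arithmetic is $\int_M \omega^m = m!\,\vol_g(M) > 0$ and the key technical input is Proposition \ref{messico}, which converts the $p$-parabolicity hypothesis into a passage from maximal to minimal domains.

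\textbf{Part 1.} I would prove $[\omega^k]\neq 0$ in $\overline{H}^{2k}_{p',\max}(M,g)$ directly and then propagate the result to $q\in (p',\infty]$. Assume for contradiction there is $\{\alpha_j\}\subset \mathcal{D}(d_{2k-1,\max,p'})$ with $d_{2k-1,\max,p'}\alpha_j\to \omega^k$ in $L^{p'}\Omega^{2k}(M,g)$. Since $\omega^{m-k}$ is smooth, bounded (by Proposition \ref{limitato}) and closed, it lies in $L^{\infty}\Omega^{2m-2k}(M,g)\cap \mathcal{D}(d_{2m-2k,\max,p})$; Proposition \ref{messico} then upgrades it to $\omega^{m-k}\in \mathcal{D}(d_{2m-2k,\min,p})$ with $d_{2m-2k,\min,p}\omega^{m-k}=0$. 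Identity \eqref{cope} now yields
$$\int_M (d_{2k-1,\max,p'}\alpha_j)\wedge \omega^{m-k} = \int_M \alpha_j\wedge d_{2m-2k,\min,p}\omega^{m-k} = 0$$
for every $j$, and letting $j\to \infty$ produces $\int_M \omega^m = 0$, contradicting $m!\,\vol_g(M)>0$. The extension to $q\in (p',\infty]$ is immediate from finite volume combined with \eqref{corno}: an $L^q$-primitive of $\omega^k$ descends to an $L^{p'}$-primitive, reducing to the case already handled.

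\textbf{Part 2.} Here I would mimic the scheme of Theorem \ref{calamari}'s proof. First, $[\omega^k]$ needs to make sense in $\overline{H}^{2k}_{q,\min}(M,g)$: because $\omega^k$ is closed and bounded, Proposition \ref{messico} puts it in $\mathcal{D}(d_{2k,\min,p})$ with vanishing minimal differential, and \eqref{delcatria} together with $\vol_g(M)<\infty$ then gives $\omega^k\in \mathcal{D}(d_{2k,\min,q})\cap \ker(d_{2k,\min,q})$ for each $q\in [1,p]$. Suppose by contradiction $[\omega^k]=0$ in $\overline{H}^{2k}_{q,\min}(M,g)$, and pick $\{\phi_j\}\subset \mathcal{D}(d_{2k-1,\min,q})$ with $d_{2k-1,\min,q}\phi_j\to \omega^k$ in $L^q\Omega^{2k}(M,g)$. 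Proposition \ref{tecprop}(2) applied with $\gamma=\omega^{m-k}$ (bounded and closed, so $d_{2m-2k,\max,\infty}\omega^{m-k}=0$) yields $\omega^{m-k}\wedge \phi_j\in \mathcal{D}(d_{2m-1,\min,q})$ and
$$d_{2m-1,\min,q}(\omega^{m-k}\wedge \phi_j) = \omega^{m-k}\wedge d_{2k-1,\min,q}\phi_j\longrightarrow \omega^m$$
in $L^q\Omega^{2m}(M,g)$, hence also in $L^1$ by finite volume. Thus $\omega^m\in \overline{\im(d_{2m-1,\min,1})}$, and Proposition \ref{littletool}(2) forces $\int_M\omega^m=0$, a contradiction.

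The principal obstacle in both parts is finding the right place to invoke Proposition \ref{messico} so that a bounded closed form (either $\omega^{m-k}$ in Part 1 or $\omega^k$ in Part 2) is moved into the minimal domain; everything else is a contradiction argument driven by the positivity of $\int_M\omega^m$ together with the standard tools in the first subsection. A minor care is needed at the endpoint $q=\infty$ of Part 1, which follows routinely from \eqref{corno} with $q=\infty$ in property (c).
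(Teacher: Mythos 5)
Your proposal is correct. Part 2 is essentially the paper's own argument: you establish $\omega^k\in\ker(d_{2k,\min,q})$ by moving the bounded closed form into the minimal domain (the paper redoes the cutoff argument with $\phi_j\omega^k$ by hand, while you invoke Prop. \ref{messico} together with \eqref{delcatria}; same substance, yours slightly cleaner), and then wedge an approximating sequence with $\omega^{m-k}$ and contradict Prop. \ref{littletool} via $\int_M\omega^m=m!\,\vol_g(M)>0$. Part 1, however, takes a genuinely more direct route than the paper's. The paper converts $p$-parabolicity into the operator identity $d_{0,\max,p}=d_{0,\min,p}$ (Prop. \ref{paraSob}), dualizes through \eqref{cope}--\eqref{naghen} to obtain the $L^{p'}$-Stokes theorem in degree $2m-1$, and then routes the conclusion through Prop. \ref{LpStokes}, i.e. ultimately through Th. \ref{calamari}; you instead apply Prop. \ref{messico} to the single form $\omega^{m-k}$, placing it in $\mathcal{D}(d_{2m-2k,\min,p})$ with vanishing minimal differential, and pair it against the approximating sequence using the "only if" direction of \eqref{cope}, so the contradiction $\int_M\omega^m=0$ appears in one stroke without ever producing top-degree exact forms. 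Your version buys brevity and makes transparent exactly where parabolicity and the positivity of $\int_M\omega^m$ enter; the paper's detour buys reusable intermediate statements (the top-degree Stokes identity and Th. \ref{calamari}) that it exploits in several other criteria. In a full write-up you should only add the routine justification for passing to the limit in $\int_M(d_{2k-1,\max,p'}\alpha_j)\wedge\omega^{m-k}$, namely that $\beta\mapsto\int_M\beta\wedge\omega^{m-k}$ is continuous on $L^{p'}\Omega^{2k}(M,g)$ by H\"older since $\omega^{m-k}\in L^{p}\Omega^{2m-2k}(M,g)$ (boundedness plus finite volume), and note that the case $k=m$, where $\omega^{m-k}=1$, is covered by the same reasoning.
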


\begin{proof}
According to Prop. \ref{paraSob} we know that $W^{1,p}(M,g)=W^{1,p}_0(M,g)$. This last equality amounts to saying that the operators $$d_{0,\max,p}:L^p(M,g)\rightarrow L^p\Omega^1(M,g)\ \mathrm{and}\  d_{0,\min,p}:L^p(M,g)\rightarrow L^p\Omega^1(M,g)$$ coincide. Using  \eqref{cope} and \eqref{naghen} we get that also the following two operators $$d_{2m-1,\max,p'}:L^{p'}\Omega^{2m-1}(M,g)\rightarrow L^{p'}\Omega^{2m}(M,g)\ \mathrm{and}\ d_{2m-1,\min,p'}:L^{p'}\Omega^{2m-1}(M,g)\rightarrow L^{p'}\Omega^{2m}(M,g)$$ coincides. Hence the $L^{p'}$-Stokes theorem holds on $(M,g)$ at the level of $(2m-1)$-forms. As $(M,g)$ has finite volume and $q\geq p'$ the thesis is now a consequence of Prop. \ref{LpStokes}. Now we deal with the {\em second point}. 
First we have to  show that $\omega^k\in \ker(d_{k,\min,q})$ for each $k=1,...,m$ and each $q\in [1,p]$. This follows easily using  the fact that $(M,g)$ is $p$-parabolic. Let us consider a sequence of Lipschitz functions with compact supports,  $\{\phi_j\}$, that makes $(M,g)$ $p$-parabolic. As $(M,g)$ has finite volume it is straightforward to check that $\{\phi_j\}$  makes $(M,g)$ $q$-parabolic for any $q\in [1,p]$. Consider the sequence $\{\phi_j\omega^k\}$. Then, according to Prop. \ref{tecprop}, we know that $\{\phi_j\omega^k\}\subset \mathcal{D}(d_{2k,\max,q})$ and $d_{2k,\max,q}(\phi_j\omega^k)=\phi_jd_{2k}\omega^k+d_0\phi_j\wedge \omega^k$ $=d_0\phi_j\wedge \omega^k$  for any $q\in [1,p]$. It is now immediate to verify that $\phi_j\omega^k\rightarrow \omega^k$ as $j\rightarrow \infty$ in $\mathcal{D}(d_{2k,\max,q})$ with respect to the corresponding graph norm. As $\phi_j\omega^k$ has compact support for each $j\in \mathbb{N}$, we can use Prop. \ref{olimpia} to deduce that $\omega^k\in \mathcal{D}(d_{2k,\min,q})$. Finally, as $\omega^k\in \mathcal{D}(d_{2k,\min,q})\cap \ker(d_{2k,\max,q})$ we can conclude that $\omega^k\in \ker(d_{2k,\min,q})$ for any $q\in [1,p]$ as required. Thus we know that $\omega^k$ induces a class in $\overline{H}^{2k}_{q,\min}(M,g)$. In order to complete the proof we have to show that $\omega^k\not \in \overline{\im(d_{2k-1,q,\min})}$ for any $q\in [1,p]$ and $k=1,2,...,m$. By contrast let's assume that $\omega^k \in \overline{\im(d_{2k-1,q,\min})}$ for some $q\in[1,p]$ and $k=1,2,...,m$. Thus there exists a sequence $\{\beta_n\}\subset \Omega_c^{2k-1}(M)$ such that $d_{2k-1}\beta_n\rightarrow \omega^k$ in $L^q\Omega^{2k}(M,g)$. Clearly $\{\beta_n\wedge \omega^{m-k}\}\subset \Omega_c^{2m-1}(M)$ and $d_{2k-1}(\beta_n\wedge \omega^{m-k})\rightarrow \omega^m$ in $L^q\Omega^{2m}(M,g)$ as $n\rightarrow \infty$. As $\vol_g(M)<\infty$ we can deduce that $d_{2k-1}(\beta_n\wedge \omega^{m-k})\rightarrow \omega^m$ in $L^1\Omega^{2m}(M,g)$ as $n\rightarrow \infty$. Hence, using Prop. \ref{littletool}, we can conclude that $\vol_g(M)=0$ which is clearly absurd. In conclusion we proved that $\omega^k$ induces a non-trivial class in $\overline{H}^{2k}_{q,\min}(M,g)$ for any $q\in[1,p]$ and $k=1,2,...,m$ as desired.

\end{proof}

\begin{prop}
\label{Sobolev}
Let $(M,\omega,J,g)$ be a possibly incomplete almost K\"ahler manifold of finite volume and  dimension $2m>0$. Assume that $W^{1,p}(M,g)=W^{1,p}_0(M,g)$ for some $1<p<\infty$ and let $p'=p/(p-1)$. 
\begin{enumerate}
\item For any $k=1,2,...,m$, $\omega^k$ induces a non trivial class in    $\overline{H}^{2k}_{q,\max}(M,g)$ for any  $q\in [p',\infty]$.
\item For any $k=1,2,...,m$, $\omega^k$ induces a non trivial class in    $\overline{H}^{2k}_{q,\min}(M,g)$ for any  $q\in [1,p]$.
\end{enumerate}
\end{prop}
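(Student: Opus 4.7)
The plan is to observe that Proposition \ref{Sobolev} is essentially a direct corollary of Theorem \ref{qpara} together with the second part of Proposition \ref{paraSob}. The hypothesis $\vol_g(M)<\infty$ and $W^{1,p}(M,g)=W^{1,p}_0(M,g)$ is exactly the setting in which Proposition \ref{paraSob}(2) gives $p$-parabolicity for free.

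More precisely, I would proceed as follows. First, invoke Proposition \ref{paraSob}(2): since $(M,g)$ has finite volume and $W^{1,p}(M,g)=W^{1,p}_0(M,g)$, the manifold $(M,g)$ is $p$-parabolic. Second, having established $p$-parabolicity, I would simply apply Theorem \ref{qpara} to $(M,\omega,J,g)$ with the same value of $p$. Part (1) of that theorem yields non-triviality of $[\omega^k]$ in $\overline{H}^{2k}_{q,\max}(M,g)$ for all $q\in[p',\infty]$, and part (2) yields non-triviality of $[\omega^k]$ in $\overline{H}^{2k}_{q,\min}(M,g)$ for all $q\in[1,p]$. This matches the two conclusions of Proposition \ref{Sobolev} verbatim.

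There is no real obstacle here: the only substantive input has already been done in Theorem \ref{qpara}, whose proof uses that $p$-parabolicity transfers to all $q\leq p$ (because $\vol_g(M)<\infty$), applies Proposition \ref{tecprop} to the sequence $\{\phi_j\omega^k\}$ to land $\omega^k$ in $\mathcal{D}(d_{2k,\min,q})$, and then derives a contradiction with $\int_M\omega^m=m!\vol_g(M)>0$ via Proposition \ref{littletool} for the minimal statement, while using the duality between $d_{0,\max,p}$ and $d_{2m-1,\min,p'}$ via \eqref{cope}--\eqref{naghen} to transfer the vanishing from the $0$-level to the top-level form and then applying Proposition \ref{LpStokes} for the maximal statement. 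Thus the proof of Proposition \ref{Sobolev} reduces to a two-line argument: apply Proposition \ref{paraSob}(2) to obtain $p$-parabolicity, then apply Theorem \ref{qpara}.
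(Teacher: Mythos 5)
Your proposal is correct and matches the paper's own proof, which is exactly the one-line argument you give: apply Proposition \ref{paraSob} (finite volume plus $W^{1,p}(M,g)=W^{1,p}_0(M,g)$ gives $p$-parabolicity) and then invoke Theorem \ref{qpara}. Your summary of how Theorem \ref{qpara} itself works is also consistent with the paper, so nothing further is needed.
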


\begin{proof}
This follows immediately by Prop. \ref{paraSob} and Th. \ref{qpara}.
\end{proof}

\begin{prop}
\label{stoc}
Let $(M,\omega,J,g)$ be a possibly incomplete almost K\"ahler manifold of finite volume and  dimension $2m>0$.  Assume that $(M,g)$ is stochastically complete. 
\begin{enumerate}
\item For any $k=1,2,...,m$, $\omega^k$ induces a non trivial class in    $\overline{H}^{2k}_{q,\max}(M,g)$ for any  $q\in [2,\infty]$.
\item For any $k=1,2,...,m$, $\omega^k$ induces a non trivial class in    $\overline{H}^{2k}_{q,\min}(M,g)$ for any  $q\in [1,2]$.
\end{enumerate}
\end{prop}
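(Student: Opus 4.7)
The plan is to reduce Proposition \ref{stoc} to Theorem \ref{qpara} by proving that under the stated hypotheses $(M,g)$ is $2$-parabolic. Once this is established, Theorem \ref{qpara} applied with $p=2$, hence $p'=p/(p-1)=2$, directly delivers both items of the statement: the output ranges $q\in[p',\infty]=[2,\infty]$ and $q\in[1,p]=[1,2]$ coincide with those asserted here.

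The core step is therefore to deduce $2$-parabolicity from $\vol_g(M)<\infty$ and stochastic completeness. Since $\vol_g(M)<\infty$, the constant function $1$ belongs to $L^2(M,g)$, so the defining identity $e^{-t\Delta_0^{\mathcal{F}}}1=1$ may be read inside $L^2(M,g)$ and says that the orbit $t\mapsto e^{-t\Delta_0^{\mathcal{F}}}1$ is constant. Consequently the difference quotient $t^{-1}(e^{-t\Delta_0^{\mathcal{F}}}1-1)$ vanishes identically and, by the standard characterization of the generator of a strongly continuous self-adjoint semigroup, one obtains $1\in\mathcal{D}(\Delta_0^{\mathcal{F}})$ with $\Delta_0^{\mathcal{F}}1=0$. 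The operator domain of the Friedrichs extension of $\Delta_0$ sits inside its form domain, which for the Laplacian on functions is the closure of $C^{\infty}_c(M)$ in the $W^{1,2}$-norm, namely $W^{1,2}_0(M,g)$. Hence $1\in W^{1,2}_0(M,g)$.

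Next I would mimic the second half of the proof of Proposition \ref{paraSob}. Unpacking $1\in W^{1,2}_0(M,g)$ yields a sequence $\{\phi_n\}\subset C^{\infty}_c(M)$ with $\phi_n\to 1$ in $W^{1,2}(M,g)$; pass to a subsequence $\{\tilde{\phi}_n\}$ converging to $1$ almost everywhere and set $\psi_n:=\max\{\min\{\tilde{\phi}_n,1\},0\}$. Then $\{\psi_n\}\subset\mathrm{Lip}_c(M,g)$, $0\le\psi_n\le 1$, $\psi_n\to 1$ almost everywhere, and $|d_0\psi_n|\le|d_0\tilde{\phi}_n|$ almost everywhere so that $\|d_0\psi_n\|_{L^2\Omega^1(M,g)}\to 0$. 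This is exactly the content of Definition \ref{parax} for $q=2$, so $(M,g)$ is $2$-parabolic and Theorem \ref{qpara} concludes the argument.

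The main delicate point is the semigroup-theoretic step: one has to check that the $L^{\infty}$-formulation of stochastic completeness, combined with $\vol_g(M)<\infty$, legitimately places $1$ inside the $L^2$ domain $\mathcal{D}(\Delta_0^{\mathcal{F}})$, and that the latter is contained in the form domain $W^{1,2}_0(M,g)$. Beyond this identification the argument is routine bookkeeping within the framework already developed in Theorem \ref{qpara} and Proposition \ref{paraSob}.
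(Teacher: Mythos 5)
Your proposal is correct and follows the same overall reduction as the paper: both arguments boil down to showing that $(M,g)$ is $2$-parabolic and then applying Th.~\ref{qpara} with $p=p'=2$. Where you differ is in how $2$-parabolicity is extracted from stochastic completeness. The paper handles this step by citation: stochastic completeness yields $W^{1,2}_0(M,g)=W^{1,2}(M,g)$ (Grigor'yan--Masamune \cite{GrMa}), and under finite volume this equality is equivalent to parabolicity (\cite{BeGu}, or the paper's own Prop.~\ref{paraSob}). You instead give a self-contained derivation of the weaker but sufficient fact $1\in W^{1,2}_0(M,g)$: since $\vol_g(M)<\infty$ the constant function $1$ lies in $L^2$, the kernel formulation of $e^{-t\Delta_0^{\mathcal F}}1=1$ agrees with the $L^2$-semigroup acting on $1$ (because $k(t,x,y)$ is the integral kernel of the $L^2$-semigroup and $1\in L^2\cap L^\infty$), a fixed vector of a nonnegative self-adjoint semigroup lies in the domain of the generator and in its kernel, and the operator domain of the Friedrichs extension is contained in its form domain, which is exactly $W^{1,2}_0(M,g)$; the truncated cut-offs $\psi_n=\max\{\min\{\tilde\phi_n,1\},0\}$ then produce the sequence required by Def.~\ref{parax} for $q=2$. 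What your route buys is independence from the quoted theorems (at the price of spelling out the $L^\infty$ versus $L^2$ consistency of the heat semigroup, which you correctly flag and which is harmless here); what the paper's route buys is brevity and the stronger statement $W^{1,2}_0=W^{1,2}$. Incidentally, your double truncation is slightly more careful than the paper's own $\psi_n=\min\{\tilde\phi_n,1\}$ in Prop.~\ref{paraSob}, which need not satisfy $\psi_n\geq 0$.
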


\begin{proof}
Since $(M,g)$ is stochastically complete we know that $W^{1,2}_0(M,g)=W^{1,2}(M,g)$, see e.g. \cite{GrMa} Th. 1.7. In particular, as $(M,g)$ has finite volume, stochastic completeness actually amounts to the equality $W^{1,2}_0(M,g)=W^{1,2}(M,g)$ which in turn is equivalent to requiring $(M,g)$ being  parabolic, see for instance \cite{BeGu} Prop. 2.6 or \cite{masa}. Finally the conclusion follows by Th. \ref{qpara}.
\end{proof}

We have now some corollaries. First we need to introduce some notations. According to \eqref{corno}-\eqref{delcatria} we know that for any $1\leq p\leq q\leq \infty$ the inclusion $i:\mathcal{D}(d_{k,\max/\min,q})\hookrightarrow \mathcal{D}(d_{k,\max/\min,p})$ induces a natural map at the level of cohomology groups $i_*:\overline{H}^{2k}_{q,\max/\min}(M,g)\rightarrow\overline{H}^{2k}_{p,\max/\min}(M,g)$. In the next statements the image of $\overline{H}^{2k}_{q,\max/\min}(M,g)$ in $\overline{H}^{2k}_{p,\max/\min}(M,g)$ through $i_*$ will be denoted by 
$\im\left(\overline{H}^{2k}_{q,\max/\min}(M,g)\rightarrow\overline{H}^{2k}_{p,\max/\min}(M,g)\right).$

\begin{cor}
\label{primo}
In the setting of Th. \ref{calamari}. 
\begin{enumerate}
\item For any $p\leq q\leq s\leq\infty$ and  $k=1,2,...,m$ we have 
$$\im\left(\overline{H}^{2k}_{s,\max}(M,g)\rightarrow\overline{H}^{2k}_{q,\max}(M,g)\right) \neq \{0\}.$$
\item  Assume now that $1<p<\infty$. For any $1<r\leq p'$, with $p'=p/(p-1)$, and  $k=1,2,...,m$ we have $\overline{H}^{2k}_{r,\min}(M,g)\neq \{0\}$.
\end{enumerate}
\end{cor}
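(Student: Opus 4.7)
The plan is to deduce both statements formally from Theorem \ref{calamari} together with the standard $L^p$/$L^{p'}$ duality recalled in property \textbf{(b)} and the continuity of the $L^p$ inclusions guaranteed by property \textbf{(c)}.

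For Part 1, I would first observe that since $\omega$ is bounded (Proposition \ref{limitato}) and $M$ has finite volume, the smooth closed form $\omega^k$ lies in $\ker(d_{2k,\max,t})\subset L^t\Omega^{2k}(M,g)$ for every $t\in [1,\infty]$, and therefore determines a class $[\omega^k]_t$ in $\overline{H}^{2k}_{t,\max}(M,g)$. For $p\leq q\leq s\leq \infty$, property \eqref{corno} shows that the continuous inclusion $L^s\Omega^{2k}(M,g)\hookrightarrow L^q\Omega^{2k}(M,g)$ restricts to $\mathcal{D}(d_{2k,\max,s})\hookrightarrow \mathcal{D}(d_{2k,\max,q})$ and descends to the natural map $i_\ast:\overline{H}^{2k}_{s,\max}(M,g)\rightarrow\overline{H}^{2k}_{q,\max}(M,g)$ appearing in the statement. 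Since $i_\ast$ manifestly sends $[\omega^k]_s$ to $[\omega^k]_q$ and the latter is non-zero by Theorem \ref{calamari}, we conclude $\im(i_\ast)\neq\{0\}$.

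For Part 2, assume $1<p<\infty$ and fix $1<r\leq p'$. A direct computation gives $r':=r/(r-1)\in[p,\infty)$, so the non-degenerate pairing \eqref{stock} applies between $\overline{H}^{2j}_{r',\max}(M,g)$ and $\overline{H}^{2m-2j}_{r,\min}(M,g)$. Given $k\in\{1,\ldots,m\}$, I set $j=m-k\in\{0,\ldots,m-1\}$. When $j\geq 1$, Theorem \ref{calamari} yields a non-zero class $[\omega^j]\in\overline{H}^{2j}_{r',\max}(M,g)$, and non-degeneracy of \eqref{stock} forces $\overline{H}^{2k}_{r,\min}(M,g)\neq\{0\}$. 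The only step that is not a direct consequence of Theorem \ref{calamari} is the boundary case $j=0$ (i.e. $k=m$); here I would use that $M$ is connected with $\vol_g(M)<\infty$ to see that the constant function $1$ defines a non-zero element of $\overline{H}^0_{r',\max}(M,g)=\ker(d_{0,\max,r'})$, and then invoke \eqref{stock} once more to obtain a non-zero class in $\overline{H}^{2m}_{r,\min}(M,g)$. The whole corollary is a book-keeping consequence of Theorem \ref{calamari}, the duality \eqref{stock} and the inclusion property \eqref{corno}, the only mildly delicate point being this replacement of $\omega^0$ by the constant function at the endpoint $k=m$.
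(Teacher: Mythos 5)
Your proof is correct and takes essentially the same route as the paper's (very terse) argument: part 1 follows from the functoriality of the inclusion-induced maps $i_\ast$ applied to the class $[\omega^k]$, which is non-trivial in $\overline{H}^{2k}_{q,\max}(M,g)$ for all $q\in[p,\infty]$ by Th. \ref{calamari}, and part 2 follows from Th. \ref{calamari} combined with the duality \eqref{stock}. Your explicit treatment of the endpoint $k=m$ via the constant function in $\overline{H}^{0}_{r',\max}(M,g)$ is a detail the paper leaves implicit, but it is the natural completion of the same argument.
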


\begin{proof}
The first point follows by the fact that, as showed in Th. \ref{calamari}, $\omega^k$ induces a non trivial class in $H^{2k}_{q,\max}(M,g)$ for any $q\in [p,\infty]$ and $k=1,2,...,m$.  The second point follows by Th. \ref{calamari} and \eqref{stock}. 
\end{proof}

\begin{cor}
\label{secondo}
Let $(M,\omega,J,g)$ be a possibly incomplete almost K\"ahler manifold of finite volume and complex dimension $m>0$. Assume that one among Propositions \ref{LpStokes}, \ref{Lpdiver}, \ref{cohomology} holds true for $(M,g)$. Then $$\im\left(\overline{H}^{2k}_{s,\max}(M,g)\rightarrow\overline{H}^{2k}_{q,\max}(M,g)\right) \neq \{0\}$$ for any $p\leq q\leq s\leq\infty$ and  $k=1,2,...,m$  and 
$$\overline{H}^{2k}_{r,\min}(M,g)\neq \{0\}$$ for any $1<r\leq p'$, with $p'=p/(p-1)$, and  $k=1,2,...,m$.
\end{cor}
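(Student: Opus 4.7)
The plan is to observe that each of Propositions \ref{LpStokes}, \ref{Lpdiver}, and \ref{cohomology} produces \emph{exactly} the same output as Theorem \ref{calamari}, namely the non-vanishing of $[\omega^k]$ in $\overline{H}^{2k}_{q,\max}(M,g)$ for every $q\in[p,\infty]$ and every $k=1,\ldots,m$. Once this is granted, the two assertions of the corollary follow by the same formal arguments used in the proof of Corollary \ref{primo}, the three propositions simply replacing Theorem \ref{calamari} as the source of the non-vanishing input. So the heart of the proof is already in place; what remains is to transfer from the class $[\omega^k]$ to the two statements about cohomology groups.

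For the first assertion, I would argue as follows. By Proposition \ref{limitato} one has $\omega\in L^\infty\Omega^2(M,g)$, so, combined with $\vol_g(M)<\infty$, the form $\omega^k$ lies in $\Omega^{2k}(M)\cap L^s\Omega^{2k}(M,g)$ for every $s\in[1,\infty]$. Since $d\omega^k=0$, it represents a class in $\overline{H}^{2k}_{s,\max}(M,g)$ for every such $s$. Given $p\leq q\leq s\leq\infty$, the natural map $i_*\colon\overline{H}^{2k}_{s,\max}(M,g)\to\overline{H}^{2k}_{q,\max}(M,g)$ provided by \eqref{corno} sends $[\omega^k]_s$ to $[\omega^k]_q$, which is non-zero by hypothesis. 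Hence $\im(i_*)\neq\{0\}$.

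For the second assertion I plan to invoke the non-degenerate pairing \eqref{stock}, which in our setting reads
\[
\overline{H}^{2m-2k}_{r',\max}(M,g)\times\overline{H}^{2k}_{r,\min}(M,g)\longrightarrow\mathbb{R},\qquad ([\alpha],[\beta])\mapsto\int_M\alpha\wedge\beta,
\]
for $1<r<\infty$ with $r'=r/(r-1)$. Since $1<r\leq p'$ forces $r'\geq p$, whenever $1\leq m-k\leq m-1$ the class $[\omega^{m-k}]$ is non-zero in $\overline{H}^{2m-2k}_{r',\max}(M,g)$ by hypothesis, and non-degeneracy of the pairing then forces $\overline{H}^{2k}_{r,\min}(M,g)\neq\{0\}$. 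The remaining case $k=m$ I would treat separately: here one needs $\overline{H}^{0}_{r',\max}(M,g)\neq\{0\}$, which is immediate because $\vol_g(M)<\infty$ puts the constant function $1$ in $\ker(d_{0,\max,r'})=\overline{H}^0_{r',\max}(M,g)$.

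I do not anticipate any real obstacle, since the whole argument is a formal unpacking of Corollary \ref{primo}. The only mildly delicate point is the edge case $k=m$ in the second assertion, where $\omega^{m-k}=1$ cannot be fed into the duality exactly as the other cases; this is why I would dispose of it separately using the constants.
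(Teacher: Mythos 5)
Your proposal is correct and follows essentially the same route as the paper: the paper disposes of this corollary by noting that each of Propositions \ref{LpStokes}, \ref{Lpdiver}, \ref{cohomology} yields exactly the conclusion of Th. \ref{calamari}, and then applies Cor. \ref{primo}, whose proof is precisely your two steps (the induced maps $i_*$ carrying $[\omega^k]$ for the first assertion, and the duality \eqref{stock} for the second). Your explicit treatment of the $k=m$ case via the constants is a detail the paper leaves implicit, but it is the same argument it uses elsewhere (e.g.\ in Prop. \ref{cohomology}).
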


\begin{proof}
This is an immediate application of Cor \ref{primo} and Prop. \ref{LpStokes}, \ref{Lpdiver}, \ref{cohomology}.
\end{proof}

\begin{cor}
\label{terzo}
In the setting of Th. \ref{qpara}.
\begin{enumerate}
\item For any $p'\leq q\leq s\leq\infty$ and  $k=1,2,...,m$ we have 
$$\im\left(\overline{H}^{2k}_{s,\max}(M,g)\rightarrow\overline{H}^{2k}_{q,\max}(M,g)\right) \neq \{0\}.$$
\item  For any $1\leq q\leq p$ and  $k=1,2,...,m$ we have 
$$\im\left(\overline{H}^{2k}_{\infty}(M,g)\rightarrow\overline{H}^{2k}_{q,\min}(M,g)\right) \neq \{0\}.$$
This in turn implies that for any $1\leq q\leq s\leq p$ and  $k=1,2,...,m$ we have$$\im\left(\overline{H}^{2k}_{s,\min}(M,g)\rightarrow\overline{H}^{2k}_{q,\min}(M,g)\right) \neq \{0\}.$$
\item Assume that $p\geq 2$. For any $p'\leq q\leq p$ and  $k=1,2,...,m$ we have 
$$\im\left(\overline{H}^{2k}_{q,\min}(M,g)\rightarrow\overline{H}^{2k}_{q,\max}(M,g)\right) \neq \{0\}.$$
\end{enumerate}
\end{cor}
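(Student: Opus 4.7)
The plan is, for each of the three assertions, to exhibit the class $[\omega^k]$ in the source cohomology, verify that the relevant comparison map is well-defined on it, and identify its image with a class whose non-vanishing is already known from Theorem \ref{qpara}. As preliminaries, note that $\omega\in L^\infty\Omega^2(M,g)$ by Prop. \ref{limitato}, so the finite volume hypothesis yields $\omega^k\in L^s\Omega^{2k}(M,g)\cap \ker(d_{2k,\max,s})$ for every $s\in[1,\infty]$. Moreover, since $(M,g)$ is $p$-parabolic with finite volume, a H\"older estimate on a $p$-parabolic exhaustion $\{\phi_n\}$ gives $\|d_0\phi_n\|_{L^q\Omega^1(M,g)}\leq \vol_g(M)^{1/q-1/p}\|d_0\phi_n\|_{L^p\Omega^1(M,g)}\to 0$ for every $q\in[1,p]$, so $(M,g)$ is $q$-parabolic throughout that range.

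For (1), Theorem \ref{qpara}(1) already provides $[\omega^k]\neq 0$ in $\overline{H}^{2k}_{q,\max}(M,g)$ for each $q\in[p',\infty]$. The inclusion map $i_*:\overline{H}^{2k}_{s,\max}(M,g)\to \overline{H}^{2k}_{q,\max}(M,g)$ furnished by \eqref{corno} sends $[\omega^k]_s$ to $[\omega^k]_q\neq 0$, which settles the first claim. For (2), I would apply Prop. \ref{messico} to $\omega^k\in L^\infty\Omega^{2k}(M,g)\cap \mathcal{D}(d_{2k,\max,q})$, together with the $q$-parabolicity established above, to deduce $\omega^k\in \mathcal{D}(d_{2k,\min,q})$ for each $q\in[1,p]$. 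To justify the map $\overline{H}^{2k}_{\infty,\max}(M,g)\to \overline{H}^{2k}_{q,\min}(M,g)$, one checks that if $\alpha\in \mathcal{D}(d_{2k-1,\max,\infty})$, then $\alpha$, viewed in $L^q$ via the finite-volume inclusion, lies in $\mathcal{D}(d_{2k-1,\min,q})$ by a second application of Prop. \ref{messico}; hence representatives of trivial $L^\infty$-max classes become min-exact in $L^q$, and limits of exact forms likewise pass to elements of $\overline{\im(d_{2k-1,\min,q})}$, making the map well-defined on reduced cohomology. Its image contains $[\omega^k]_{q,\min}$, which is non-zero by Theorem \ref{qpara}(2). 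The subsequent factorization through $\overline{H}^{2k}_{s,\min}(M,g)$ for $q\leq s\leq p$ then drops out from \eqref{delcatria} together with the $s$-parabolicity of $(M,g)$.

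For (3), the hypothesis $p\geq 2$ gives $p'\leq p$, so the interval $[p',p]$ is non-empty. For $q$ in this interval, Theorem \ref{qpara}(2) produces a non-zero class $[\omega^k]$ in $\overline{H}^{2k}_{q,\min}(M,g)$, and Theorem \ref{qpara}(1) gives a non-zero class $[\omega^k]$ in $\overline{H}^{2k}_{q,\max}(M,g)$. The natural comparison map $\overline{H}^{2k}_{q,\min}(M,g)\to \overline{H}^{2k}_{q,\max}(M,g)$ induced by the inclusions $\mathcal{D}(d_{2k-1,\min,q})\subset \mathcal{D}(d_{2k-1,\max,q})$ and $\ker(d_{2k,\min,q})\subset \ker(d_{2k,\max,q})$ carries the former class to the latter, so its image is non-trivial. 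The main obstacle I expect is the well-definedness of the map $\overline{H}^{2k}_{\infty,\max}\to \overline{H}^{2k}_{q,\min}$ in (2), as it crosses both the $L^\infty$-to-$L^q$ and the max-to-min boundaries simultaneously; the key tool resolving this is Prop. \ref{messico}, which precisely embeds bounded elements of the max domain inside the min domain under the relevant parabolicity assumption.
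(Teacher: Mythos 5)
Your proposal is correct and follows essentially the same route as the paper: point (1) via the non-vanishing of $[\omega^k]$ in $\overline{H}^{2k}_{q,\max}(M,g)$ from Th. \ref{qpara} and the natural maps of \eqref{corno}, point (2) via Prop. \ref{messico} together with the finite-volume inclusion to obtain the map $\overline{H}^{2k}_{\infty}(M,g)\rightarrow\overline{H}^{2k}_{q,\min}(M,g)$, and point (3) via the two non-vanishing statements of Th. \ref{qpara} and the natural min-to-max comparison map. Your extra care about $q$-parabolicity for $q\in[1,p]$ and about well-definedness on reduced cohomology just spells out what the paper leaves implicit.
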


\begin{proof}
The first property follows by arguing as in Cor. \ref{primo}. Concerning the second point we first note that, thanks to Prop. \ref{messico} and the fact that $\vol_g(M)<\infty$, we have a continuous inclusion $\mathcal{D}(d_{k,\infty})\hookrightarrow \mathcal{D}(d_{k,z,\min})$ for any $k=0,1,...,2m$ and $z\in [1,\infty)$ where each domain is endowed with the corresponding graph norm. As previously recalled this inclusion induces a natural map at the level of cohomology groups $\overline{H}^{2k}_{\infty}(M,g)\rightarrow\overline{H}^{2k}_{q,\min}(M,g)$. Since we have shown that $0\neq [\omega^k]\in \overline{H}^{2k}_{\infty}(M,g)$ for any  $k=1,2,...,m$ and $0\neq [\omega^k]\in \overline{H}^{2k}_{q,\min}(M,g)$ for any $q\in [1,p]$ and $k=1,2,...,m$ we can conclude that $\im(\overline{H}^{2k}_{\infty}(M,g)\rightarrow\overline{H}^{2k}_{q,\min}(M,g)) \neq \{0\}$ for any $1\leq q\leq p$ and  $k=1,2,...,m$. The same argument leads to the conclusion that  for any $k=1,2,...,m$ we have  $\im(\overline{H}^{2k}_{s,\min}(M,g)\rightarrow\overline{H}^{2k}_{q,\min}(M,g)) \neq \{0\}$ for each $1\leq q\leq s\leq p$. Finally in the case $p\geq 2$, as we know that both $0\neq [\omega^k]\in \overline{H}^{2k}_{q,\min}(M,g)$ and $0\neq [\omega^k]\in \overline{H}^{2k}_{q,\max}(M,g)$ for any $k=1,2,...,m$ and  $p'\leq q\leq p$, we can conclude that
$\im(\overline{H}^{2k}_{q,\min}(M,g)\rightarrow\overline{H}^{2k}_{q,\max}(M,g)) \neq \{0\}$ for each $p'\leq q\leq p$ and $k=1,2,...,m$.
\end{proof}

We have also the following corollaries whose proofs are omitted because straightforward.

\begin{cor}
\label{quarto}
In the setting of Prop. \ref{Sobolev}. Then Cor. \ref{terzo} holds true for $(M,g)$.
\end{cor}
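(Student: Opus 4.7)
The plan is essentially a one-line reduction to the previous corollary, so let me just be explicit about what needs to be checked. The hypothesis of Proposition \ref{Sobolev} is that $(M,\omega,J,g)$ is a possibly incomplete almost K\"ahler manifold of finite volume and dimension $2m>0$ with $W^{1,p}(M,g)=W^{1,p}_0(M,g)$ for some $1<p<\infty$. Corollary \ref{terzo} was stated in the setting of Theorem \ref{qpara}, whose standing hypotheses are the same as those of Proposition \ref{Sobolev} except that $p$-parabolicity is assumed instead of the Sobolev equality $W^{1,p}(M,g)=W^{1,p}_0(M,g)$.

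Thus the only thing to check is that, under the hypotheses of Proposition \ref{Sobolev}, the manifold $(M,g)$ is $p$-parabolic. But this is precisely the content of the second part of Proposition \ref{paraSob}: for a Riemannian manifold of finite volume, $p$-parabolicity is equivalent to the equality $W^{1,p}(M,g)=W^{1,p}_0(M,g)$. Since $(M,g)$ has finite volume by assumption, we can apply this equivalence and conclude that $(M,g)$ is $p$-parabolic.

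With $p$-parabolicity established, the manifold $(M,\omega,J,g)$ satisfies all the hypotheses of Theorem \ref{qpara}, and therefore all three conclusions of Corollary \ref{terzo} hold verbatim. There is no real obstacle here: the corollary is flagged as having its proof omitted because straightforward, and indeed the argument is a single invocation of Proposition \ref{paraSob} followed by a direct appeal to Corollary \ref{terzo}. The only bookkeeping issue is to make sure that the Sobolev hypothesis is combined with the finite volume assumption (one needs both directions of the equivalence in Proposition \ref{paraSob}, or rather the ``only if'' direction under the finite volume assumption) before citing Corollary \ref{terzo}.
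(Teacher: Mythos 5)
Your proof is correct and is precisely the intended "straightforward" argument the paper omits: under the finite volume hypothesis, Prop.~\ref{paraSob}(2) upgrades the Sobolev equality $W^{1,p}(M,g)=W^{1,p}_0(M,g)$ to $p$-parabolicity, after which Th.~\ref{qpara} and hence Cor.~\ref{terzo} apply verbatim. You also correctly flag the one subtlety, namely that the direction of the equivalence you need requires $\vol_g(M)<\infty$.
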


\begin{cor}
\label{quinto}
In the setting of Prop. \ref{stoc}. Then  Cor. \ref{terzo} holds true for $(M,g)$ with $p=p'=2$. 
\end{cor}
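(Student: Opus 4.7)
The plan is a two-step reduction: verify that the hypotheses of Theorem \ref{qpara} are satisfied with $p=2$, and then quote Corollary \ref{terzo} directly. Since the quantifier ranges of Corollary \ref{terzo} specialize nicely when $p=p'$, nothing more than this reduction is needed.

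First I would observe that, as already used in the proof of Proposition \ref{stoc}, stochastic completeness of $(M,g)$ implies $W^{1,2}_0(M,g)=W^{1,2}(M,g)$ (by \cite{GrMa} Th. 1.7), and under the standing assumption $\vol_g(M)<\infty$ this Sobolev equality is in fact equivalent to stochastic completeness (see \cite{BeGu} Prop. 2.6 or \cite{masa}). Applying the finite-volume half of Proposition \ref{paraSob}, this equality is itself equivalent to $(M,g)$ being $2$-parabolic. So $(M,g)$ meets the hypotheses of Theorem \ref{qpara} with $p=2$, which automatically gives $p'=p/(p-1)=2$.

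Next I would invoke Corollary \ref{terzo} with this value of $p$. With $p=p'=2$ the three quantifier ranges $[p',\infty]$, $[1,p]$, and $[p',p]$ collapse to $[2,\infty]$, $[1,2]$, and the single point $\{2\}$. The assumption $p\geq 2$ needed for the third item of Corollary \ref{terzo} is automatic, so we obtain, for each $k=1,2,\dots,m$: non-vanishing of $\im\bigl(\overline{H}^{2k}_{s,\max}(M,g)\to\overline{H}^{2k}_{q,\max}(M,g)\bigr)$ for $2\leq q\leq s\leq\infty$; non-vanishing of $\im\bigl(\overline{H}^{2k}_{\infty}(M,g)\to\overline{H}^{2k}_{q,\min}(M,g)\bigr)$, and hence of $\im\bigl(\overline{H}^{2k}_{s,\min}(M,g)\to\overline{H}^{2k}_{q,\min}(M,g)\bigr)$, for $1\leq q\leq s\leq 2$; and non-vanishing of $\im\bigl(\overline{H}^{2k}_{2,\min}(M,g)\to\overline{H}^{2k}_{2,\max}(M,g)\bigr)$.

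There is no genuine obstacle here: the only non-formal input is the equivalence between stochastic completeness and $2$-parabolicity in the finite volume regime, and this is precisely the step already carried out inside the proof of Proposition \ref{stoc}. For this reason the corollary is a direct formal consequence of Corollary \ref{terzo} applied with $p=2$, which is why the author marks the proof as straightforward and omits it.
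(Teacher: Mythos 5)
Your reduction is correct and is exactly the argument the paper intends (and omits as straightforward): stochastic completeness plus finite volume gives $W^{1,2}(M,g)=W^{1,2}_0(M,g)$, hence $2$-parabolicity via Prop. \ref{paraSob}, so Th. \ref{qpara} and therefore Cor. \ref{terzo} apply with $p=p'=2$, the hypothesis $p\geq 2$ of the third item being automatic. Nothing further is needed.
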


\begin{rem}
Thanks to \eqref{surj1} and \eqref{surj2} we know that whenever the reduced $L^p$-maximal/minimal cohomology is not trivial then also the $L^p$-maximal/minimal cohomology is not trivial.
\end{rem}

\section{Examples and applications}
In this section we explore various examples and applications of the previous results. We start with the case of complete almost K\"ahler manifolds with finite volume. First we need to recall the following important property:
\begin{prop}
\label{Lpmaxmin}
Let $(M,g)$ be a complete Riemannian  manifold of dimension $m$. Then for any $p\in[1,\infty)$ and $k=0,1,...,m$, the following two operators 
$$d_{k,\max,p}:L^p\Omega^k(M,g)\rightarrow L^p\Omega^{k+1}(M,g)\ \mathrm{and}\ d_{k,\min,p}:L^p\Omega^k(M,g)\rightarrow L^p\Omega^{k+1}(M,g)$$ coincide.
\end{prop}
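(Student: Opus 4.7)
The plan is to show that an arbitrary $\omega \in \mathcal{D}(d_{k,\max,p})$ can be approximated in the graph norm by forms with compact support, and then invoke Prop. \ref{olimpia} to conclude that $\omega \in \mathcal{D}(d_{k,\min,p})$. The approximating sequence will be obtained by multiplying $\omega$ by suitable Lipschitz cutoffs $\chi_n$ that exhaust $M$, and the derivative computation will be handled via Prop. \ref{tecprop} applied with $\gamma = \chi_n$ viewed as a bounded $0$-form with bounded distributional differential.

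First I would construct the cutoffs using completeness. Fix a basepoint $x_0 \in M$ and let $\rho(x) := d_g(x,x_0)$ be the Riemannian distance. By Hopf--Rinow, closed metric balls are compact, and $\rho$ is $1$-Lipschitz, hence differentiable almost everywhere with $|d_0\rho|_g \le 1$. Choose a smooth function $\phi:\mathbb{R}\to[0,1]$ with $\phi(t)=1$ for $t\le 1$, $\phi(t)=0$ for $t\ge 2$, and $|\phi'|\le 2$, and set $\chi_n(x) := \phi(\rho(x)/n)$. Then $\chi_n$ is Lipschitz with compact support, $0\le \chi_n \le 1$, $\chi_n\to 1$ pointwise on $M$, and $|d_0\chi_n|_g \le 2/n$ almost everywhere. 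In particular $\chi_n \in \mathcal{D}(d_{0,\max,\infty})$.

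Next I would apply Prop. \ref{tecprop} (first point) with $r=0$ and $\gamma = \chi_n$: this yields $\chi_n\wedge \omega = \chi_n\omega \in \mathcal{D}(d_{k,\max,p})$ together with
\[
d_{k,\max,p}(\chi_n\omega) \;=\; d_0\chi_n\wedge \omega \;+\; \chi_n\, d_{k,\max,p}\omega.
\]
Since $\chi_n\omega$ vanishes outside the compact set $\{\rho\le 2n\}$, Prop. \ref{olimpia} gives $\chi_n\omega \in \mathcal{D}(d_{k,\min,p})$. Thus it remains only to verify that $\chi_n\omega \to \omega$ in the graph norm of $d_{k,\max,p}$, for then $\omega$ is a graph-norm limit of elements of $\mathcal{D}(d_{k,\min,p})$, which is closed, so $\omega \in \mathcal{D}(d_{k,\min,p})$.

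The core of the argument is the graph-norm convergence, and this is the step that uses completeness through the bound on $|d_0\chi_n|_g$. The convergence $\chi_n\omega\to \omega$ in $L^p\Omega^k(M,g)$ and $\chi_n d_{k,\max,p}\omega \to d_{k,\max,p}\omega$ in $L^p\Omega^{k+1}(M,g)$ both follow from dominated convergence, using $0\le \chi_n\le 1$ and $\chi_n\to 1$ pointwise, with dominators $|\omega|_g$ and $|d_{k,\max,p}\omega|_g$ respectively. The main obstacle is the boundary-type term $d_0\chi_n\wedge \omega$; here the bound $|d_0\chi_n|_g \le 2/n$ yields
\[
\|d_0\chi_n\wedge \omega\|_{L^p\Omega^{k+1}(M,g)}^{\,p} \;\le\; \Bigl(\tfrac{2}{n}\Bigr)^{p}\!\int_{M}|\omega|_g^{\,p}\,\dvol_g \;=\; \Bigl(\tfrac{2}{n}\Bigr)^{p}\|\omega\|_{L^p\Omega^k(M,g)}^{\,p} \;\longrightarrow\; 0,
\]
which closes the argument. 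Note that it is precisely the compactness of closed balls (i.e.\ completeness) that both makes the supports of $\chi_n$ compact and legitimises the use of the distance function as a globally defined Lipschitz exhaustion; in an incomplete setting one cannot in general produce such a sequence, which is consistent with the failure of the $L^p$-Stokes theorem in the incomplete examples cited after Def. 1.1.
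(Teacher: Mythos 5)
Your argument is correct and is essentially the paper's own proof: multiply $\omega$ by compactly supported cutoffs with uniformly small differentials, apply Prop. \ref{tecprop} for the Leibniz rule and Prop. \ref{olimpia} for the compactly supported products, and pass to the limit in the graph norm using dominated convergence and the closedness of $d_{k,\min,p}$. The only (harmless) difference is that you build Lipschitz cutoffs $\chi_n=\phi(\rho/n)$ directly from the distance function, noting $\chi_n\in\mathcal{D}(d_{0,\max,\infty})$, whereas the paper invokes the standard fact (cf. \cite{Dema}, Lemma 12.1) that completeness yields smooth compactly supported $\phi_n$ with $\|d_0\phi_n\|_{L^\infty\Omega^1(M,g)}\rightarrow 0$.
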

\begin{proof}
This is a well known result whose proof, in the case $p=2$, goes back to Gaffney \cite{Gaf}. For the sake of completeness we recall here a proof. It is a well known fact that completeness is equivalent to the existence of a sequence of smooth functions with compact support $\{\phi_n\}\subset C^{\infty}_c(M)$ such that:\\
a) $0\leq \phi_n\leq 1$;\\
b) $\{A_n\}$ is an exhaustion of $M$ made by open subsets with compact closure, where $A_n$ is the interior of $\supp(\phi_n)$, the support of $\phi_n$;\\
c) $\lim d_0\phi_n=0$ in $L^{\infty}\Omega^1(M,g)$ as $n\rightarrow \infty$.\\ For a proof we refer for instance to \cite{Dema} Lemma 12.1 pag. 57. Consider now any form $\eta\in \mathcal{D}(d_{k,\max,p})$. Then, thanks to Prop. \ref{tecprop}, we know that $\phi_n\eta\in \mathcal{D}(d_{k,\max,p})$ and $d_{k,\max,p}\phi_n\eta=d_0\phi_n\wedge \eta+(-1)^k\phi_nd_{k,\max,p}\eta$. By virtue of Lebesgue's dominate convergence theorem, the inequality $\|d_0\phi_n\wedge \eta\|_{L^p\Omega^{k+1}(M,g)}\leq \|d_0\phi_n\|_{L^{\infty}\Omega^1(M,g)}\|\eta\|_{L^p\Omega^{k}(M,g)}$ and the fact that $\lim d_0\phi_n=0$ in $L^{\infty}\Omega^1(M,g)$ as $n\rightarrow \infty$ it is easy to verify that $\phi_n\eta\rightarrow \eta$ in $\mathcal{D}(d_{k,\max,p})$ with respect to the corresponding graph norm as $n\rightarrow \infty$. Finally, as $\phi_n\eta$ has compact support, we can use Prop. \ref{olimpia} to deduce that $\{\phi_n\eta\}\subset \mathcal{D}(d_{k,\min,p})$ which in turn implies that $\eta\in \mathcal{D}(d_{k,\min,p})$ too.
\end{proof}

We can summarize the above proposition by saying that on a complete Riemannian manifold of dimension $m$ the {\em $L^p$-Stokes theorem holds true} for any $p\in [1,\infty)$ and any $k=0,1,2,...,m.$ Clearly the equality $d_{k,\max,p}=d_{k,\min,p}$ implies immediately that the (reduced) $L^p$-maximal cohomology and  (reduced) $L^p$-minimal cohomology coincide, that is $\overline{H}^k_{p,\max}(M,g)=\overline{H}^{k}_{p,\min}(M,g)$. Henceforth we will simply label with $\overline{H}^k_{p}(M,g)$ the (reduced) $L^p$-cohomology of a complete Riemannian manifold $(M,g)$.

\begin{prop}
\label{comple}
Let $(M,\omega,J,g)$ be a complete almost K\"ahler manifold of finite volume. Then $\omega^k$ induces a non-trivial class in both $\overline{H}^{2k}_{\infty,\max}(M,g)$ and $\overline{H}^{2k}_{p}(M,g)$ for any $k=1,2,...,m$ and $p\in[1,\infty)$. Moreover Corollaries  \ref{primo}--\ref{quinto} hold true for $(M,g)$.
\end{prop}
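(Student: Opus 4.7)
The plan is to reduce the entire statement to an application of Proposition \ref{Lpmaxmin} together with the machinery already built up in Section 2.2.

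First I would observe that Proposition \ref{Lpmaxmin} tells us that on the complete manifold $(M,g)$ the $L^p$-Stokes theorem holds at every degree and for every $p\in[1,\infty)$, i.e.\ $d_{k,\max,p}=d_{k,\min,p}$. In particular the hypothesis of Proposition \ref{LpStokes} is satisfied at the top degree $2m-1$, \emph{for every} $p\in[1,\infty)$. Applying that proposition with $p=1$ (recall $\vol_g(M)<\infty$) we conclude that $[\omega^k]\neq 0$ in $\overline{H}^{2k}_{q,\max}(M,g)$ for every $k=1,\dots,m$ and every $q\in[1,\infty]$. Since Proposition \ref{Lpmaxmin} also yields $\overline{H}^{2k}_{p,\max}(M,g)=\overline{H}^{2k}_{p,\min}(M,g)=:\overline{H}^{2k}_p(M,g)$ for $p\in[1,\infty)$, the same statement automatically covers the minimal cohomology in the range $p\in[1,\infty)$. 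The $L^\infty$ case is the $q=\infty$ endpoint above. This handles the two non-vanishing claims.

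For the corollaries part I would next verify that $(M,g)$ is $p$-parabolic for every $p\in[1,\infty)$. Take the sequence $\{\phi_n\}\subset C^\infty_c(M)$ produced in the proof of Proposition \ref{Lpmaxmin}: it satisfies $0\le\phi_n\le 1$, $\phi_n\to 1$ a.e., and $\|d_0\phi_n\|_{L^\infty\Omega^1(M,g)}\to 0$. Using the finite volume hypothesis I would estimate
\begin{equation*}
\|d_0\phi_n\|_{L^p\Omega^1(M,g)}^p\;\le\;\|d_0\phi_n\|_{L^\infty\Omega^1(M,g)}^p\,\vol_g(M)\longrightarrow 0,
\end{equation*}
which establishes $p$-parabolicity for every $p\in[1,\infty)$. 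With this in hand the hypotheses of Proposition \ref{LpStokes}, Proposition \ref{Lpdiver} (via the Hodge-$*$ argument given in its proof), Theorem \ref{qpara}, Proposition \ref{Sobolev} (by Proposition \ref{paraSob}), and Proposition \ref{stoc} (stochastic completeness follows from $2$-parabolicity on a finite-volume manifold) are all in force for any exponent $p\in(1,\infty)$. Consequently Corollaries \ref{primo}, \ref{secondo}, \ref{terzo}, \ref{quarto} and \ref{quinto} all apply verbatim to $(M,g)$.

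There is essentially no genuine obstacle here: the proposition is a clean harvest of the general theory once one knows that completeness plus finite volume yields the $L^p$-Stokes theorem in all degrees and $p$-parabolicity for every finite $p$. The only small point to be careful about is the $L^\infty$ endpoint: it is obtained as the extreme case $q=\infty$ in Theorem \ref{calamari} (used through Proposition \ref{LpStokes}), not from the $p$-parabolicity argument, since the definition of $p$-parabolicity requires $p<\infty$.
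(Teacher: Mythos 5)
Your proof is correct and follows essentially the same route as the paper: the non-vanishing claims are obtained from Prop. \ref{Lpmaxmin} combined with Prop. \ref{LpStokes}, together with the identification $\overline{H}^{2k}_{p,\max}(M,g)=\overline{H}^{2k}_{p,\min}(M,g)=\overline{H}^{2k}_{p}(M,g)$ in the complete case, which is exactly the paper's argument. One small caveat on the corollaries part: the properties of the cutoffs $\{\phi_n\}$ recorded in the proof of Prop. \ref{Lpmaxmin} do not literally include $\phi_n\to 1$ almost everywhere (for instance $\tfrac{1}{2}\phi_n$ would satisfy the listed properties as well), so to get $p$-parabolicity it is safer to note that completeness gives $d_{0,\max,p}=d_{0,\min,p}$, i.e. $W^{1,p}(M,g)=W^{1,p}_0(M,g)$, and then invoke Prop. \ref{paraSob}(2) together with $\vol_g(M)<\infty$; with that adjustment your verification that Corollaries \ref{primo}--\ref{quinto} apply goes through.
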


\begin{proof}
Thanks to Prop. \ref{Lpmaxmin} we know that on $(M,g)$ the $L^p$-Stokes theorem holds true for any $p\in [1,\infty)$ and any $k=0,1,2,...,2m$. Now, thanks to Prop. \ref{LpStokes}, we can conclude that $\omega^k$ induces a non-trivial class in both $\overline{H}^{2k}_{\infty,\max}(M,g)$ and $\overline{H}^{2k}_{p}(M,g)$ for any $k=1,2,...,m$ and $p\in[1,\infty)$. 
\end{proof}

In order to provide some examples to Prop. \ref{comple} we recall now  the definition of various {\em complete non-compact K\"ahler manifolds with finite volume} that already appeared in the literature. First we recall some notions that will be needed to describe these examples.\\  

{\em Complex spaces} are a classical topic in complex geometry and we refer to \cite{GeFi} and to \cite{GrRe} for definitions and properties. Here we recall only that a reduced complex space is {\em irreducible} if and only if $\reg(X)$ is connected. A reduced and paracompact complex space $X$ is said  \emph{Hermitian} if the regular part  $\reg(X):=X\setminus \sing(X)$ carries a Hermitian metric $h$ such that for every point $x\in X$ there exists an open neighborhood $U\ni p$ in $X$, a proper holomorphic embedding of $U$ into a polydisc $\phi: U \rightarrow \mathbb{D}^N\subset \mathbb{C}^N$ and a Hermitian metric $\beta$ on $\mathbb{D}^N$ such that $(\phi|_{\reg(U)})^*\beta=h$. If $d\omega=0$, that is the fundamental form of $h$ is closed, then $(X,h)$ will be called a {\em K\"ahler space}. Important examples of K\"ahler spaces are given by {\em complex projective varieties} $V\subset \mathbb{C}\mathbb{P}^n$ with the K\"ahler metric on $\reg(V)$ induced by the Fubini-Study metric of $\mathbb{C}\mathbb{P}^n$.  More generally, given a compact K\"ahler manifold $(M,g)$, any analytic subvariety $X\subset M$ whose regular part carries the K\"ahler metric induced by $g$ is a compact K\"ahler space. We refer to \cite{ToH} for more details.
In addition we recall that two Riemannian metrics $g_1$ and $g_2$ over a manifold $M$ are said {\em quasi-isometric} if there exist positive constants $a$ and $b$ such that $ag_1\leq g_2\leq b g_1$.\\

\textbf{Poincar\'e-type K\"ahler metrics.} Let $(M,h)$ be a compact K\"ahler manifold  with fundamental form $\omega$. Let $D$ be a normal crossing divisor. Let $L_D$ be the line bundle on $M$ associated to $D$.  Let us label by $s:M\rightarrow L_D$ a global holomorphic section whose  associated divisor $(s)$ equals $D$. Let $\rho$ be any Hermitian metric on $L_D$ such that $\|s\|_{\rho}$, the norm of $s$ with respect to $\rho$, satisfies $\|s\|_{\rho}<1$. A K\"ahler  metric $g$ on $M\setminus D$ which is quasi-isometric to a K\"ahler metric with fundamental $(1,1)$-form $$b\omega-\frac{\sqrt{-1}}{2\pi}\partial\overline{\partial}\log(\log\|s\|_{\rho}^2)^2$$ for $b$ a positive integer, will be called a {\em Poincar\'e-type metric}. Endowed with $g$, $(M\setminus D)$ becomes a complete non-compact K\"ahler  manifold with finite volume. We refer to \cite{CoGr},  \cite{GMMI} and  \cite{SZu} for more on these metrics.\\

\textbf{Saper-type K\"ahler metrics}. 
 Let $X$ be a singular subvariety of a compact K\"ahler manifold $(M,h)$. Let $\omega$ be the fundamental $(1,1)$-form of $(M,h)$. Let $\pi:N\rightarrow M$ be a holomorphic map from a compact complex manifold $N$ to $M$ whose exceptional set $D$ is a divisor with normal crossings in $N$ and such that the restriction $$\pi|_{N\setminus D}:N\setminus D \longrightarrow M\setminus \sing(X)$$ is a biholomorphism. Let $L_D$ be the line bundle on $N$ associated to $D$. Let $s:N\rightarrow L_D$ be a global holomorphic section whose  associated divisor $(s)$ equals $D$. Let $\tau$ be any Hermitian metric on $L_D$ such that $\|s\|_{\tau}$, the norm of $s$ with respect to $\tau$, satisfies $\|s\|_{\tau}<1$.  A K\"ahler  metric $g_S$ on $N\setminus D$ which is quasi-isometric to a K\"ahler metric with fundamental $(1,1)$-form $$l\pi^*\omega-\frac{\sqrt{-1}}{2\pi}\partial\overline{\partial}\log(\log\|s\|_{\tau}^2)^2$$ for $l$ a positive integer, will be called a {\em Saper-type K\"ahler metric}, distinguished with respect to the map $\pi$. The corresponding K\"ahler metric on $M\setminus {\sing{X}}\cong N\setminus D$ and its restriction to $X\setminus \sing{X}$ is also called Saper-type K\"ahler metric. Endowed with $g_S$ the complex manifolds $N\setminus D$, $M\setminus \sing(X)$ and $\reg(X)$ become complete non-compact K\"ahler manifolds of finite volume. These metrics were introduced by Saper in \cite{LeSa} in the setting of complex projective varieties  with isolated singularities. Their construction was later generalized  by Grant-Melles and Milman in \cite{GMMI} and \cite{GMMIL} to the case of an arbitrary subvariety of a compact K\"ahler manifold. The above definition is taken from \cite{GMMIL}.\\

\textbf{Grauert-type K\"ahler metrics}. Let $(X,h)$ be a K\"ahler space and let $\omega$ be the fundamental form of $h$.  A {\em Grauert-type K\"ahler metric} is a K\"ahler metric on $\reg(X)$ whose fundamental form is given by $\omega+i\partial\overline{\partial}f$, where $f:X\rightarrow \mathbb{R}$ is  a {\em Grauert potential}.  Equipped with $\omega+i\partial\overline{\partial}f$, $\reg(X)$ becomes a complete non-compact K\"ahler manifold with finite volume. We refer to \cite{ToH} for more details and existence results.\\

\textbf{Bergman metric}. This is the K\"ahler metric on the regular part of the Baily-Borel-Satake compactification of a quotient like $\Gamma\setminus D$ where $D=G/K$ is required to be a bounded symmetric domain, $G$ is the set of real points of a semi-simple algebraic group defined over $\mathbb{Q}$, $K$ a maximal compact subgroup and $\Gamma\subset G$ is an arithmetic subgroup acting freely on $D$. We refer to \cite{SaSt} and the references therein for more on this topic.\\
 
We give now some examples of {\em incomplete  K\"ahler/almost K\"ahler  metrics} fulfilling some of the assumptions used in the previous sections.\\

\textbf{Compact K\"ahler spaces}. As recalled above a K\"ahler space $(X,h)$ is a Hermitian space such that $\omega$, the fundamental form associated to $h$, satisfies $d\omega=0$.  Compact K\"ahler spaces (and more generally compact Hermitian spaces) have finite volume and are $q$-parabolic for any $q\in [1,2]$, see \cite{BeGu} and \cite{JRU}. Therefore we can conclude that Th. \ref{qpara}, Prop. \ref{Sobolev}, \ref{stoc} and the corresponding corollaries hold for $(X,h)$.\\

\textbf{Almost K\"ahler pseudometrics}. Let $(M,J)$ be a compact almost complex manifold. Let $g$ be a section of $T^*M\otimes T^*M\rightarrow M$ such that $g$ is symmetric, non-negative, compatible with $J$ and $g|_A$ is strictly positive, where $A$ is an open and dense subset of $M$. Clearly $A$ has finite volume with respect to $g$. This follows easily by the fact that $M$ is compact and $\dvol_g\leq c\dvol_h$ where $h$ is any Hermitian metric on $M$ and $c>0$ is a suitable constant.
Furthermore assume that $\omega(\ ,\ ):=g(J\ ,\ )$ is closed and that $M\setminus A\subset \cup_{i=1}^s N_i$, where each $N_i$ is a compact submanifold of $M$ satisfying $\text{cod}(N_i)\geq 2$ for every $i=1,...,s$ and where $\text{cod}(S_i)$ is the codimension of $S_i$. Then, according to \cite{BeGu} Prop. 4.7, we know that $(A,g|_A)$ is $q$-parabolic for any $q\in [1,2]$. Hence we can conclude that  Th. \ref{qpara}, Prop. \ref{Sobolev}, \ref{stoc} and the corresponding corollaries hold for $(A,g|_A)$.\\
 An important family of  examples belonging to the above setting can be constructed as follows: Let $(X,h)$ be a compact K\"ahler space of complex dimension $m$. Let $\pi:M\rightarrow X$ be a resolution of $X$, that is $M$ is a compact complex manifold, $\pi:M\rightarrow X$ is holomorphic and surjective, $\pi^{-1}(\sing(X))=D$ is a normal crossing divisor of $M$ and $\pi|_{M\setminus D}:M\setminus D\rightarrow \reg(X)$ is a biholomorphism, see \cite{Hiro}. We recall that a divisor with only normal crossings is a divisor of the form $D=\sum_{i=1}^r V_i$ where $V_i$ are distinct irreducible smooth analytic hypersurfaces of $M$ and $D$ is defined in a neighborhood of any point by an equation in local analytic coordinates of the type $z_1 \cdot \cdot \cdot z_k = 0$, $1\leq k \leq m$. Finally, if we define $g:=\pi^*h$, it is easy to check that  $g$ is a symmetric non-negative section of $T^*M\otimes T^*M\rightarrow M$ that is  compatible with $J$, the complex structure of $M$, and such that  $g|_{(M\setminus D)}$ is strictly positive. In other word $g$ is a {\em K\"ahler pseudometric} on $M$.\\

Now we continue by showing some topological applications of our results. 
In the next proposition the first two points  are  a particular case of a more general non-vanishing theorem that  can be proved as a consequence of the Hard Lefschetz theorem and the decomposition theorem, see e.g. \cite{KiWoo} and the references therein. Here we provide a new proof  based on the results of the previous section. First we need to introduce some notations. In the sequel  $H^{p,q}_{c,\overline{\partial}}(\reg(X))$ will denote the Dolbeault cohomology of $\reg(X)$ with compact support while $H^{p,q}_{2, \overline{\partial}_{\max}}(\reg(X),h)$ will denote the $L^2$-maximal Dolbeault cohomology of $(\reg(X),h)$. Clearly the latter is the cohomology of the $L^2$-maximal Dolbeault complex whose definition is omitted since it is completely analogous to that of the $L^2$-maximal de  Rham complex. We refer to \cite{bovo} for more details.

\begin{prop}
\label{nonvan}
Let $(X,h)$ be a compact and irreducible K\"ahler space of complex dimension $m>0$. Assume that $\dim(\sing(X))=0$. Then:
\begin{enumerate}
\item If $m$ is odd we have $H^{2k}(\reg(X),\mathbb{R})\neq 0$ and $H^{2k}_c(\reg(X),\mathbb{R})\neq 0$ for each $k=0,...,[m/2]$ and $k=[m/2]+1,...,m$, respectively.
\item If $m$ is even we have $H^{2k}(\reg(X),\mathbb{R})\neq 0$ for each $k=0,...,(m/2)-1$, $\im(H^m_c(\reg(X),\mathbb{R})\rightarrow H^m(\reg(X),\mathbb{R}))\neq \{0\}$ for $2k=m$ and  $H^{2k}_c(\reg(X),\mathbb{R})\neq 0$ for each  $k=(m/2)+1,...,m$. 
\item $H^{p,p}_{\overline{\partial}}(\reg(X))\neq 0$ and $H^{p,p}_{c,\overline{\partial}}(\reg(X))\neq 0$ whenever $2p<m-1$ and $2p>m+1$, respectively.
\end{enumerate}
\end{prop}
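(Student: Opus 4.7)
The plan is to invoke Theorem \ref{qpara} on the open manifold $(\reg(X), h)$ and then translate the $L^2$-cohomology non-vanishing into statements about de Rham cohomology, using the isolated-singularity structure. Since $(X,h)$ is a compact K\"ahler space, the example on compact K\"ahler spaces recalled above implies that $(\reg(X), h)$ has finite volume and is $p$-parabolic for every $p \in [1,2]$. Applying Theorem \ref{qpara} with $p = p' = 2$ yields
$$[\omega^k] \neq 0 \text{ in } \overline{H}^{2k}_{2,\max}(\reg(X),h) \text{ and in } \overline{H}^{2k}_{2,\min}(\reg(X),h)$$
for every $k = 1, \ldots, m$. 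The case $k=0$ of the first de Rham assertion is immediate: $\reg(X)$ is connected because $X$ is irreducible and $\sing(X)$ is $0$-dimensional, so $H^0(\reg(X),\mathbb{R}) = \mathbb{R}$.

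For the range $2k \leq m-1$ I would establish $H^{2k}(\reg(X),\mathbb{R}) \neq \{0\}$ by contradiction: assume $\omega^k = d\eta$ for some $\eta \in \Omega^{2k-1}(\reg(X))$ and deduce that $\omega^k$ is also $L^2$-max exact, contradicting the first paragraph. Using the $p$-parabolicity cutoffs $\{\phi_n\}$, the truncations $\phi_n \eta$ are Lipschitz with compact support in $\reg(X)$, and Propositions \ref{tecprop} and \ref{olimpia} give $\phi_n \eta \in \mathcal{D}(d_{2k-1,2,\min}) \subset \mathcal{D}(d_{2k-1,2,\max})$ with
$$d_{2k-1,2,\max}(\phi_n \eta) = d\phi_n \wedge \eta + \phi_n \omega^k.$$
Dominated convergence delivers $\phi_n \omega^k \to \omega^k$ in $L^2$. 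The technical crux is $\|d\phi_n \wedge \eta\|_{L^2} \to 0$; this is where the degree bound is used, via a local analysis near each isolated singular point $p_i$: the cone-like model of $(\reg(X),h)$ near $p_i$ together with the restriction $2k - 1 < m - 1$ should allow one to replace $\eta$ by a modified primitive $\eta - \alpha$, with $\alpha$ smooth and closed on $\reg(X)$, such that $\eta - \alpha$ is bounded on a punctured neighborhood of each $p_i$. H\"older's inequality then gives
$$\|d\phi_n \wedge (\eta - \alpha)\|_{L^2} \leq \|d\phi_n\|_{L^2}\,\|\eta - \alpha\|_{L^\infty} \longrightarrow 0,$$
producing the required contradiction.

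The complementary range $2k \geq m+1$ follows from Poincar\'e duality for the oriented open $2m$-manifold $\reg(X)$,
$$H^{2k}_c(\reg(X),\mathbb{R}) \cong H^{2m-2k}(\reg(X),\mathbb{R})^{*},$$
since $2(m-k) \leq m-1$ falls in the range handled above (the extreme case $k = m$ being covered by $H^0(\reg(X),\mathbb{R}) = \mathbb{R}$). For the middle degree $2k = m$, arising only when $m$ is even, one exploits both the $\max$ and $\min$ non-vanishing of $[\omega^{m/2}]$: the former rules out smooth exactness exactly as in the second paragraph, giving $0 \neq [\omega^{m/2}] \in H^m(\reg(X),\mathbb{R})$, while the latter lets one modify $\omega^{m/2}$ by a smooth exact term to produce a compactly supported closed smooth representative, witnessing $[\omega^{m/2}] \in \im(H^m_c \to H^m) \setminus \{0\}$. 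The Dolbeault assertion (part 3) is treated by running the same parabolicity/cutoff argument on the $L^2$-maximal and minimal $\overline{\partial}$-complexes on $(\reg(X),h)$ (cf.\ \cite{bovo}); the finer bidegree is responsible for the tighter ranges $2p < m-1$ and $2p > m+1$, since one now needs both the primitive of $\omega^p$ for $\overline{\partial}$ and the auxiliary wedge factor $\omega^{m-p}$ to be controlled in their respective bidegrees.

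The main obstacle is the technical step in the second paragraph: producing the bounded modification $\eta - \alpha$ near each isolated singular point from an arbitrary smooth primitive $\eta$, via the local cone model and the strict degree bound $2k < m$. Once that is in hand, the remainder is a clean combination of the $p$-parabolicity machinery of the previous section, Propositions \ref{tecprop}--\ref{messico}, and Poincar\'e duality on the oriented open manifold $\reg(X)$.
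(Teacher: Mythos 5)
Your overall strategy (parabolicity of $(\reg(X),h)$, Theorem \ref{qpara} with $p=2$, then translation into topological statements, with Poincar\'e duality handling the range $2k\geq m+1$) is in the right spirit, but the step you yourself flag as the ``technical crux'' is a genuine gap, and it is exactly the point where the paper instead invokes deep known results. To pass from ``$\omega^k=d\eta$ for some smooth $\eta$ on $\reg(X)$'' to ``$\omega^k$ lies in $\overline{\im(d_{2k-1,\min,2})}$'' you need a primitive that is bounded (or at least $L^2$) near the singular points, and your proposed construction of the correction $\alpha$ rests on an unproven local claim: that the induced metric near an isolated singular point admits a ``cone-like model'' in which any closed smooth form of the relevant degree can be corrected by a closed form to a bounded one. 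The metric $h$ near a singular point of a K\"ahler space is \emph{not} in general quasi-isometric to a Riemannian cone, and even granting some cone structure, the bounded-primitive statement in the range $2k-1<m-1$ is essentially a local $L^2$/$L^\infty$-cohomology computation of Cheeger--Ohsawa type --- i.e.\ it is the hard content, not a routine consequence of the degree bound. The paper avoids this entirely by quoting the comparison isomorphism \eqref{filoraso} (Ohsawa, Saper: $H^k_{2,\max}(\reg(X),h)\cong H^k(\reg(X),\mathbb{R})$ for $k\leq m-1$, $\cong\im(H^m_c\rightarrow H^m)$ for $k=m$, $\cong H^k_c$ for $k\geq m+1$), so that Theorem \ref{qpara} immediately yields points 1 and 2; your argument would have to reprove a piece of that theorem, which you do not do. Relatedly, your middle-degree claim that non-vanishing in reduced $L^2$-minimal cohomology ``lets one modify $\omega^{m/2}$ by a smooth exact term to produce a compactly supported closed representative'' is asserted, not proved, and in any case the non-triviality of the resulting class in $H^m(\reg(X),\mathbb{R})$ again relies on the gap step.

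Point 3 has the same structural problem. The paper first shows $0\neq[\omega^k]\in H^{k,k}_{2,\overline{\partial}_{\max}}(\reg(X),h)$ by a careful argument (density of smooth forms in the graph norm of $\overline{\partial}_{\max}$, the bidegree observation that $\partial$ kills $(m,m-1)$-forms so that $\overline{\partial}$-exactness becomes $d$-exactness after wedging with $\omega^{m-k}$, then $d_{2m-1,\max}=d_{2m-1,\min}$ from parabolicity and Prop.\ \ref{littletool}), and then converts this into statements about $H^{p,p}_{\overline{\partial}}(\reg(X))$ and $H^{p,p}_{c,\overline{\partial}}(\reg(X))$ via Ohsawa's comparison theorems (Th.\ 2.30 and 2.31 of \cite{ToH}), which is where the ranges $2p<m-1$ and $2p>m+1$ come from. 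Your proposal to ``run the same parabolicity/cutoff argument on the $\overline{\partial}$-complexes'' neither supplies these comparison isomorphisms nor any substitute for them, so even granting an $L^2$-Dolbeault non-vanishing statement you have no bridge to the (compactly supported) Dolbeault cohomology appearing in the statement.
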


\begin{proof}
According to \cite{TOh}, \cite{ToH} and  \cite{LeSa} we know that 
\begin{equation}
\label{filoraso}
H^{k}_{2,\max}(\reg(X),h)\cong \left\{
\begin{array}{ll}
H^k(\reg(X),\mathbb{R})\ &\ k\leq m-1\\
\im(H_c^{m}(\reg(X),\mathbb{R})\rightarrow H^k(\reg(X),\mathbb{R}))\ & k=m\\
H_c^k(\reg(X),\mathbb{R})\ &\ k\geq m+1
\end{array}
\right.
\end{equation}
As previously remarked we know that $(\reg(X),h)$ is $q$-parabolic for each $q\in [1,2]$ and with finite volume. Thus we can apply Th. \ref{qpara} in order to conclude that $H^{2k}_{2,\max}(\reg(X),h)\neq \{0\}$ for each $k=1,...,m$. Moreover $H^{0}_{2,\max}(\reg(X),h)\neq \{0\}$ too as $H^{0}_{2,\max}(\reg(X),h)=\ker(d_{0,\max})=\mathbb{R}$, as a consequence of the fact that $(\reg(X),h)$ is connected and with finite volume. This together with \eqref{filoraso} shows that the first two properties of the above proposition hold true. Concerning the third point we first claim that 
\begin{itemize}
\item $\omega^k$ induces a non trivial class in $H^{k,k}_{2,\overline{\partial}_{\max}}(\reg(X),h)$ for each $k=1,...,m$.
\end{itemize}
By contrast let us assume the existence of an integer $k\in \{1,2,...,m\}$ and a $(k,k-1)$-form $\eta\in \mathcal{D}(\overline{\partial}_{k,k-1,\max})$ such that $\overline{\partial}_{k,k-1,\max}\eta=\omega^k$. As $\Omega^{k,k-1}(\reg(X))\cap \mathcal{D}(\overline{\partial}_{k,k-1,\max})$ is dense in $\mathcal{D}(\overline{\partial}_{k,k-1,\max})$ with respect to the corresponding graph norm, see for instance \cite{BL} pag. 98, we can deduce the existence of a sequence of smooth forms $\{\eta_n\}\subset \Omega^{k,k-1}(\reg(X))\cap \mathcal{D}(\overline{\partial}_{k,k-1,\max})$ such that $$\lim_{n\rightarrow \infty}\overline{\partial}_{k,k-1}\eta_n=\omega^k$$ in $L^2\Omega^{k,k}(\reg(X),h)$. 
Let us consider  now the sequence $\{\eta_n\wedge \omega^{m-k}\}$. Clearly $$\{\eta_n\wedge \omega^{m-k}\}\subset \Omega^{2m-1}(\reg(X))\cap L^2\Omega^{2m-1}(\reg(X),h)$$ as $\eta_n\in L^2\Omega^{k,k-1}(\reg(X),h)\cap \Omega^{k,k-1}(\reg(X))$ and $\omega^{m-k}\in$  $L^{\infty}\Omega^{m-k,m-k}(\reg(X),h)\cap \Omega^{m-k,m-k}(\reg(X))$. Concerning $\overline{\partial}_{2m,2m-1}(\eta_n\wedge \omega^{m-k})$ we have $$\overline{\partial}_{2m,2m-1}(\eta_n\wedge \omega^{m-k})=(\overline{\partial}_{k,k-1}\eta_n)\wedge \omega^{m-k}$$ which certainly  lies in $L^2\Omega^{m,m}(\reg(X),h)\cap \Omega^{m,m}(\reg(X))$ since $\overline{\partial}_{k,k-1}\eta_n\in L^2\Omega^{k,k}(\reg(X),h)\cap \Omega^{k,k}(\reg(X))$ and  
$\omega^{m-k}\in \Omega^{m-k,m-k}(\reg(X))\cap L^{\infty}\Omega^{m-k,m-k}(\reg(X),h)$. In particular $$\lim_{n\rightarrow \infty}\overline{\partial}_{m,m-1}(\eta_n\wedge \omega^{m-k})=\omega^m$$ in $L^2\Omega^{m,m}(\reg(X),h)$.
Moreover $\partial_{m,m-1}(\eta_n\wedge \omega^{m-k})=0$ as $\eta_n\wedge \omega^{m-k}\in \Omega^{m,m-1}(\reg(X))$. This in turn tells us that $$d_{2m-1}(\eta_n\wedge\omega^{m-k})=\overline{\partial}_{m,m-1}(\eta_n\wedge \omega^{m-k})\in L^2\Omega^{m,m}(\reg(X),h).$$ Therefore $\{\eta_n\wedge \omega^{m-k}\}\subset L^2\Omega^{2m-1}(\reg(X),h)\cap \Omega^{2m-1}(\reg(X))$, $\{d_{2m-1}(\eta_n\wedge \omega^{m-k})\}\subset L^2\Omega^{2m}(\reg(X),h)\cap \Omega^{2m}(\reg(X))$, that is $\{\eta_n\wedge \omega^{m-k}\}\in \mathcal{D}(d_{2m-1,\max})$. Furthermore we know that $(\reg(X),h)$ is parabolic and so, as explained in the proof of Th. \ref{qpara}, we have $d_{0,\max}=d_{0,\min}$. Taking the adjoint and composing with the Hodge star operator we reach the conclusion that $d_{2m-1,\min}=d_{2m-1,\max}$. Eventually we showed that $\{\eta_n\wedge \omega^{m-k}\}\in \mathcal{D}(d_{2m-1,\min})$ and $$\lim_{n\rightarrow \infty}d_{2m-1}(\eta_n\wedge \omega^{m-k})=\omega^m.$$ As $\vol_h(\reg(X))<\infty$ we can now use Prop. \ref{littletool} to conclude that $\vol_h(\reg(X))=0$, which is clearly absurd. Hence we showed that $\omega^k$ induces a non trivial class in $H^{k,k}_{2,\overline{\partial}_{\max}}(\reg(X),h)$ for each $k=1,...,m$. The conclusion now follows by the fact that $H^{p,q}_{2,\overline{\partial}_{\max}}(\reg(X),h)\cong H^{p,q}_{\overline{\partial}}(\reg(X))$ for $p+q<m-1$ and $H^{p,q}_{2,\overline{\partial}_{\max}}(\reg(X),h)\cong H^{p,q}_{c,\overline{\partial}}(\reg(X))$ for $p+q>m-1$, see \cite{ToH} Th. 2.30 and Th. 2.31.
\end{proof}

Finally we come to the main topological application. In order to prove the next theorem we need to introduce the following variant of the $L^{\infty}$-cohomology. Let $(M,g)$ be an arbitrary Riemannian manifold.  Let us define $$\mathrm{C}\Omega^k_{\infty}(M,g):=\{ \omega\in \mathcal{D}(d_{k,\max,\infty})\cap \mathrm{C}\Omega^k(M)\ \text{such that}\ d_{k,\max,\infty}\omega \in \text{C}\Omega^{k+1}(M)\}$$ where $\mathrm{C}\Omega^k(M)$ and $\mathrm{C}\Omega^{k+1}(M)$ are the spaces of continuous $k$-form and $(k+1)$-forms over $M$, respectively. Consider now the complex $$...\stackrel{d_{k-1,\max,\infty}}{\rightarrow}\mathrm{C}\Omega^k_{\infty}(M,g)\stackrel{d_{k,\max,\infty}}{\rightarrow}\mathrm{C}\Omega^{k+1}_{\infty}(M,g)\stackrel{d_{k+1,\max,\infty}}{\rightarrow}$$ and let us label by $H^k_{\infty,\max,\text{c}}(M,g)$ the cohomology of the above complex. We have now all the ingredients for the next

\begin{teo}
\label{singhom}
Let $(X,h)$ be a compact and irreducible K\"ahler space of complex dimension $m>0$. Assume that every point $p\in \sing(X)$ has a local base of open neighborhoods whose regular parts are connected. Then  $$H^{2k}(X,\mathbb{R})\neq  \{0\}$$ for each $k=0,1...,m$.
\end{teo}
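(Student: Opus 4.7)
The plan is to combine the non-vanishing results from Section~2, applied to the compact K\"ahler space case discussed in the Examples, with a sheaf-theoretic de~Rham identification for the continuous $L^\infty$-cohomology complex $H^{\bullet}_{\infty,\max,\mathrm{c}}$ introduced just before the theorem.

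\smallskip
\noindent\textbf{Step 1 (non-vanishing in the reduced $L^\infty$-maximal cohomology).}
As recalled in the paragraph on compact K\"ahler spaces, $(\reg(X),h)$ has finite volume and is $q$-parabolic for every $q\in[1,2]$; in particular it is $2$-parabolic. Applying Theorem~\ref{qpara} with $p=2$ (so that $p'=2$), I conclude that, for every $k=1,\dots,m$, the class $[\omega^k]$ is non-trivial in $\overline{H}^{2k}_{q,\max}(\reg(X),h)$ for every $q\in[2,\infty]$. Taking $q=\infty$ yields $[\omega^k]\neq 0$ in $\overline{H}^{2k}_{\infty,\max}(\reg(X),h)$.

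\smallskip
\noindent\textbf{Step 2 (promotion to $H^{2k}_{\infty,\max,\mathrm{c}}$).}
By Proposition~\ref{limitato} the form $\omega$ is smooth and bounded on $\reg(X)$, and so are its powers $\omega^k$; moreover $d\omega^k=0$. Thus $\omega^k\in \mathrm{C}\Omega^{2k}_{\infty}(\reg(X),h)\cap \ker(d_{2k,\max,\infty})$, and defines a class $[\omega^k]_{\mathrm{c}}\in H^{2k}_{\infty,\max,\mathrm{c}}(\reg(X),h)$. The inclusion of complexes $\mathrm{C}\Omega^{\bullet}_{\infty}\hookrightarrow \mathcal{D}(d_{\bullet,\max,\infty})$ composed with the canonical surjection \eqref{surj2} gives a natural map
$$
H^{2k}_{\infty,\max,\mathrm{c}}(\reg(X),h)\longrightarrow \overline{H}^{2k}_{\infty,\max}(\reg(X),h)
$$
sending $[\omega^k]_{\mathrm{c}}$ to the class $[\omega^k]$ shown to be non-zero in Step~1. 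Hence $[\omega^k]_{\mathrm{c}}\neq 0$ for $k=1,\dots,m$; the case $k=0$ is immediate since the constant function $1$ is closed, continuous, bounded, and cannot be $d$-exact.

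\smallskip
\noindent\textbf{Step 3 (de~Rham-type identification with $H^{\bullet}(X,\mathbb{R})$).}
The decisive point is to prove
\begin{equation*}
H^{\bullet}_{\infty,\max,\mathrm{c}}(\reg(X),h)\;\cong\; H^{\bullet}(X,\mathbb{R}).
\end{equation*}
For this I would introduce, on $X$, the sheaf $\mathcal{C}^k_{\infty}$ defined by
$$
U\longmapsto \mathrm{C}\Omega^k_{\infty}\bigl(U\cap\reg(X),\,h|_{U\cap\reg(X)}\bigr),
$$
equipped with the exterior differential, and verify that $(\mathcal{C}^{\bullet}_{\infty},d)$ is a fine resolution of the constant sheaf $\mathbb{R}_X$. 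Its global sections compute exactly $H^{\bullet}_{\infty,\max,\mathrm{c}}(\reg(X),h)$. Fineness follows from smooth partitions of unity on $X$ (restricted to $\reg(X)$, they preserve continuity and boundedness of forms \emph{and} of their distributional differentials, by Proposition~\ref{tecprop}). Exactness at regular points is the classical Poincar\'e lemma. At a singular point $p$, the hypothesis that $p$ admits a local base of open neighborhoods $U$ with $U\cap\reg(X)$ connected is exactly what forces $\ker(d^0)_p=\mathbb{R}$: a bounded continuous function on $U\cap\reg(X)$ with vanishing distributional differential is locally constant, hence constant. For $k\geq 1$, combining local contractibility of complex analytic germs with the connectedness assumption, a Poincar\'e-lemma construction (integration along a continuous retraction onto a base point in $\reg(X)$) produces primitives that remain continuous and bounded.

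\smallskip
\noindent\textbf{Conclusion and main obstacle.}
Once Step~3 is established, the non-vanishing from Step~2 transfers to $H^{2k}(X,\mathbb{R})\neq\{0\}$ for $k=0,\dots,m$, which is the content of the theorem. The genuine difficulty, and the only place where new analytic-geometric work is required beyond Section~2, is the construction of the fine resolution in Step~3: verifying the higher-degree Poincar\'e lemma for the \emph{continuous and bounded} de~Rham complex on a punctured neighborhood of a singularity, while producing primitives that remain continuous and $L^\infty$-bounded. The connectedness hypothesis is the minimal geometric input that makes the degree-zero stalk computation correct; in higher degrees it must be used in tandem with the cone/contractibility structure of analytic germs to control the constructed primitives.
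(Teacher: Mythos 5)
Your Steps 1 and 2 are exactly the paper's strategy and are correct: $(\reg(X),h)$ has finite volume and is $2$-parabolic, Theorem \ref{qpara} with $p=p'=2$ gives $[\omega^k]\neq 0$ in $\overline{H}^{2k}_{\infty,\max}(\reg(X),h)$, and since $\omega^k\notin\overline{\im(d_{2k-1,\max,\infty})}$ its class in $H^{2k}_{\infty,\max,\mathrm{c}}(\reg(X),h)$ is a fortiori non-zero. The problem is Step 3, which you yourself flag as the decisive point: you propose to prove the local exactness of the continuous bounded de Rham complex at singular points by ``integration along a continuous retraction onto a base point in $\reg(X)$,'' but this is not an argument, and it is precisely where the difficulty lives. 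Boundedness (in the metric $h$) of a primitive produced by a cone-type homotopy is not automatic -- the homotopy operator involves the differential of the retraction, and near a singularity its Lipschitz behaviour is exactly what can destroy the $L^\infty$ bound. Indeed, in general the $L^\infty$-cohomology of a singular space computes an intersection cohomology rather than ordinary cohomology, so a naive Poincar\'e lemma in all degrees cannot hold without specific geometric input. The paper closes this gap by locally embedding $U$ properly and holomorphically into a polydisc with $h$ induced by an ambient Hermitian metric (this is built into the definition of a Hermitian/K\"ahler space) and then invoking Valette's theorem (\cite{Val}, Th. 4.2.1) to obtain exactness of the sheaf complex in positive degrees, while the connectedness hypothesis on local regular parts is used only to identify the degree-zero kernel with the constant sheaf $\mathbb{R}_X$ (which you did get right).

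There are also three smaller points you gloss over which the paper treats explicitly. First, the assignment $U\mapsto \mathrm{C}\Omega^k_{\infty}(U\cap\reg(X),h)$ is only a presheaf (an $L^\infty$ condition is not local), so one must sheafify, and then use compactness of $X$ to see that global sections of the sheafification are again the globally bounded continuous forms, so that the complex of global sections really computes $H^{\bullet}_{\infty,\max,\mathrm{c}}(\reg(X),h)$. Second, ``smooth partitions of unity on $X$'' is not meaningful on a singular space; what is needed (and cited from \cite{bovo}) is a partition of unity with bounded differential, obtained by restricting ambient cutoffs through the local embeddings. Third, the identification of the cohomology of $\mathbb{R}_X$ with singular cohomology $H^{\bullet}(X,\mathbb{R})$ requires local contractibility of $X$, which the paper gets from Lojasiewicz's triangulation theorem \cite{Loj}. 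None of these is fatal, but the higher-degree Poincar\'e lemma is a genuine missing ingredient in your write-up: without an appeal to a result like Valette's (or an equivalent quantitative construction of bounded primitives), the resolution claim, and hence the theorem, is not established.
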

\begin{proof}

In order to prove the above theorem we need to show the following property that we believe to have an independent interest.
\begin{itemize}
\item Let $(X,h)$ be as above. Then $H^k_{\infty, \max, \text{c}}(\reg(X),h)\cong H^k(X,\mathbb{R})$ for any $k=0,1,...,2m$.
\end{itemize}
The above claim is essentially a consequence of the main result proved in \cite{Val}. As the setting in \cite{Val} is slightly different than ours we give here all the details. Thanks to the triangulation theorem of Lojasiewicz, see \cite{Loj}, we know that $X$ is locally contractible. This in turn implies, see \cite{Rama} or \cite{Sella}, that the cohomology of $X$ with coefficients in the constant sheaf $\mathbb{R}_{X}$ and the singular cohomology of $X$ with coefficients in $\mathbb{R}$ are isomorphic. Hence during the remaining part of the proof we will use the notation $H^k(X,\mathbb{R})$ for both the singular cohomology of $X$ with coefficients in $\mathbb{R}$ and the cohomology of $X$ with coefficients in the constant sheaf $\mathbb{R}_{X}$. Consider now the presheaf  on $X$ defined by the assignment $$X\supset U\mapsto \{ \text{C}\Omega^k_{\infty}(\reg(U),h|_{\reg(U)})\}$$ where $U$ is any open subset of $X$ and $k=0,...,2m$. In other words this is the presheaf that assigns to every open subset $U\subset X$ the space of continuous $k$-forms lying in $L^{\infty}\Omega^k(\reg(U),h|_{\reg(U)})$ whose image under the distributional action of $d_k$ lies in $\text{C}\Omega^{k+1}(\reg(U))\cap L^{\infty}\Omega^{k+1}(\reg(U),h|_{\reg(U)})$. Let us label by $\mathcal{C}^{\infty}_h\Omega^k$ the corresponding sheaf arising by sheafification. The sections of $\mathcal{C}^{\infty}_h\Omega^k$ over an open subset $U\subset X$ are given by 
\begin{multline}
\label{sheafifi}
\mathcal{C}^{\infty}_h\Omega^k(U)=\{s\in \text{C}\Omega^k(\reg(U))\cap L^{\infty}_{\mathrm{loc}}\Omega^{k}(\reg(U),h|_{\reg(U)})\ \text{such that for each }\ p\in U\ \text{there exists an}\\ \text{ open}\ \text{neighborhood}\ W\ \text{with}\ p\in W\subset\ U\ \text{such that}\ s|_{\reg(W)}\in  \text{C}\Omega_{\infty}^k(\reg(W),h|_{\reg(W)})\}.
\end{multline}
Clearly the differential $d_k$ induces a sheaves morphism that for simplicity we still label by $d_k:\mathcal{C}^{\infty}_h\Omega^k\rightarrow \mathcal{C}^{\infty}_h\Omega^{k+1}$ and that obeys $d_{k+1}\circ d_k=0$. We claim now that the following complex of sheaves over $X$
\begin{equation}
\label{resolution}
0\rightarrow\mathcal{C}^{\infty}_h\Omega^0\stackrel{d_0}{\rightarrow}...\stackrel{d_{k-1}}{\rightarrow}\mathcal{C}^{\infty}_h\Omega^k\stackrel{d_{k+1}}{\rightarrow}...\stackrel{d_{2m-1}}{\rightarrow}\mathcal{C}^{\infty}_h\Omega^{2m}\rightarrow 0
\end{equation} 
is a fine resolution of the constant sheaf $\mathbb{R}_{X}$. By the assumptions we know that  every point $p\in \sing(X)$ has a local base of open neighborhoods whose regular parts are connected. Now it is immediate to verify  that $\ker(\mathcal{C}^{\infty}_h\Omega^0\stackrel{d_0}{\rightarrow}\mathcal{C}^{\infty}_h\Omega^1)$ gives the constant sheaf $\mathbb{R}_{X}$. Moreover the existence of a partition of unity with bounded differential, see \cite{bovo}, assures that the complex \eqref{resolution} is made by fine sheaves. Concerning the exactness of \eqref{resolution}, by the definition of Hermitian complex space, we know that for any $x\in X$ there exists an open neighborhood $U\ni p$ in $X$, a proper holomorphic embedding of $U$ into a polydisc $\phi: U \rightarrow \mathbb{D}^N\subset \mathbb{C}^N$ and a Hermitian metric $\beta$ on $\mathbb{D}^N$ such that $(\phi|_{\reg(U)})^*\beta=h$. Hence we are in position to apply \cite{Val} Th. 4.2.1 to deduce that \eqref{resolution} is an exact sequence of sheaves. As $X$ is compact we have $\mathcal{C}^{\infty}_h\Omega^k(X)=\text{C}\Omega^k_{\infty}(\reg(X),h)$, that is the space of global sections of $\mathcal{C}^{\infty}_h\Omega^k$ equals the space of continuous $k$-forms lying in $L^{\infty}\Omega^k(\reg(X),h)$ whose image under the distributional action of $d_k$ lies in $\text{C}\Omega^{k+1}(\reg(X))\cap L^{\infty}\Omega^{k+1}(\reg(X),h)$. We can thus conclude that the cohomology of the complex given by the global sections: $$0\rightarrow\mathcal{C}^{\infty}_h\Omega^0(X)\stackrel{d_0}{\rightarrow}...\stackrel{d_{k-1}}{\rightarrow}\mathcal{C}^{\infty}_h\Omega^k(X)\stackrel{d_{k+1}}{\rightarrow}...\stackrel{d_{2m-1}}{\rightarrow}\mathcal{C}^{\infty}_h\Omega^{2m}(X)\rightarrow 0$$ coincides with $H^k_{\infty,\max,\text{c}}(\reg(X),h)$ for each $k=0,...,2m$. So we proved that $$H^k(X,\mathbb{R})\cong H^k_{\infty, \max,\text{c}}(\reg(X),h).$$
Clearly $\omega^k$ induces a class in $H^{2k}_{\infty, \max,\text{c}}(\reg(X),h)$ for any $k=1,...,m$. Moreover $0\neq [\omega^k]\in H^{2k}_{\infty, \max,\text{c}}(\reg(X),h)$ because, thanks to Th. \ref{qpara}, we know that $\omega^k\notin \overline{\im(d_{2k-1,\max,\infty})}$. Finally $H^0_{\infty, \max,\text{c}}(\reg(X),h)\neq \{0\}$ because $H^0_{\infty, \max,\text{c}}(\reg(X),h)=C(\reg(X))\cap\ker(d_{0,\max,\infty})=\mathbb{R}$. We can thus conclude that $H^{2k}(X,\mathbb{R})\neq \{0\}$ for each $k=0,1...,m$ as desired. 
\end{proof}

We have now the last result of the paper. In order to state it we recall the notion of {\em normal complex space}.  Let $X$ be a reduced complex space. The sheaf $\widetilde{\mathcal{O}}_X$ of weakly holomorphic functions is the sheaf that assigns to every open subset $U\subset X$ the space of  holomorphic functions $f:U\setminus (U\cap \sing(X))\rightarrow \mathbb{C}$ such that for each point $p\in U$ there exists an open neighborhood $p\in V\subset U$ such that $f|_{V\setminus (\sing(X)\cap V)}$ is bounded. The space $X$ is called normal if $\widetilde{\mathcal{O}}_{X,p}=\mathcal{O}_{X,p}$ for every $p\in X$. See \cite{GeFi} for more details.

\begin{cor}
\label{singhomx}
Let $(X,h)$ be a compact and irreducible normal K\"ahler space of complex dimension $m$. Then $$H^{2k}(X,\mathbb{R})\neq  \{0\}$$ for each $k=0,1,...,m$.
\end{cor}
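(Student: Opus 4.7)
The plan is to derive Corollary \ref{singhomx} from Theorem \ref{ssinghom} (i.e.\ Theorem \ref{singhom} as restated in the final section) by verifying that the extra topological hypothesis on neighborhoods of singular points is automatic in the normal case. So the only thing that needs to be checked is the following: if $(X,h)$ is a compact, irreducible, normal K\"ahler space, then every $p\in \sing(X)$ admits a local base of open neighborhoods $\{U_\alpha\}$ such that $\reg(U_\alpha)=U_\alpha\setminus\sing(X)$ is connected.

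First I would recall the standard fact that a normal complex space is \emph{locally irreducible}, i.e.\ for each $p\in X$ the local ring $\mathcal{O}_{X,p}$ is an integral domain. Indeed, normality means $\widetilde{\mathcal{O}}_{X,p}=\mathcal{O}_{X,p}$, and since $\widetilde{\mathcal{O}}_{X,p}$ is a finite direct product of integral domains indexed by the branches of $X$ at $p$ (one factor per analytic irreducible component of $X$ through $p$), the equality with the local ring $\mathcal{O}_{X,p}$ forces the number of branches to be one. Consequently $X$ has a unique analytic irreducible component through $p$.

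Next I would invoke the standard consequence of local irreducibility at $p$: there exists a fundamental system of open neighborhoods $U$ of $p$ such that $U$ is irreducible (hence $\reg(U)$ is connected). Concretely, one can take representatives $U$ of the germ at $p$ small enough that the irreducible analytic decomposition of $U$ reduces to $U$ itself; this is proved, e.g., in Grauert--Remmert's treatment of local analytic geometry (\cite{GrRe}) and is precisely the statement that the irreducible components at $p$ correspond to the connected components of $\reg(U)$ for $U$ sufficiently small. Since $\reg$ of an irreducible complex space is always connected (it is open, dense, and its complement has real codimension at least two in each local model, together with the fact that the regular locus of an irreducible reduced complex space is connected), we conclude that $\reg(U)$ is connected for every sufficiently small $U$ in the family.

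With this topological hypothesis verified, we are exactly in the situation of Theorem \ref{ssinghom}, applied to the compact irreducible K\"ahler space $(X,h)$. That theorem yields $H^{2k}(X,\mathbb{R})\neq \{0\}$ for every $k=0,1,\dots,m$, completing the proof. The only conceptual step is the local-irreducibility consequence of normality; everything else is a direct appeal to Theorem \ref{ssinghom}.
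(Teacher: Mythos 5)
Your proposal is correct and follows essentially the same route as the paper: the paper's proof simply observes that normality guarantees, at every point, a local base of neighborhoods with connected regular parts (which you justify in detail via the absence of nontrivial idempotents in the local ring, i.e.\ local irreducibility, plus the equivalence between irreducibility and connectedness of the regular locus) and then invokes Theorem \ref{singhom}. Your write-up merely fills in the "it is easy to see" step of the paper with the standard argument, so there is nothing to correct.
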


\begin{proof}
It is easy to see that if $X$ is normal then for each point $p\in X$ there exists a local base of open neighborhoods whose regular parts are connected. Now the above corollary follows by Th. \ref{singhom}.
\end{proof}

\end{document}